\let\ORIlabel\label
\let\ORIrefstepcounter\refstepcounter
   \let\label\ORIlabel 
   \let\refstepcounter\ORIrefstepcounter}
\newcommand{\R}{\mathbb{R}}
\newcommand{\tr}{\text{tr}}
\newcommand{\Rad}{\mathcal{R}}
\newcommand{\Tang}{\mathcal{T}}
\newcommand{\Rset}{\mathcal{S}_R}
\newcommand{\Rreg}{\mathcal{U}_R}
\newcommand{\Aset}{\mathcal{S}_C}
\newcommand{\Areg}{\mathcal{U}_C}
\newcommand{\Remark}{\textit{Remark}.~}
\newcommand{\Mod}[1]{\ (\mathrm{mod}\ #1)} 
\newtheorem{theo}{Theorem}[section]
\newtheorem{prop}[theo]{Proposition}
\newtheorem{defn}[theo]{Definition}
\newtheorem{cor}[theo]{Corollary}
\newtheorem{lem}[theo]{Lemma}
\newtheorem{notn}[theo]{Notation}
\title{A radial and tangential framework for studying transient reactivity in two-dimensional systems}
\author{James Broda$^1$ \and Alanna Haslam-Hyde$^{2,*}$ \and Mary Lou Zeeman$^3$}
\begin{document}
%%%%%%%%%%%%%%%%%%%%%%%%%%%%%%%%%%%%%%%%%%
\maketitle
%%%%%%%%%%%%%%%%%%%%%%%%%%%%%%%%%%%%
\textit{{\small \centering 
	$^1$Washington and Lee University, Department of Mathematics\\
	$^2$Boston University, Department of Mathematics and Statistics\\
	$^3$Bowdoin College, Department of Mathematics\\
	$^*$Corresponding Author: \href{mailto:alanna.haslam@gordon.edu}{alanna.haslam-hyde@gordon.edu}\\[1em]
}}

\begin{abstract}
    Even if a linear system of ordinary differential equations has a globally attracting equilibrium at the origin, small disturbances from the equilibrium may lead to large transient excursions before the system stabilizes. 
This counter-intuitive phenomenon of transient amplification is called {\it reactivity} and is often associated with systems that are non-normal.
Here, we establish a new framework for analyzing reactivity and transient dynamics in two-dimensional linear ODEs.  
Our work is facilitated by decomposing the corresponding vector field into sinusoidal {\it radial} and {\it tangential} components. 
Using this decomposition, we introduce a structure of {\it orthovectors} and {\it orthovalues} as dual to the eigenstructure.
Since diagonalization masks transient reactivity, we combine the eigenstructure and the orthostructure to propose alternative matrix forms  which capture both transient and asymptotic behavior and which highlight reactivity features more directly.
Leveraging these matrix forms,
we analytically quantify the maximal amplification in globally attracting systems, and we provide new insight into how 
a nonautonomous linear system can be unstable, even when all the frozen-time systems are stable.

\end{abstract}

%%%%%%%%%%%%%%%%%%%%%%%%%%%%%%%%%%%%
\section{Introduction to reactivity}\label{sec:Intro}
It is a surprising, albeit well-known, fact that, even in a linear system of ordinary differential equations with a global attractor at the origin, trajectories may
take an excursion away from the origin before eventually converging there \cite{neubert1997alternatives}. See Figure \ref{fig:bullseye} and the example therein. 
No nonlinearity or degeneracy is needed in the system for this transient amplification of perturbations from the equilibrium to occur. 
%%%%%%%%%%%%%%%%%%%%%%%%%
\begin{figure}
    \centering
    \includegraphics[width=\textwidth]{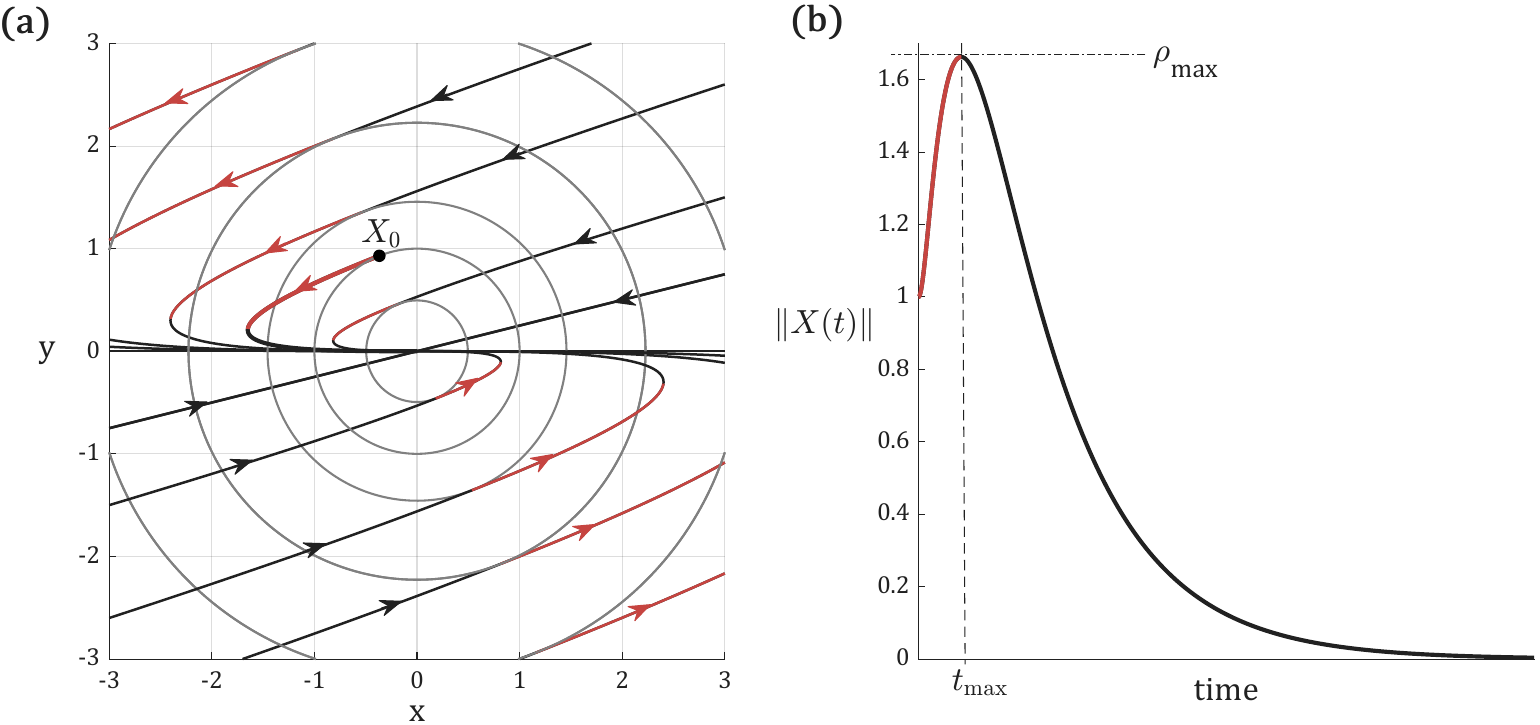}
    \caption{
    \textbf{(a)} The phase portrait of linear ODE \eqref{eqn:ODE-n} with coefficient matrix $\textstyle A=\left(\begin{smallmatrix} -1 & -8\\ 0 & -3\end{smallmatrix}\right)$. 
    The concentric circles indicate contours of equal magnitude. The origin is globally attracting, but many trajectories transiently increase their magnitude (highlighted in red) before converging.
    The solution trajectory, $X(t)$, with initial condition 
    $X_0\approx (-0.4,0.9)$
    on the unit circle is selected to illustrate maximal amplification. \textbf{(b)} The magnitude of $X(t)$ plotted versus time, with the 
    transient amplification highlighted in red. Maximal amplification $\rho_{\max} \approx 1.67$ is attained at $t=t_{\max}$.
    }
    \label{fig:bullseye}
\end{figure}
%%%%%%%%%%%%%%%%%%%%%%%%%
\par
%%%%%%%%%%%%%%%%%%%%%%%%%
In their landmark paper, Neubert and Caswell \cite{neubert1997alternatives} coined the term {\it reactivity} to capture the maximal rate of radial amplification of perturbations as follows:
%%%%%%%%%%%%%%%%%%%%%%%%%
\begin{defn} 
\label{def:reactivity}
Given a real $n \times n$ matrix $A$, consider the linear system
\begin{equation}
\label{eqn:ODE-n}
\frac{dX}{dt}=AX. 
\end{equation}
For any initial condition $X_0 \in \R^n$, let $X(t)$ be the solution of 
the system with $X(0)=X_0$, let $r(t)=\|X(t)\|$, and let $r_0=\|X_0\|$.
The \textup{reactivity} of System \eqref{eqn:ODE-n} is given by 
\begin{equation}
\label{eqn:reactivity}
\rho = \max_{X_0 \neq 0} \left( \frac{1}{r_0} \frac{dr}{dt} \Big\rvert_{t=0} \right)
\end{equation}
and the system is called \textup{reactive} if $\rho>0$.
\end{defn}
%%%%%%%%%%%%%%%%%%%%%%%%%
Neubert and Caswell \cite{neubert1997alternatives} show that  reactivity is equal to the largest eigenvalue of the symmetric (Hermitian) part of $A$:
%%%%%%%%%%%%%%%%%%%%%%%%%
\begin{equation}
\label{eqn:reactivity-calc}
\rho = \max(\lambda (H_A))
\end{equation}
where $H_A=\frac{1}{2}(A+A^T)$, 
and $\lambda (H_A)$ denotes the set of eigenvalues (the spectrum) 
of $H_A$. For the case when the origin is an attractor, 
they also introduce the concept of {\it maximal amplification}, $\rho_{\max}$, which measures the extent to which reactivity can amplify perturbations, and
the time, $t_{\max}$, needed to achieve maximal amplification.  See Figure \ref{fig:bullseye}.
%%%%%%%%%%%%%%%%%%%%%%%%%
\par
%%%%%%%%%%%%%%%%%%%%%%%%%
In analogy with reactivity, Townley and Hodgson \cite{townley2008erratum} introduce {\it attenuation} as the minimum rate of radial amplification of perturbations. 
System \eqref{eqn:ODE-n} is then called {\it attenuating} if the attenuation is negative, corresponding to (possibly transient) radial decay of solutions. 
%%%%%%%%%%%%%%%%%%%%%%%%%
\par
%%%%%%%%%%%%%%%%%%%%%%%%%
Phenomena such as reactivity are often associated with non-normality, and have been extensively analyzed through that lens. See, for example, \cite{higham1993stiffness, trefethen2020spectra, troude2025unifying}.  In this paper we introduce a novel way to calculate the reactivity and attenuation of System \eqref{eqn:ODE-n} when $A$ is $2 \times 2$, by decomposing the vector field $AX$ into its radial and tangential components. The decomposition sheds new light on the beautiful interplay between the eigenstructure and reactivity properties of the system, thereby revealing interesting parallels between its asymptotic and transient behaviors. See also \cite{haslam2020thesis}, where the decomposition was first introduced.
%%%%%%%%%%%%%%%%%%%%%%%%%
\par \medskip \noindent 
%%%%%%%%%%%%%%%%%%%%%%%%%
\textbf{Consequences of reactivity.}
The surprising consequences of reactivity are known in many fields of mathematics. 
Neubert and Caswell \cite{neubert1997alternatives} were motivated by the study of resilience of ecological systems to perturbations - a subject that becomes ever more important as climate change and human activity deliver new disturbance patterns to those systems.
%%%%%%%%%%%%%%%%%%%%%%%%%
\par
%%%%%%%%%%%%%%%%%%%%%%%%%
Harrington {\it et al.}\ 
\cite{harrington2022reactivity}
consider reactivity and attenuation (in the $\ell_1$ and $\ell_2$ norms), in the context of marine metapopulations. Within their family of models, they show that even in two dimensions the asymptotic and transient dynamics of System \eqref{eqn:ODE-n} can be strikingly different, with populations growing arbitrarily large through reactivity before eventually going extinct. Long transients such as these can present significant challenges for sustainable ecosystem management \cite{hastings2018transient, francis2021management}.
%%%%%%%%%%%%%%%%%%%%%%%%%
\par
%%%%%%%%%%%%%%%%%%%%%%%%%
A nice survey of recent reactivity studies across the sciences can be found in \cite{vesipa2017impact}. Reactivity also arises in engineering systems, where methods from control theory are used to shape transient responses of linear and nonlinear systems; see, for example, \cite{duffie2014control} and \cite{nijmeijer1990nonlinear}. 
The reliable production and distribution of electricity, for example, requires engineers to explicitly consider transient dynamics when designing and auditing power systems, since disturbances such as short-circuits or sudden load changes may result in 
blackouts if the system is not sufficiently robust \cite{hatziargyriou2020definition, ruiz2003comprehensive1}. 
%%%%%%%%%%%%%%%%%%%%%%%%%
\par
%%%%%%%%%%%%%%%%%%%%%%%%%
In nonautonomous linear systems, transient reactivity can drive asymptotic instability: solutions to a nonautonomous system $X'(t) = B(t)X$ can grow without bound, even when the time-varying coefficient matrix $B(t)$ has eigenvalues with negative real part for every `frozen' time $t$. See \cite{josic2008unstable, lawley2014sensitivity, mierczynski2017instability} and the references therein. The critical mechanism underlying unbounded growth in this setting is the accumulation of transient reactivity from the frozen-time (autonomous) systems. This subtle source of instability can lead to counter-intuitive stability properties of periodic orbits in nonlinear autonomous systems (captured by the Floquet multipliers), and of equilibria in flow-kick systems \cite{meyer2018quantifying}, as both are governed by the nonautonomous variational equation \cite{guckenheimer1983nonlinear, hirsch2012differential}.
%%%%%%%%%%%%%%%%%%%%%%%%%
\par
%%%%%%%%%%%%%%%%%%%%%%%%%
When teaching introductory ordinary differential equations (ODEs) or dynamical systems, we often focus on eigenvalue analysis and diagonalization of linear or locally linearized systems to determine the behavior near an equilibrium. There is good reason for this; eigenvalues capture the asymptotic stability of the equilibrium, and solutions of a linear system and its diagonalization are topologically conjugate. But information about the transient behavior of solutions is lost in this process. For example, Higham and Trefethan \cite{higham1993stiffness} point out that if $A$ is diagonal with negative eigenvalues, then System \eqref{eqn:ODE-n} cannot be reactive. They conclude: 
%%%%%%%%%%%%%%%%%%%%%%%%%%%%
\begin{displayquote} We wish to argue that diagonalization\ldots may change the nature of an ODE significantly, and in fact, that some effects which are customarily attributed to linearization or freezing of coefficients can with greater justice be blamed on diagonalization.
\end{displayquote}
%%%%%%%%%%%%%%%%%%%%%%%%%
It is our hope that material in this paper will lend itself to inclusion in the curriculum at several levels, to help build students' intuition for reactivity and transient dynamics.
%%%%%%%%%%%%%%%%%%%%%%%%%
\par \medskip \noindent 
%%%%%%%%%%%%%%%%%%%%%%%%%
\textbf{Overview.}
Our radial and tangential framework is built on four cornerstones. The first is Theorem \ref{theo:RadTang}, in which we use polar coordinates, $(r,\theta)$, to decompose a two-dimensional linear system (System \eqref{eqn:ODE}) into its radial and tangential components. These components yield $r$-independent sinusoidal functions $\Rad(\theta)$ and $\Tang(\theta)$ that we call the radial and tangential functions, respectively (Definition \ref{def:RT}).
The second cornerstone of the paper, Corollary \ref{cor:RadTangS^1}, links these radial and tangential functions to the dynamics of solutions of the system.
In Section \ref{sec:recast}, we leverage the sinusoidal structure of the radial function, $\Rad$, to reveal more of the structure of reactivity. The third cornerstone of the paper is Definition \ref{def:ReactiveRegion}, where we 
expand on the ideas of reactivity and attenuation to decompose the phase plane of the system into reactive and attenuating regions.
In Section \ref{sec:eigen}, we view the eigenstructure of the system through the lens of $\Rad$ and $\Tang$, 
paving the way for the fourth cornerstone of the paper: a dual structure of orthovectors and orthovalues that quantify
properties of the reactivity and the reactive region (Section \ref{sec:ortho}).
%%%%%%%%%%%%%%%%%%%%%%%%%
\par
%%%%%%%%%%%%%%%%%%%%%%%%%
In Sections \ref{sec:SF}--\ref{sec:nonaut}, we build on the four cornerstones. 
In response to the observation that diagonalization masks any transient reactivity or attenuation of a system \cite{higham1993stiffness},  we propose, in Theorem \ref{theo:std_forms}, four standard matrix forms that are easy to calculate and that illuminate the reactivity dynamics. 
The standard forms result from conjugation by pure rotation, and hence respect both the transient and asymptotic behavior of a system. 
In Section \ref{sec:BoundingReactivity}, we quantify the maximal amplification in a system. For reactive systems with an attracting equilibrium, we show  that while the reactivity can be arbitrarily large for any (non-orthogonal) pair of real eigenvectors (Theorem \ref{theo:GivenEigenvectors-AnyReactivity}), the
maximal amplification is bounded (Theorem \ref{theo:MaxAmpUpperBound}). We also describe how to calculate the maximal amplification exactly, with a formula that interweaves the eigen- and ortho-structures of the system (Theorem \ref{theo:MaxAmpFormula}).
%%%%%%%%%%%%%%%%%%%%%%%%%
In Section \ref{sec:nonaut}, we use our reactivity framework to shed new light on established examples of nonautonomous linear systems in which the origin is unstable, despite being asymptotically attracting for all frozen moments in time.

\section{Accessing radial dynamics directly}
\label{sec:RadTang}
%%%%%%%%%%%%%%%%%%%%%%%%%%%
We are interested in reactivity and the transient excursions of trajectories away from the origin. To capture these radial dynamics directly, we write the Cartesian coordinates of $X \in \R^2$ in terms of the polar coordinates $(r, \theta)$, where $r=\|X\| \geq 0$, and $\theta \in \R \Mod{2\pi}$ measures the counter-clockwise angle of $X$ from the $x$-axis. Thus, throughout the paper, we work with the two-dimensional autonomous linear system of ODEs:
%%%%%%%%%%%%%%%%%%%%%%%%%%%
\begin{equation}
\label{eqn:ODE}
\frac{dX}{dt}=AX, \mbox{ where } X=\begin{pmatrix} x_1 \\ x_2 \end{pmatrix} 
= \begin{pmatrix} r\cos\theta \\ r\sin\theta \end{pmatrix} \in \R^2 \mbox{ and } A = \begin{pmatrix} a_{11} & a_{12} \\ a_{21} & a_{22}\end{pmatrix}.
\end{equation}
%%%%%%%%%%%%%%%%%%%%%%%%%%%
\noindent 
%%%%%%%%%%%%%%%%%%%%%%%%%%%%
\textbf{Radial and tangential components of $AX$.} 
The first cornerstone of the paper is Theorem \ref{theo:RadTang}, below, which teases apart the radial and tangential components of the vector field $AX$.
Here, by `tangential component', we mean the component of $AX$ tangential to the circle of radius $\|X\|$, and therefore orthogonal to the purely radial component.
See Figure \ref{fig:RTdef}. This straightforward decomposition of $AX$ reveals a surprisingly rich structure with which to analyze reactivity in two-dimensional linear systems. 
First, we introduce some notation that will be used throughout the paper.
%%%%%%%%%%%%%%%%%%%%%%%%%%%
\begin{notn} 
\label{notn:J}
%%%%%%%%%%%%%%%%%%%%%%%%%%%
Let $J$ and $J^{-1}$ be the matrices representing counter-clockwise and clockwise rotation by $\pi/2$ radians in $\R^2$, respectively. 
For $X = \begin{pmatrix} x_1 \\ x_2 \end{pmatrix} \in \R^2$, let $X^\perp$ denote the vector orthogonal to $X$ obtained by counter-clockwise rotation by $\pi/2$:
%%%%%%%%%%%%%%%%%%%%%%%%%%%
\begin{equation*}
J = \begin{pmatrix} 0 & -1 \\ 1 & 0\end{pmatrix}, \ \ J^{-1} = -J = \begin{pmatrix} 0 & 1 \\ -1 & 0\end{pmatrix}, \mbox{ and }
X^\perp = JX = \begin{pmatrix} -x_2 \\ x_1 \end{pmatrix}.
\end{equation*}
%%%%%%%%%%%%%%%%%%%%%%%%%%%
\end{notn}
%%%%%%%%%%%%%%%%%%%%%%%%%%%
%%%%%%%%%%%%%%%%%%%%%%%%%%%
\begin{figure}
    \centering
    \setlength{\abovecaptionskip}{-8pt}
\includegraphics[width=\textwidth]{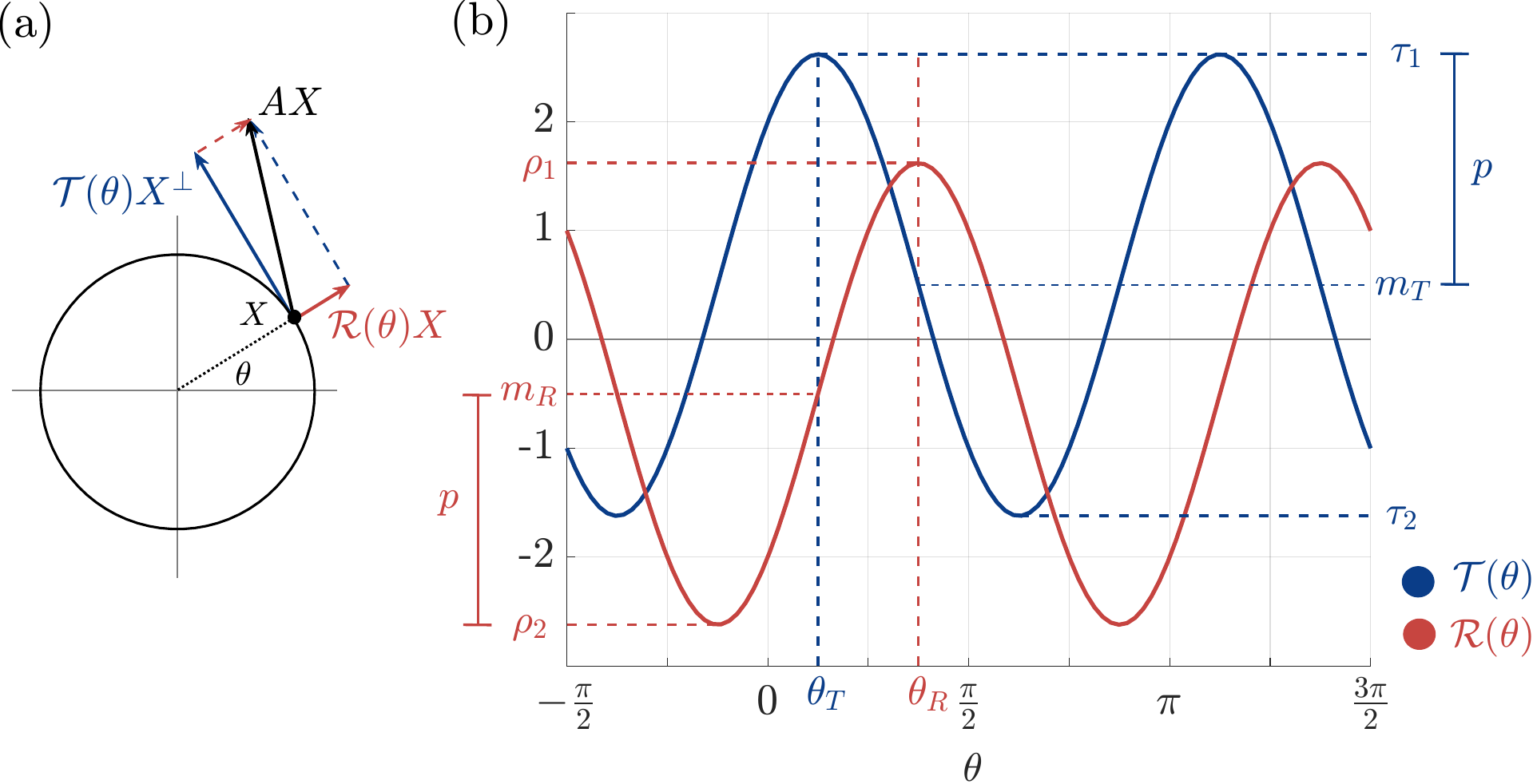}
\setlength{\belowcaptionskip}{-2pt}
    \caption{
    \textbf{(a)} The vector field $AX$ (black) is decomposed into components parallel to (red) and orthogonal to (dark blue) the vector $X$. 
    \textbf{(b)} Graphs of the radial function, $\Rad(\theta)$, in red, and the tangential function, $\Tang(\theta)$, in dark blue, vs. $\theta$ for System \eqref{eqn:ODE} with coefficient matrix $A=\left(\begin{smallmatrix}-2&1\\2&1 \end{smallmatrix}\right)$.
    $\Rad$ and $\Tang$ are sinusoidal with period $\pi$ and amplitude $p$, and are $\pi/4$ out of phase. 
    $\Rad$ has maximum and minimum values $\rho_1$ and $\rho_2$, respectively, midline $m_R$, and maximum at $\theta_R$.
    $\Tang$ has maximum and minimum values $\tau_1$ and $\tau_2$, respectively, midline $m_T$, and maximum at $\theta_T=\theta_R-\pi/4$ (Corollary \ref{cor:sinusoidal}). See
    Table \ref{tab:notation} for links to interactive Desmos pages graphing $\Rad$ and $\Tang$.} 
    \label{fig:RTdef}
\end{figure}
%%%%%%%%%%%%%%%%%%%%%%%%%%%
%%%%%%%%%%%%%%%%%%%%%%%%%%%
\begin{theo}
\label{theo:RadTang}
%%%%%%%%%%%%%%%%%%%%%%%%%%%
Given real $A = \begin{pmatrix} a_{11} & a_{12} \\ a_{21} & a_{22}\end{pmatrix}$ and 
$X=\begin{pmatrix} r\cos\theta \\ r\sin\theta \end{pmatrix}$, 
$r \geq 0, \ \theta \in \R\Mod{2\pi}$, the vector $AX \in \R^2$ can be decomposed into components parallel and orthogonal to $X$ by:
\begin{equation}
\label{eqn:decompose}
AX=\Rad(\theta)X+\Tang(\theta)X^\perp, 
\end{equation}
where $\Rad(\theta)$ and $\Tang(\theta)$ are real-valued functions given by
\begin{align}
\label{eqn:Rad}
\Rad(\theta) & =m_R + p \cos(2(\theta - \theta_R))\\
\label{eqn:Tang}
\Tang(\theta) & =m_T - p \sin(2(\theta - \theta_R))
\end{align}
with 
\begin{align}
\label{eqn:m_R}
m_R & = \frac{1}{2}(a_{11}+a_{22}), \\
\label{eqn:m_T}
m_T & = \frac{1}{2}(a_{21}-a_{12}), \\
\label{eqn:p}
p & = \frac{1}{2} \sqrt{(a_{11}-a_{22})^2 + (a_{12}+a_{21})^2},
\end{align}
and, if $p \neq 0$,
\begin{align}
\label{eqn:theta_R}
\theta_R & = \frac{1}{2}\arctan(a_{12}+a_{21},a_{11}-a_{22})\in \R\Mod{\pi}.
\end{align}
%%%%%%%%%%%%%%%%%%%%%%%%%%%
\end{theo}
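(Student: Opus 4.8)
The plan is to exploit the fact that, for $r \neq 0$, the pair $\{X, X^\perp\}$ is an orthogonal basis of $\R^2$ with $\|X\| = \|X^\perp\| = r$. Hence the decomposition \eqref{eqn:decompose} exists and is unique, and its coefficients are recovered by orthogonal projection:
\begin{equation*}
\Rad(\theta) = \frac{X^\top A X}{r^2}, \qquad \Tang(\theta) = \frac{(X^\perp)^\top A X}{r^2}.
\end{equation*}
The case $r = 0$ is trivial: then $X = X^\perp = AX = 0$ and any choice of $\Rad, \Tang$ works, in particular the one given by \eqref{eqn:Rad}--\eqref{eqn:Tang}. So the substance of the proof is to evaluate these two projections.

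First I would substitute $X = r(\cos\theta, \sin\theta)^\top$ and $X^\perp = r(-\sin\theta, \cos\theta)^\top$ into the two quadratic forms. Expanding gives $X^\top A X = r^2\big(a_{11}\cos^2\theta + (a_{12}+a_{21})\sin\theta\cos\theta + a_{22}\sin^2\theta\big)$ and a similar expression for $(X^\perp)^\top A X$. Applying the double-angle identities $\cos^2\theta = \tfrac12(1+\cos 2\theta)$, $\sin^2\theta = \tfrac12(1-\cos 2\theta)$, $\sin\theta\cos\theta = \tfrac12\sin 2\theta$ collapses each to the form (constant) $+$ (coeff)$\cos 2\theta$ $+$ (coeff)$\sin 2\theta$. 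The constant terms are exactly $m_R = \tfrac12(a_{11}+a_{22})$ and $m_T = \tfrac12(a_{21}-a_{12})$ from \eqref{eqn:m_R}--\eqref{eqn:m_T}, and the oscillatory parts are $\tfrac12(a_{11}-a_{22})\cos 2\theta + \tfrac12(a_{12}+a_{21})\sin 2\theta$ for $\Rad$ and $\tfrac12(a_{12}+a_{21})\cos 2\theta - \tfrac12(a_{11}-a_{22})\sin 2\theta$ for $\Tang$.

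Next I would perform the standard amplitude--phase reduction: setting $\tfrac12(a_{11}-a_{22}) = p\cos 2\theta_R$ and $\tfrac12(a_{12}+a_{21}) = p\sin 2\theta_R$ forces precisely \eqref{eqn:p} and \eqref{eqn:theta_R}, the two-argument arctan being the bookkeeping device that recovers $2\theta_R \in \R\Mod{2\pi}$, hence $\theta_R \in \R\Mod{\pi}$, from the pair of signed coordinates. The angle-subtraction formulas then yield $\Rad(\theta) = m_R + p\cos(2(\theta - \theta_R))$ and $\Tang(\theta) = m_T - p\sin(2(\theta - \theta_R))$, i.e.\ \eqref{eqn:Rad}--\eqref{eqn:Tang}. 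If $p = 0$ then $a_{11} = a_{22}$ and $a_{12} = -a_{21}$, so $\Rad$ and $\Tang$ are constant and $\theta_R$ is immaterial, which is why \eqref{eqn:theta_R} is only asserted for $p \neq 0$.

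The argument is a direct computation, so there is no single deep obstacle; the points needing care are (i) justifying the projection formulas, which rests only on $X \cdot X^\perp = 0$ and $\|X^\perp\| = \|X\|$, and (ii) the amplitude--phase step, where one must verify that \emph{the same} $p$ and $\theta_R$ simultaneously linearize both $\Rad$ and $\Tang$. This works precisely because the coefficient vector $\big(\tfrac12(a_{12}+a_{21}),\, -\tfrac12(a_{11}-a_{22})\big)$ governing $\Tang$ is the $90^\circ$ rotation of the one governing $\Rad$, which is exactly what converts $\cos(2(\theta-\theta_R))$ into $-\sin(2(\theta-\theta_R))$. Tracking signs correctly in the expansion of $(X^\perp)^\top A X$ is the most error-prone part of the bookkeeping.
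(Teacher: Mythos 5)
Your proposal is correct and follows essentially the same route as the paper's proof: orthogonal projection of $AX$ onto $X$ and $X^\perp$, expansion via double-angle identities, and amplitude--phase reduction to a single sinusoid. Your explicit handling of the $r=0$ case and your remark that the $\Tang$ coefficient vector is the $90^\circ$ rotation of the $\Rad$ one (so a single $p,\theta_R$ serves both) are minor refinements the paper leaves implicit, but the substance is identical.
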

%%%%%%%%%%%%%%%%%%%%%%%%%%%
\par 
%%%%%%%%%%%%%%%%%%%%%%%%%%%
\Remark
We use the two-argument form of $\arctan(y,x)\in\R\Mod{2\pi}$ to ensure that $\theta_R$ is well-defined in $\R\Mod{\pi}$, since $\Rad(\theta)$ and $\Tang(\theta)$ are $\pi$-periodic functions.
For the reader's convenience, terms defined throughout the paper are summarized in Table \ref{tab:notation}.
%%%%%%%%%%%%%%%%%%%%%%%%%%%
\par 
%%%%%%%%%%%%%%%%%%%%%%%%%%%
\begin{proof}[Proof of Theorem \ref{theo:RadTang}]
Projecting $AX$ onto the $X$ and $X^\perp$ directions, we have
\begin{align*}
AX & = \left(AX \cdot \frac{X}{\|X\|}\right)\frac{X}{\|X\|} 
+ \left(AX \cdot \frac{X^\perp}{\|X^\perp\|}\right)\frac{X^\perp}{\|X^\perp\|} \nonumber \\
 & = \left(\frac{AX \cdot X}{\|X\|^2}\right) X + \left(\frac{AX \cdot X^\perp}{\|X^\perp\|^2}\right) X^\perp.
\end{align*} 
Thus, the coefficient of $X$ in Equation \eqref{eqn:decompose} is given by
\begin{align*}
\left(\frac{AX \cdot X}{\|X\|^2}\right) & = \frac{1}{r^2}
\begin{pmatrix} a_{11} & a_{12} \\ a_{21} & a_{22}\end{pmatrix}
\begin{pmatrix} r\cos\theta \\ r\sin\theta \end{pmatrix} \cdot
\begin{pmatrix} r\cos\theta \\ r\sin\theta \end{pmatrix}\\
& = a_{11}\cos^2\theta + (a_{12}+a_{21})\cos\theta\sin\theta + a_{22}\sin^2\theta.
\end{align*} 
Invoking double-angle identities, we have
\begin{align*}
\left(\frac{AX \cdot X}{\|X\|^2}\right) 
& = \frac{1}{2}\big(a_{11}(\cos(2\theta)+1) + (a_{12}+a_{21})\sin(2\theta) + a_{22}(1-\cos(2\theta))\big)\\
& = \frac{1}{2}\big((a_{11}+a_{22})+(a_{11}-a_{22})\cos(2\theta)+(a_{12}+a_{21})\sin(2\theta)\big).
\end{align*}
Rewriting this as a single sinusoid yields
\begin{align*}
\left(\frac{AX \cdot X}{\|X\|^2}\right) 
& = m_R + p \cos(2(\theta - \theta_R)) = \Rad(\theta),
\end{align*}
where $p$ and $\theta_R$ are given by Equations \ref{eqn:p} and \ref{eqn:theta_R}.
Similarly, $\|X^\perp\|=\|X\|=r$, and
the coefficient of $X^\perp$ in Equation \eqref{eqn:decompose} is given by
\begin{align*}
\left(\frac{AX \cdot X^\perp}{\|X^\perp\|^2}\right) & = \frac{1}{r^2}
\begin{pmatrix} a_{11} & a_{12} \\ a_{21} & a_{22}\end{pmatrix}
\begin{pmatrix} r\cos\theta \\ r\sin\theta \end{pmatrix} \cdot
\begin{pmatrix} -r\sin\theta \\ r\cos\theta \end{pmatrix}\\
& = (a_{22}-a_{11})\cos\theta\sin\theta - a_{12}\sin^2\theta + a_{21}\cos^2\theta \\
& = \frac{1}{2}((a_{21}-a_{12})+(a_{12}+a_{21})\cos(2\theta)+(a_{22}-a_{11})\sin(2\theta)) \\
& = m_T - p \sin(2(\theta - \theta_R) \\
& =  \Tang(\theta). 
\end{align*} 
Thus, $AX=\Rad(\theta)X+\Tang(\theta)X^\perp$. 
\end{proof}
%%%%%%%%%%%%%%%%%%%%%%%%%%%
\par 
%%%%%%%%%%%%%%%%%%%%%%%%%%%
\begin{defn}
\label{def:RT}
Given System \eqref{eqn:ODE}, we call $\Rad(\theta)$ and $\Tang(\theta)$ of Theorem \ref{theo:RadTang} the radial and tangential functions of the system, respectively.  
\end{defn}
%%%%%%%%%%%%%%%%%%%%%%%%%%%
\par 
%%%%%%%%%%%%%%%%%%%%%%%%%%%
\Remark
The $r$-independence of the radial and tangential  functions is a consequence of the linearity of the system; along radial lines in $\R^2$, the vector field is simply scaled by the radius.   
%%%%%%%%%%%%%%%%%%%%%%%%%%%
\par \medskip \noindent 
%%%%%%%%%%%%%%%%%%%%%%%%%%%
\textbf{$\Rad$ and $\Tang$ capture the dynamics of System \ref{eqn:ODE} on $S^1$.}f
Let $S^1 \subset \R^2$ be the  unit circle, $r=1$. Leveraging the $r$-independence of $\Rad(\theta)$ and $\Tang(\theta)$, the second cornerstone of the paper is given by Corollary \ref{cor:RadTangS^1}, in which we observe that on $S^1$, $\Rad(\theta)$ and $\Tang(\theta)$ give precisely the radial and tangential velocities of solutions to System \eqref{eqn:ODE}. This means we can study the reactivity dynamics in all of $\R^2$ through the single-variable functions $\Rad$ and $\Tang$.
%%%%%%%%%%%%%%%%%%%%%%%%%%%
%%%%%%%%%%%%%%%%%%%%%%%%%%%
\begin{cor}
\label{cor:RadTangS^1}
Given a solution $X(t) = \begin{pmatrix} r(t)\cos(\theta(t)) \\ r(t)\sin(\theta(t)) \end{pmatrix}$ of System \eqref{eqn:ODE}, 
the radial velocity, $dr/dt$, of $X(t)$ is given by 
\[
\frac{dr}{dt} = r\Rad(\theta), 
\]
and the tangential velocity, $rd\theta/dt$, of $X(t)$ is given by 
\[
r\frac{d\theta}{dt} = r\Tang(\theta).
\]
Whenever $X(t) \in S^1$, the angular velocity, $d\theta/dt$, coincides with the tangential velocity, and
\begin{align*}
\frac{dr}{dt} & = \Rad(\theta) \\
\frac{d\theta}{dt} & =\Tang(\theta).
\end{align*}
\end{cor}
%%%%%%%%%%%%%%%%%%%%%%%%%%%
%%%%%%%%%%%%%%%%%%%%%%%%%%%
\begin{proof}
By Theorem \ref{theo:RadTang}, for a solution $X(t)=\begin{pmatrix} r(t)\cos(\theta(t)) \\ r(t)\sin(\theta(t)) \end{pmatrix}$ of System \eqref{eqn:ODE}, 
\begin{equation*}
\frac{dX}{dt} = AX =
\Rad(\theta)X+\Tang(\theta)X^\perp 
= r \Rad(\theta) \begin{pmatrix} \cos\theta \\ \sin\theta \end{pmatrix} 
+ r\Tang(\theta) \begin{pmatrix} -\sin\theta \\ \cos\theta \end{pmatrix}. 
\end{equation*}
Simultaneously, by the chain rule,
\begin{equation*}
    \frac{dX}{dt} = \frac{dr}{dt}\begin{pmatrix}
        \cos\theta\\\sin\theta
    \end{pmatrix}
    +r\frac{d\theta}{dt}\begin{pmatrix}
        -\sin\theta\\
        \cos\theta
    \end{pmatrix}.
\end{equation*}
Equating coefficients, we have 
\begin{align}
\label{eqn:dr}
\frac{dr}{dt} & = r\Rad(\theta) \\
\label{eqn:dtheta}
r\frac{d\theta}{dt} & =r\Tang(\theta) .
\end{align}
The result for $X \in S^1$ follows by setting $r=1$.
\end{proof}
%%%%%%%%%%%%%%%%%%%%%%%%%%%
\par
%%%%%%%%%%%%%%%%%%%%%%%%%%%
\textbf{$\Rad$ and $\Tang$ determine $A$.}
In Proposition \ref{prop:RadTang-to-A}, we show that we can recover the matrix $A$ from the functions $\Rad$ and $\Tang$. The power of this proposition is 
that System \eqref{eqn:ODE} can be uniquely defined either by the matrix $A$ (i.e.\ the components $a_{11}, a_{12}, a_{21}, a_{22}$) or by the pair of functions $\Rad$ and $\Tang$ (i.e.\ the parameters $m_R, m_T, p, \theta_R$).
%%%%%%%%%%%%%%%%%%%%%%%%%%%
%%%%%%%%%%%%%%%%%%%%%%%%%%%
\begin{prop}
\label{prop:RadTang-to-A}
Given real 
$A=\begin{pmatrix} a_{11} & a_{12} \\ a_{21} & a_{22} \end{pmatrix}$, 
with radial and tangential functions $\Rad(\theta)$ and $\Tang(\theta)$ as in Theorem \ref{theo:RadTang},
\begin{equation*}
A=\begin{pmatrix} \Rad(0) & -\Tang(\pi/2) \\ \Tang(0) & \Rad(\pi/2) \end{pmatrix}.
\end{equation*}
\end{prop}
%%%%%%%%%%%%%%%%%%%%%%%%%%%
%%%%%%%%%%%%%%%%%%%%%%%%%%%
\begin{proof}
Consider
$X=\begin{pmatrix} \cos\theta \\ \sin\theta \end{pmatrix} \in S^1$. 
At $\theta = 0$, $X=\begin{pmatrix} 1 \\ 0 \end{pmatrix}$ and 
$X^\perp=\begin{pmatrix} 0 \\ 1 \end{pmatrix}$.
Thus, by Theorem \ref{theo:RadTang}, 
\begin{equation*}
A \begin{pmatrix} 1 \\ 0 \end{pmatrix}
= \Rad(0) \begin{pmatrix} 1 \\ 0 \end{pmatrix} +
\Tang(0) \begin{pmatrix} 0 \\ 1 \end{pmatrix}
=\begin{pmatrix} \Rad(0) \\ \Tang(0) \end{pmatrix}.
\end{equation*}
Similarly, at $\theta = \pi/2$, 
$X=\begin{pmatrix} 0 \\ 1 \end{pmatrix}$ and 
$X^\perp=\begin{pmatrix} -1 \\ 0 \end{pmatrix}$.
Thus,
\begin{equation*}
A \begin{pmatrix} 0 \\ 1 \end{pmatrix}
= \Rad(\pi/2) \begin{pmatrix} 0 \\ 1 \end{pmatrix} +
\Tang(\pi/2) \begin{pmatrix} -1 \\ 0 \end{pmatrix}
=\begin{pmatrix} -\Tang(\pi/2) \\ \Rad(\pi/2) \end{pmatrix}.
\end{equation*}
Simultaneously, we know that
\begin{equation*}
A \begin{pmatrix} 1 \\ 0 \end{pmatrix} = 
\begin{pmatrix} a_{11} \\ a_{21} \end{pmatrix}, \  \mbox{and} \ 
A\begin{pmatrix} 0 \\ 1 \end{pmatrix} =
\begin{pmatrix} a_{12} \\ a_{22} \end{pmatrix}.
\end{equation*}
The proposition follows.
\end{proof}
%%%%%%%%%%%%%%%%%%%%%%%%%%%
\par \medskip \noindent 
%%%%%%%%%%%%%%%%%%%%%%%%%%%
\textbf{Sinusoidal properties of $\Rad$ and $\Tang$.}
Figures \ref{fig:RTdef}--\ref{fig:mT-zero-series1} and \ref{fig:reac-increase-series3}--\ref{fig:std_forms} show the graphs of $\Rad(\theta)$ and $\Tang(\theta)$ for a variety of matrices $A$. The striking sinusoidal characteristics of $\Rad$ and $\Tang$ are gathered together in Corollary \ref{cor:sinusoidal}.
%%%%%%%%%%%%%%%%%%%%%%%%%%%
%%%%%%%%%%%%%%%%%%%%%%%%%%%
\begin{cor}[of Theorem \ref{theo:RadTang}]
\label{cor:sinusoidal}
The functions $\Rad(\theta)$ and $\Tang(\theta)$ are sinusoidal with period $\pi$.  They have the same amplitude, $p$, and are $\frac{\pi}{4}$ out of phase.
The midlines of $\Rad$ and $\Tang$ are given by $m_R$ and $m_T$, respectively. The maxima of $\Rad$ and $\Tang$ occur at $\theta_R$ and $\theta_T$, respectively, where $\theta_R$ is defined in Equation \eqref{eqn:theta_R} and $\theta_T = \theta_R - \tfrac{\pi}{4}\in \R\Mod{\pi}$.
The maximum and minimum values of $\Rad$ and $\Tang$ are: 
%given by:
\begin{align*}
\rho_1 & = \max_{\theta \in \R} \Rad(\theta) = \Rad(\theta_R) = m_R+p,\\
\rho_2 & = \min_{\theta \in \R} \Rad(\theta) = \Rad\left(\theta_R \pm \frac{\pi}{2}\right) = m_R-p, \\
\tau_1 & = \max_{\theta \in \R} \Tang(\theta) = \Tang\left(\theta_T\right) = m_T+p, \\
\tau_2 & = \min_{\theta \in \R} \Tang(\theta) = \Tang\left(\theta_T \pm \frac{\pi}{2}\right) = m_T-p.
\end{align*}
\end{cor}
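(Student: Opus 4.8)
The plan is to read every assertion directly off the closed forms for $\Rad$ and $\Tang$ already established in Theorem \ref{theo:RadTang}, namely Equations \eqref{eqn:Rad}--\eqref{eqn:Tang}, which present both functions as explicit sinusoids in the argument $2\theta$. First I would record the period: replacing $\theta$ by $\theta+\pi$ adds $2\pi$ to the argument $2(\theta-\theta_R)$ inside both the $\cos$ and the $\sin$, so $\Rad$ and $\Tang$ are unchanged and hence $\pi$-periodic; when $p\neq 0$ no smaller period is possible since $\cos$ and $\sin$ have minimal period $2\pi$ (when $p=0$ both functions are constant and the statements about $\theta_R,\theta_T$ are vacuous). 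The amplitude of each is the coefficient $p$ by inspection, and the constant terms give the midlines $m_R$ and $m_T$.

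Next I would establish the phase relationship and locate the maximum of $\Tang$ by rewriting it in the same $\cos$ form as $\Rad$, using $-\sin u = \cos(u+\pi/2)$ with $u = 2(\theta-\theta_R)$:
\[
\Tang(\theta) = m_T + p\cos\!\Big(2(\theta-\theta_R)+\tfrac{\pi}{2}\Big) = m_T + p\cos\!\Big(2\big(\theta-(\theta_R-\tfrac{\pi}{4})\big)\Big).
\]
Thus $\Tang$ has the same sinusoidal shape as $\Rad$ with $\theta_R$ replaced by $\theta_T := \theta_R - \tfrac{\pi}{4}\in\R\Mod{\pi}$, which simultaneously shows the two functions are $\tfrac{\pi}{4}$ out of phase and identifies $\theta_T$ as the phase of $\Tang$. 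Then for the extrema I would use that $\cos(2(\theta-\theta_R))$ equals $1$ exactly when $2(\theta-\theta_R)\equiv 0\Mod{2\pi}$, i.e.\ $\theta\equiv\theta_R\Mod{\pi}$, giving $\rho_1 = \Rad(\theta_R) = m_R+p$; and equals $-1$ exactly when $2(\theta-\theta_R)\equiv\pi\Mod{2\pi}$, i.e.\ $\theta\equiv\theta_R\pm\tfrac{\pi}{2}\Mod{\pi}$, giving $\rho_2 = m_R - p$. Applying the identical argument to the rewritten form of $\Tang$ yields $\tau_1 = \Tang(\theta_T) = m_T+p$ and $\tau_2 = \Tang(\theta_T\pm\tfrac{\pi}{2}) = m_T-p$.

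There is essentially no obstacle here; the content is bookkeeping once Theorem \ref{theo:RadTang} is in hand. The only point requiring a little care is consistency of the "$\Mod{\pi}$" conventions: because $\Rad$ and $\Tang$ are $2\pi$-periodic in $2\theta$, their maxima are naturally indexed modulo $\pi$ in $\theta$, which is exactly why $\theta_R$ and $\theta_T$ were defined in $\R\Mod{\pi}$ (cf.\ the Remark following Theorem \ref{theo:RadTang}). Accordingly I would be careful to carry out the $\tfrac{\pi}{4}$ phase shift and the locations of the extrema as statements in $\R\Mod{\pi}$ rather than $\R\Mod{2\pi}$.
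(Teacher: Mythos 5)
Your proposal is correct and matches the paper's treatment: the paper gives no separate proof, stating only that these properties ``follow immediately'' from Equations (\ref{eqn:Rad}) and (\ref{eqn:Tang}), which is precisely the read-off-the-closed-forms argument you carry out (including the rewrite $-\sin u=\cos(u+\pi/2)$ that identifies $\theta_T=\theta_R-\pi/4$ and the careful handling of the $\R\Mod{\pi}$ convention). Nothing is missing.
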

%%%%%%%%%%%%%%%%%%%%%%%%%%%
%%%%%%%%%%%%%%%%%%%%%%%%%%%
\begin{proof}
The Corollary follows immediately from Equations \eqref{eqn:Rad} and \eqref{eqn:Tang} of Theorem \ref{theo:RadTang}
\end{proof}
%%%%%%%%%%%%%%%%%%%%%%%%%%%
\par 
%%%%%%%%%%%%%%%%%%%%%%%%%%%
\Remark
Corollary \ref{cor:sinusoidal} emphasizes the $\pi$ (rather than 2$\pi$) periodicity of the radial and tangential dynamics of System \ref{eqn:ODE} on $S^1$. This is another clear consequence of linearity, since $A(-X)=-(AX)$ for all $X \in \mathbb{R}^2$.
%%%%%%%%%%%%%%%%%%%%%%%%%%%
\par \medskip 
%%%%%%%%%%%%%%%%%%%%%%%%%%%
\textit{Resources.} See Table \ref{tab:notation} for links to interactive Desmos pages for graphing $\Rad, \Tang$.
%%%%%%%%%%%%%%%%%%%%%%%%%%%%
%%%%%%%%%%%%%%%%%%%%%%%%%%%%

\section{Recasting reactivity and attenuation in terms of \texorpdfstring{$\Rad$}{R} and \texorpdfstring{$\Tang$}{T}}
\label{sec:recast}
We now have direct access to the radial dynamics of System \eqref{eqn:ODE} through the radial function $\Rad(\theta)$. In this section, we leverage the sinusoidal structure of $\Rad$ to reveal more of the reactivity structure of the system. We begin, in Theorem \ref{theo:reactive-rho=rho_1}, by recasting reactivity and attenuation as the maximum and minimum values of $\Rad$, respectively.
%%%%%%%%%%%%%%%%%%%%%%%%%%%
\begin{theo}
\label{theo:reactive-rho=rho_1}
System \eqref{eqn:ODE} has reactivity $\rho_1=m_R+p$ and attenuation $\rho_2=m_R-p$. 
\end{theo}
%%%%%%%%%%%%%%%%%%%%%%%%%%%
\begin{proof}
By Definition \ref{def:reactivity}, the linearity of the system, and Corollaries \ref{cor:RadTangS^1} and \ref{cor:sinusoidal}, the reactivity of System \eqref{eqn:ODE} is given by
%%%%%%%%%%%%%%%%%%%%%%%%%%%
\begin{equation*}
\rho= \max_{X_0 \neq 0} \left(  \left. \frac{1}{r_0} \frac{dr}{dt} \right|_{t=0} \right)
= \max_{X_0 \in S^1} \left(\left. \frac{dr}{dt} \right|_{t=0}\right)
= \max_{\theta \in \R} \Rad(\theta)
= \rho_1 
=m_R+p.
\end{equation*}
Similarly, the attenuation of System \eqref{eqn:ODE} is given by
\begin{equation*}
\min_{X_0 \neq 0} \left(  \left. \frac{1}{r_0} \frac{dr}{dt} \right|_{t=0} \right)
= \min_{\theta \in \R} \Rad(\theta)
= \rho_2
=m_R-p.
\end{equation*}
\end{proof}
%%%%%%%%%%%%%%%%%%%%%%%%%%%
\par
%%%%%%%%%%%%%%%%%%%%%%%%%%%
\Remark
It is straightforward to see that Theorem \ref{theo:reactive-rho=rho_1} agrees with the established calculation of reactivity as the largest eigenvalue
of the Hermitian part of $A$ (Equation \eqref{eqn:reactivity-calc}) by using Equations \eqref{eqn:m_R} and \eqref{eqn:p} for $m_R$ and $p$.
%%%%%%%%%%%%%%%%%%%%%%%%%%%
\par \medskip \noindent 
%%%%%%%%%%%%%%%%%%%%%%%%%%%
\textbf{Criteria for reactivity.} 
Continuing to leverage the sinusoidal structure of $\Rad$, in Theorem \ref{theo:reactivity-criteria} we  
find simple criteria, in terms of the parameters $m_R$ and $p$ of $\Rad,$ under which System \ref{eqn:ODE} is reactive (or attenuating). 
%%%%%%%%%%%%%%%%%%%%%%%%%%%
\begin{theo}
\label{theo:reactivity-criteria}
Given System \eqref{eqn:ODE},
\begin{enumerate}[label=(\roman*),labelindent=0.9\parindent,labelsep=1pt,align=left,leftmargin=*]
\item The system is reactive if and only if $\rho_1>0$. It is attenuating if and only if $\rho_2<0$.
\item If $m_R > 0$, the system is reactive. If $m_R < 0$, the system is attenuating. 
\item If $m_R \leq 0$ ($m_R \geq 0$), then the system is reactive (attenuating) if and only if $p > |m_R|$.
\item If the system has an attracting (repelling) equilibrium at the origin, then the system is reactive (attenuating) if and only if $p > |m_R|$.
\end{enumerate}
\end{theo}
%%%%%%%%%%%%%%%%%%%%%%%%%%%
\begin{proof}
Recall from Section \ref{sec:Intro} that System \eqref{eqn:ODE} is defined to be {\it reactive} if the reactivity is positive, and {\it attenuating} if the attenuation is negative. Part $(i)$ then follows immediately from Theorem \ref{theo:reactive-rho=rho_1}.
For part $(ii)$, we know $p \geq 0$ by definition (Equation \eqref{eqn:p}). Thus, if $m_R > 0$, then $\rho_1 = m_R+p > 0$, and System \eqref{eqn:ODE} is reactive. The analogous statement in the case $m_R < 0$ is proved similarly. 
For part $(iii)$, if $m_R \leq 0$, then $\rho_1 = m_R+p > 0$ if and only if $p > |m_R|$. The analogous statement in the case $m_R \geq 0$ is proved similarly.
%%%%%%%%%%%%%%%%%%%%%%%%%%%
\par 
%%%%%%%%%%%%%%%%%%%%%%%%%%%
For part $(iv)$, note that 
\[
m_R = \frac{1}{2}\tr(A) = \frac{1}{2}(\lambda_1+\lambda_2)
\]
where $\tr(A)$ denotes the trace of $A$, and $\lambda_1, \lambda_2$ denote the eigenvalues of $A$. If System \eqref{eqn:ODE} has an attracting equilibrium at the origin, then both eigenvalues have negative real part, and $m_R < 0$. Thus, by part $(iii)$, the system is reactive if and only if $p > |m_R|$. The case when System \eqref{eqn:ODE} has a repelling equilibrium is proved similarly.
\end{proof}
%%%%%%%%%%%%%%%%%%%%%%%%%%%
\par 
%%%%%%%%%%%%%%%%%%%%%%%%%%%
\Remark
While we are most interested in the cases of reactive attractors and attenuating repellers described by parts $(iii)$ and $(iv)$, Theorem \ref{theo:reactivity-criteria} also applies to other equilibria, confirming that all systems with repellers are reactive ($m_R > 0$) and all systems with attractors are attenuating ($m_R < 0$). Systems with saddle equilibria are both reactive and attenuating, since the eigenvalues are of opposite signs, and thus $\rho_1>0$ and $\rho_2<0$.
%%%%%%%%%%%%%%%%%%%%%%%%%%%
\par \medskip \noindent 
%%%%%%%%%%%%%%%%%%%%%%%%%%%
\textbf{Reactive and attenuating regions.}
The third cornerstone of the paper is given by Definition \ref{def:ReactiveRegion}, in which we leverage the sinusoidal structure of $\Rad$ again, to decompose $\R^2$ into {\it reactive} and {\it attenuating} regions on which $r$ is monotonically increasing or decreasing, respectively, along trajectories of System \eqref{eqn:ODE}. See Figure \ref{fig:ReactiveRegion}.
%%%%%%%%%%%%%%%%%%%%%%%%%%%
This organization of the dynamics into reactive and attenuating regions is key to results throughout the paper: in Sections \ref{sec:eigen} and \ref{sec:ortho}, it underlies the duality between the eigen- and ortho-structures of the system; in Section \ref{sec:SF}, it inspires new standard forms for the matrix $A$ (Theorem \ref{theo:std_forms}); in Section \ref{sec:BoundingReactivity}, we use it to quantify the maximal amplification possible in the system (Theorems \ref{theo:MaxAmpUpperBound} and \ref{theo:MaxAmpFormula}); and in Section \ref{sec:nonaut}, it shows us how to `surf' the reactivity to infinity in nonautonomous systems (Theorem \ref{theo:non-aut_result}).
%%%%%%%%%%%%%%%%%%%%%%%%%%%
Much of the work in this paper was inspired by Josi\'{c} and Rosenbaum \cite{josic2008unstable}, who depict the reactive region in several of their figures, although they do not give it a name.
%%%%%%%%%%%%%%%%%%%%%%%%%%%
\par 
%%%%%%%%%%%%%%%%%%%%%%%%%%%
\begin{figure}
    \centering
    \includegraphics[width=\textwidth]{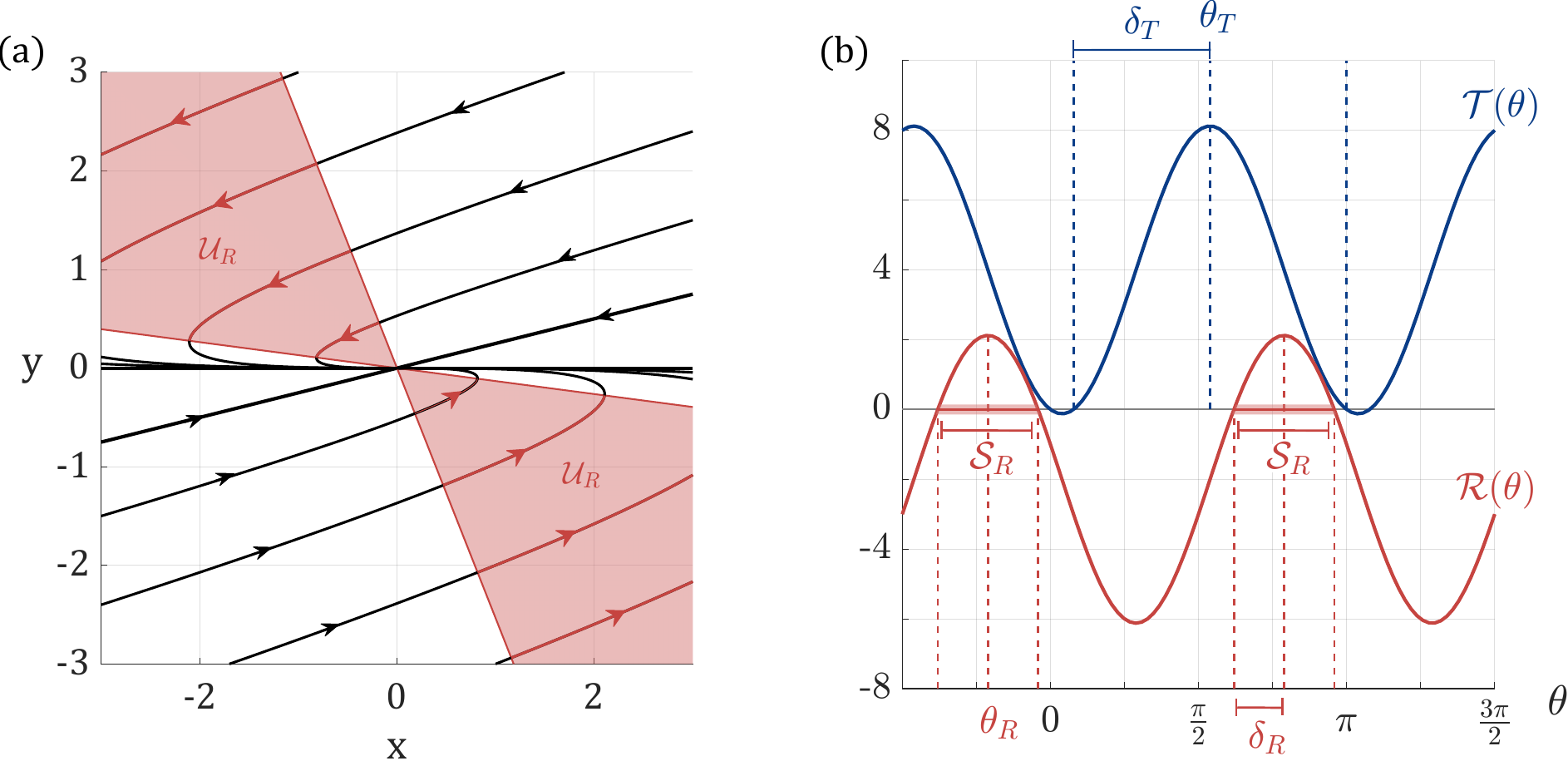}
    \caption{\textbf{(a)} The phase portrait for System \eqref{eqn:ODE} with coefficient matrix
    $A=\left(\begin{smallmatrix} -1 & -8\\ 0 & -3\end{smallmatrix}\right)$, 
    with the reactive region $\Rreg$ highlighted in red.
    \textbf{(b)} The corresponding radial and tangential functions, $\Rad$ and $\Tang$, plotted in red and dark blue, respectively. The reactive set $\Rset$, where $\Rad(\theta)>0$ is marked in red.
    The zeros of $\Rad$ are symmetrically located at distance $\delta_R$ from $\theta_R$, so that $\Rset = (\theta_R -\delta_R, \theta_R + \delta_R)\in \R\Mod{\pi}$. Similarly, the zeros of $\Tang$ are symmetrically located at distance $\delta_T$ from $\theta_T$.
    }
    \label{fig:ReactiveRegion}
\end{figure}
%%%%%%%%%%%%%%%%%%%%%%%%%%%
\par 
%%%%%%%%%%%%%%%%%%%%%%%%%%%
\begin{defn}
\label{def:ReactiveRegion}
The reactive set, $\Rset$, and attenuating set, $\Aset$, of System \eqref{eqn:ODE} are defined by:
\begin{align*}
\Rset = \left \{\theta\in [0,2\pi) \ | \ \Rad(\theta) > 0 \right \},\ \ 
\Aset = \left \{\theta\in [0,2\pi) \ | \ \Rad(\theta) < 0 \right \}.
\end{align*}
The reactive region, $\Rreg$, and attenuating region, $\Areg$, are the cones defined by the reactive and attenuating sets, respectively: 
\begin{align*}
\Rreg = \left \{(r,\theta)\in \R^2 \ | \ r >0, \ \theta \in \Rset \right \}, \ \ 
\Areg = \left \{(r,\theta)\in \R^2 \ | \ r >0, \ \theta \in \Aset \right \}.
\end{align*}
For $X \in \R^2$, we say System \eqref{eqn:ODE} is reactive at $X$ if $X \in \Rreg$, and attenuating at $X$ if $X \in \Areg$. 
\end{defn}
%%%%%%%%%%%%%%%%%%%%%%%%%%%
\par 
%%%%%%%%%%%%%%%%%%%%%%%%%%%
\Remark
The subscript $C$ is chosen for the attenuating region to denote the contracting nature of attenuation.   
%%%%%%%%%%%%%%%%%%%%%%%%%%%
\par \medskip \noindent  
%%%%%%%%%%%%%%%%%%%%%%%%%%%
\textbf{Boundary of the reactive and attenuating regions.}
If $\Rad$ is not identically zero, then $\Rset$ and $\Aset$ share the same (possibly empty) boundary, given by the zeros of $\Rad$: 
$\partial \Rset = \partial \Aset = \{ \theta \in [0,2\pi) \ | \ \Rad(\theta)=0 \}$, where $\partial S$ denotes the boundary of set $S$. Similarly, $\partial \Rreg = \partial \Areg = \{ (r,\theta) \in [0,\infty) \times [0,2\pi) \ | \ \Rad(\theta)=0 \}$. 
%%%%%%%%%%%%%%%%%%%%%%%%%%%
\par
%%%%%%%%%%%%%%%%%%%%%%%%%%%
In Definition \ref{defn:deltas}, we leverage the sinusoidal structure of $\Rad$ once again, namely the fact that zeros are symmetrically located around the maximizing angle, $\theta_R$, to define the reactivity radius, $\delta_R$. See Figure \ref{fig:ReactiveRegion}. We also define an analogous quantity $\delta_T$, for $\Tang$, whose relevance for the eigenstructure of $A$ will be made clear in Section \ref{sec:eigen}.
%%%%%%%%%%%%%%%%%%%%%%%%%%%
\par
%%%%%%%%%%%%%%%%%%%%%%%%%%%
\begin{defn}
\label{defn:deltas}
If $\Rad\not\equiv 0$ has any zeroes, we define
the reactivity radius,
$\delta_R \geq 0$, by 
\begin{align*}
&\Rad(\theta_R-\delta_R)  =\Rad(\theta_R+\delta_R)=0 \\
\mbox{ and } \ &\Rad(\theta)>0 \mbox{ on } (\theta_R-\delta_R,\theta_R+\delta_R)  \subset   \R\Mod{\pi},
\end{align*}
so that $\Rset = (\theta_R-\delta_R, \theta_R+\delta_R)\subset \R\Mod{\pi}$.
Similarly, if $\Tang\not\equiv 0$ has any zeroes, they are symmetrically located around $\theta_T$, and we define $\delta_T \geq 0$ by 
\begin{align*}
&\Tang(\theta_T-\delta_T) =\Tang(\theta_T+\delta_T)=0 \\
\mbox{ and } \ &\Tang(\theta)>0 \mbox{ on } (\theta_T-\delta_T,\theta_T+\delta_T) \subset \R\Mod{\pi}.
\end{align*}
\end{defn}
%%%%%%%%%%%%%%%%%%%%%%%%%%%
\par 
%%%%%%%%%%%%%%%%%%%%%%%%%%%
\Remark
Looking ahead, Corollary \ref{cor:delta_R_T} derives simple formulae for $\delta_R$ and $\delta_T$ using the standard matrix forms introduced in Section \ref{sec:SF}.
%%%%%%%%%%%%%%%%%%%%%%%%%%%
\par \medskip 
%%%%%%%%%%%%%%%%%%%%%%%%%%%
The reason the reactive and attenuating regions are key to so many of the results in the paper is because they clarify the way trajectories of System \eqref{eqn:ODE} accumulate (or lose) radial amplification over time. As illustrated in Figures \ref{fig:ReactiveRegion} and \ref{fig:mT-decrease-series2}, the modulus, $r(t)$, of a trajectory increases as it passes through the reactive region (marked in red), and decreases as it pass through the attenuating region. This means, for example, that if $A$ has a positive (negative) real eigenvalue, the corresponding eigenvector lies in the reactive (attenuating) region.
The angular movement of trajectories through the reactive and attenuating regions is governed by $\Tang(\theta)$.
In the next section we describe how to read more features of the eigenstructure of $A$ from $\Rad$ and $\Tang$. 

\section{Eigenvalues and eigenvectors through the lens of  \texorpdfstring{$\Rad$}{R} and \texorpdfstring{$\Tang$}{T}}
\label{sec:eigen}
%%%%%%%%%%%%%%%%%%%%%%%%%%%
In Theorem \ref{theo:eigenstructure}, we show how $\Rad$ and $\Tang$ elegantly capture the eigenvectors and eigenvalues of $A$. This paves the way for the fourth cornerstone of the paper: a dual structure of orthovectors and orthovalues of $A$, defined in Section \ref{sec:ortho}.
In Corollary \ref{cor:eigenvalues}, we develop simple criteria, in terms of the parameters $m_T$ and $p$ of $\Tang$, for real or complex eigenvalues. Then, in Proposition \ref{prop:sym}, we confirm that symmetry of the matrix $A$ rules out reactive attractors and attenuating repellers.
%%%%%%%%%%%%%%%%
\begin{theo}
\label{theo:eigenstructure} 
Given a real $2\times2$ matrix, $A$, with radial and tangential functions $\Rad$ and $\Tang$, and with $m_R, m_T, p, \theta_T, \delta_T$ and $\tau_i$ as summarized in Table \ref{tab:notation},
\vspace{0.5em}
\begin{enumerate}[label=(\roman*),labelindent=0.9\parindent,labelsep=1pt,align=left,leftmargin=*]
\item
The vector $V= \begin{pmatrix} \cos\theta \\ \sin\theta \end{pmatrix} \in S^1$ is an eigenvector of $A$ with corresponding eigenvalue $\lambda=\Rad(\theta)$ if and only if $\Tang(\theta)=0$.
\item
If $A$ has real eigenvalues and $\Tang\not\equiv 0$, then the eigenvectors are at 
\begin{equation*}
 \theta_1 = \theta_T+\delta_T \Mod{\pi} \ \mbox{ and } \ \theta_2 = \theta_T -\delta_T \Mod{\pi},   
\end{equation*}
with corresponding eigenvalues $\lambda_i=\Rad(\theta_i)$. 
\item
The eigenvalues of $A$, whether real or complex, are given by
\begin{equation*}
\lambda_1 = m_R + p_R \ \mbox{ and } \ \lambda_2 = m_R - p_R,
\end{equation*}
where $p_R$ is given by
\begin{equation}\label{eqn:pR-from-taus}
    p_R = \sqrt{p^2-m_T^2} = \sqrt{-\tau_1\tau_2}.
\end{equation}
\end{enumerate}
\end{theo}
%%%%%%%%%%%%%%%%%%
\begin{proof}
For part $(i)$, by Equation \eqref{eqn:decompose}, we have
\begin{equation*}
\Tang(\theta)=0 \iff AV = \Rad(\theta)V+\Tang(\theta)V^\perp = \Rad(\theta)V,  
\end{equation*}
and the result follows. Part $(ii)$ follows immediately from part $(i)$ and the definition of $\delta_T$ (Definition \ref{defn:deltas}).
%%%%%%%%%%%%%%%%%%%%%%%%%%%
For part $(iii)$, we use the characteristic polynomial to find the eigenvalues of $A$. Recall, from Proposition \ref{prop:RadTang-to-A}, that $A$ can be written in terms of $\Rad$ and $\Tang$ as follows:
\begin{equation*}
A=\begin{pmatrix} \Rad(0) & -\Tang(\pi/2) \\ \Tang(0) & \Rad(\pi/2) \end{pmatrix}
\end{equation*}
Using equations \eqref{eqn:Rad} and \eqref{eqn:Tang}, a straightforward calculation shows that the trace and determinant of $A$ are given by 
    \begin{align*}
        \text{tr}(A) = 2 \,m_R \qquad \mbox{ and } \qquad \text{det}(A) = m_R^2 + m_T^2 - p^2.
    \end{align*}
    Thus, the characteristic polynomial of $A$ is given by
    \begin{align*}
        \lambda^2 - 2m_R\lambda + m_R^2 + m_T^2 - p^2 = 0,
    \end{align*}
    yielding roots
    \begin{align}
        \lambda_{1,2} &= m_R \pm \sqrt{p^2-m_T^2} \label{eqn:pR-from-p-mT}\\
        & = m_R \pm \sqrt{-(m_T+p)(m_T-p)} = m_R \pm \sqrt{-\tau_1\tau_2} = m_R \pm p_R.\notag
    \end{align}
Finally, the direction of the $\pi/4$ phase shift between $\Rad$ and $\Tang$ ensures that, when the eigenvalues are real, the subscripts are consistent so that $\lambda_i = \Rad(\theta_i)$ as in Figure \ref{fig:saddle}. 
\end{proof}
%%%%%%%%%%%%%%%%%%%
%%%%%%%%%%%%%%%%%%%
\begin{figure}[t!]
	\centering
    \includegraphics[width=\textwidth]{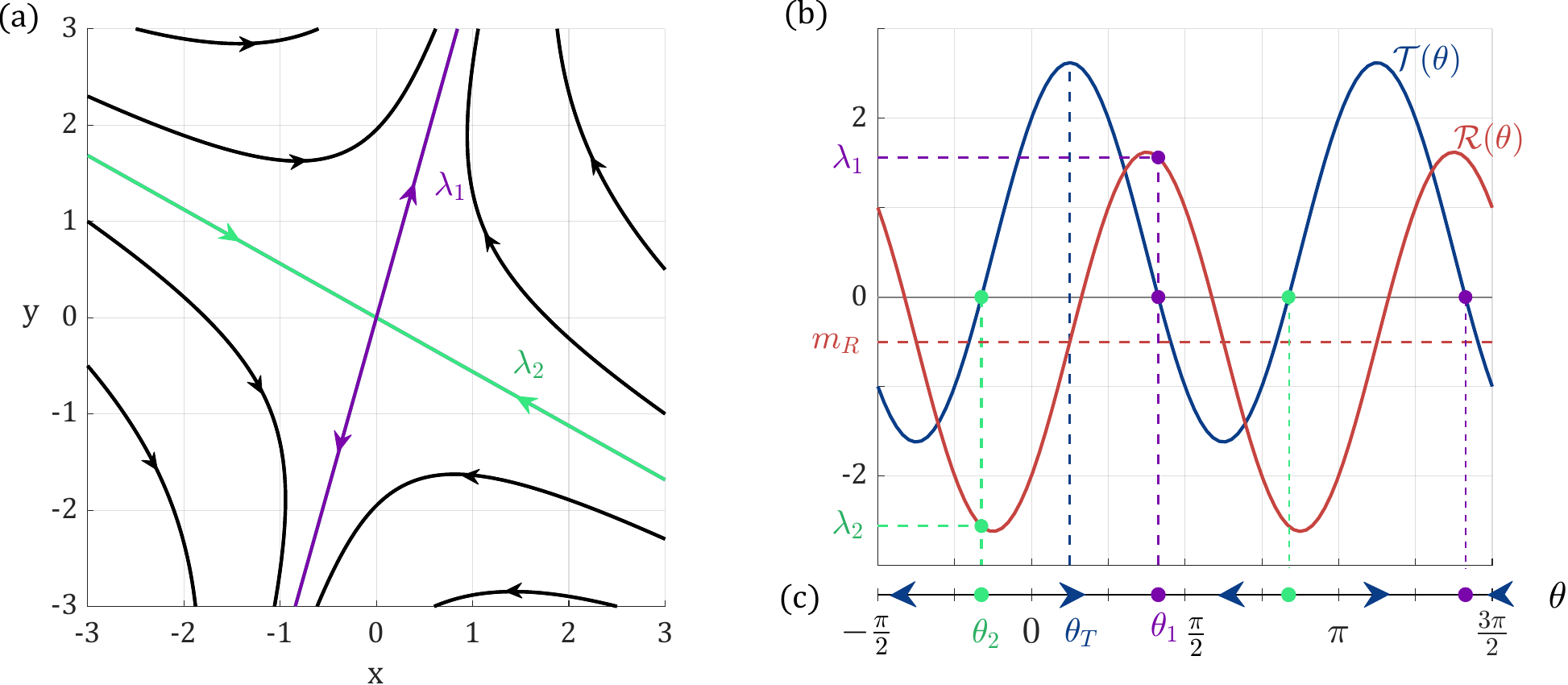}
    \setlength{\belowcaptionskip}{-5pt}
	\caption{\textbf{(a)} The phase portrait for 
    System \eqref{eqn:ODE}
    with coefficient matrix $A=\left(\begin{smallmatrix}-2&1\\2&1 \end{smallmatrix}\right)$ --- the same example as in Figure \ref{fig:RTdef}. 
    The origin is a saddle 
    equilibrium
    whose eigendirections
    are highlighted in dark purple (repelling direction) 
    and light green (attracting direction). 
    The associated eigenvalues are $\lambda_1>0$ and $\lambda_2<0$.
    \textbf{(b)} The radial and tangential 
    functions, $\Rad$ and $\Tang$,
    plotted in red and dark blue, respectively. 
    The eigendirections in (a) are represented by
    angles $\theta_1$ and $\theta_2$ in (b) and the 
    corresponding eigenvalues $\lambda_1$ and $\lambda_2$ 
    are given by $\Rad(\theta_1)$ and $\Rad(\theta_2)$, respectively. 
    \textbf{(c)} 
    The phase line of 
    the angular dynamics (Equation \eqref{eqn:dtheta}) which highlights how trajectories approach the eigendirection, $\theta_1$, of the larger eigenvalue
    in their long-term behavior.}
	\label{fig:saddle}
\end{figure}
%%%%%%%%%%%%%%%%%%%%%%%%%%%%
\par \medskip \noindent 
%%%%%%%%%%%%%%%%%%%%%%%%%%%%
\textbf{The case of distinct real eigenvalues.}
Recall that $\tau_1, \tau_2$ are the maximum and minimum values of $\Tang$ respectively. Thus, if $\tau_1 > 0 > \tau_2$, then $\Tang$ has distinct zeros, $p_R>0$, and the eigenvalues are real and distinct. 
Note also that $m_R=\Rad(\theta_T)$ by the $\pi/4$ phase shift between $\Rad$ and $\Tang$, and so the symmetric spacing of the eigenvalues, $\lambda_i$, relative to the midline $m_R$ of $\Rad$ reflects the symmetric spacing of the eigenvectors, $\theta_i$, relative to the maximizing angle, $\theta_T$ of $\Tang$. See Figure \ref{fig:saddle}b.
%%%%%%%%%%%%%%%%%%%%%%%%%%%
\par 
%%%%%%%%%%%%%%%%%%%%%%%%%%%
Figure \ref{fig:saddle}c illustrates that in the decoupled, one-dimensional angular dynamics of Equation \eqref{eqn:dtheta}, the eigendirection, $\theta_1$, associated with the larger eigenvalue, $\lambda_1$, is attracting (since  $\Tang'(\theta_1) < 0 < \Tang'(\theta_2)$, by the definition of $\delta_T$). This corresponds to the familiar asymptotic behavior in the phase plane, where trajectories eventually follow the weak stable direction when the origin attracts, the unstable direction when the origin is a saddle, and the strong unstable direction when the origin repels.
%%%%%%%%%%%%%%%%%%%%%%%%%%%
\par \medskip \noindent 
%%%%%%%%%%%%%%%%%%%%%%%%%%%
\textbf{The case of complex conjugate eigenvalues.}
If $\tau_1, \tau_2$ are the same sign, then $\Tang$ has no zeros, $p_R$ is imaginary, and there is a complex conjugate pair of eigenvalues In this case, the real part of the eigenvalues is given by the arithmetic mean, $m_R$, of the maximum and minimum radial velocities, while the imaginary part is given by the geometric mean, $\sqrt{\tau_1\tau_2}$, of the maximum and minimum angular velocities. Thus, in the phase plane, trajectories of System \eqref{eqn:ODE} move counter-clockwise ($\Tang>0$) or clockwise ($\Tang<0$), with angular period $2\pi/\sqrt{\tau_1\tau_2}$ (Figures \ref{fig:mT-decrease-series2}a and  \ref{fig:mR-decrease-series4}).
%%%%%%%%%%%%%%%%%%%%%%%%%%%
\par \medskip \noindent  
%%%%%%%%%%%%%%%%%%%%%%%%%%%
\textbf{Criteria for real versus complex conjugate eigenvalues.}
The following corollary of Theorem \ref{theo:eigenstructure} classifies the possible eigenvalues of $A$. The result leverages the fact that the relative sizes of the midline $m_T$ and amplitude $p$ of the sinusoidal function $\Tang$ easily determine whether or not $\Tang$ is single-signed.
%%%%%%%%%%%%%%%%%%%%%%%
\begin{figure}[b!]
    \centering
    \includegraphics[width=\textwidth]{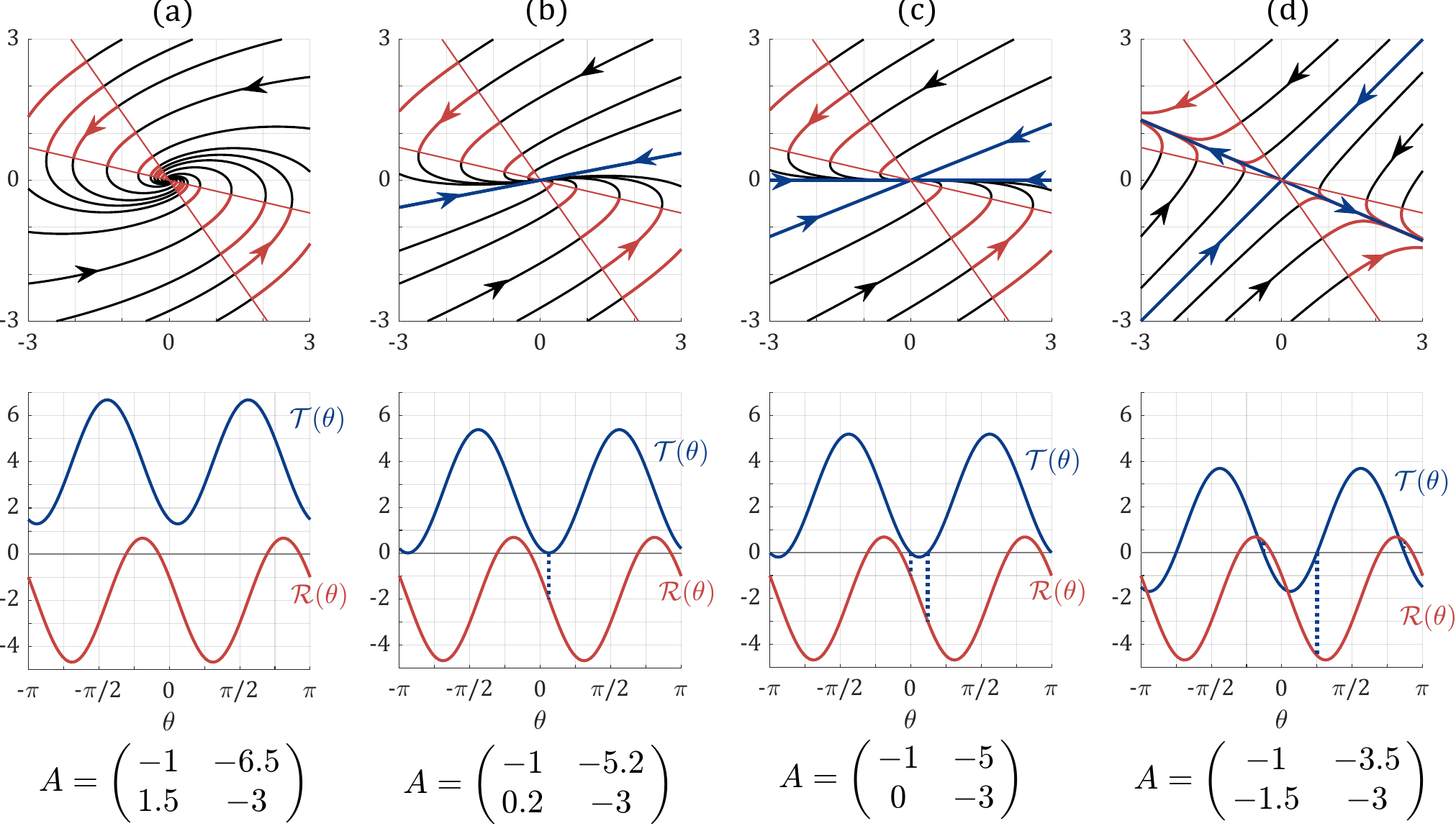}
    \caption{Phase portraits of System \eqref{eqn:ODE}, and corresponding graphs of $\Rad$ and $\Tang$, associated with the matrices shown. The series illustrates Theorem \ref{theo:eigenstructure} and Corollary \ref{cor:eigenvalues}: eigenvectors are given by the roots, $\theta_i$, of the tangential function $\Tang$, and have corresponding eigenvalues $\Rad(\theta_i)$. All four systems have the same radial function $\Rad$ and are reactive. The reactive region is shown in red on the phase plane.
    The difference between the systems is that  $\Tang$ is shifted down through the series, yielding different numbers of roots, and hence qualitatively different dynamics. 
    \textbf{(a)} $m_T>p$, $\Tang$ has no zeros, and so there are no real eigenvalues. Dynamically, trajectories rotate counter-clockwise (since $\Tang >0$) and spiral inward (since $m_R<0$). Along the way, they exhibit transient radial amplification as they pass through the reactive region, and so the system has a reactive spiral sink.
    \textbf{(b)} $m_T=p$, $\Tang$ has exactly one root (mod $\pi$), and there is a single negative eigenvalue.
    Dynamically, every trajectory not on the eigenline exhibits counter-clockwise rotation towards the eigenline as it approaches the origin. 
    \textbf{(c)} $m_T < p$, $\Tang$ has two roots (mod $\pi$), and there are distinct eigenvalues.  
    Since $\Tang$ just barely passes below the $\theta$-axis, the eigenvectors are close together, both eigenvalues are negative, and the system has a reactive attracting node. 
    \textbf{(d)} When $m_T$ is shifted down further, the roots of $\Tang$ get farther apart and one eigenvector passes into the reactive region. The corresponding eigenvalue is positive and the system has a saddle equilibrium.
    }
    \label{fig:mT-decrease-series2}
\end{figure}
%%%%%%%%%%%%%%%%%%%%%%%
%%%%%%%%%%%%%%%%%%%%%%%%%%%
\begin{cor}
\label{cor:eigenvalues} 
Given a real $2\times2$ matrix, $A$, with $\Rad, \Tang, m_R, m_T, p$ and $\tau_i$ as summarized in Table \ref{tab:notation},
\begin{enumerate}[label=(\roman*),labelindent=0.9\parindent,labelsep=1pt,align=left,leftmargin=*]
\item
$A$ has the complex conjugate pair of distinct eigenvalues, $\lambda_{1,2} = m_R \pm i\sqrt{\tau_1\tau_2}$, if and only if $p < |m_T|$.
\item
$A$ has the distinct real eigenvalues, $\lambda_{1,2} = \Rad(\theta_{1,2}) = m_R \pm \sqrt{-\tau_1\tau_2}$, if and only if $p > |m_T|$.
\item
$A$ has the single real eigenvalue, $\lambda=m_R$, with algebraic multiplicity 2 and geometric multiplicity  1 if and only if $p = |m_T| \neq 0$.
\item 
$A$ has the single real eigenvalue, $\lambda = m_R$, with algebraic and geometric multiplicity 2 if and only if $p = m_T = 0$. 
\end{enumerate}
\end{cor}
%%%%%%%%%%%%%%%%%%%%%%%%%%%
%%%%%%%%%%%%%%%%%%%%%%%%%%%
\begin{proof}
By the definition of $\Tang$ (Equation \eqref{eqn:Tang}), $\Tang$ is single-signed if and only if $p < |m_T|$, and has two zeros in $[0,\pi)$ if and only if $p > |m_T|$. In the case when $p = |m_T|$, $\Tang$ has a single zero in $[0,\pi)$ if and only if $p \neq 0$, and $\Tang \equiv 0$ if and only if $p = m_T = 0$. Thus, the corollary follows directly from Theorem \ref{theo:eigenstructure}. 
\end{proof}
%%%%%%%%%%%%%%%%%%%%%%%
The cases of Corollary \ref{cor:eigenvalues} are illustrated in Figures \ref{fig:mT-decrease-series2} and \ref{fig:mT-zero-series1}, where \ref{fig:mT-decrease-series2}a corresponds to case $(i)$, \ref{fig:mT-decrease-series2}c--d to case $(ii)$, \ref{fig:mT-decrease-series2}b to case $(iii)$, and \ref{fig:mT-zero-series1}a to case $(iv)$.
%%%%%%%%%%%%%%%%%%%%%%
\begin{figure}[t!]
	\centering
	\includegraphics[width=\textwidth]{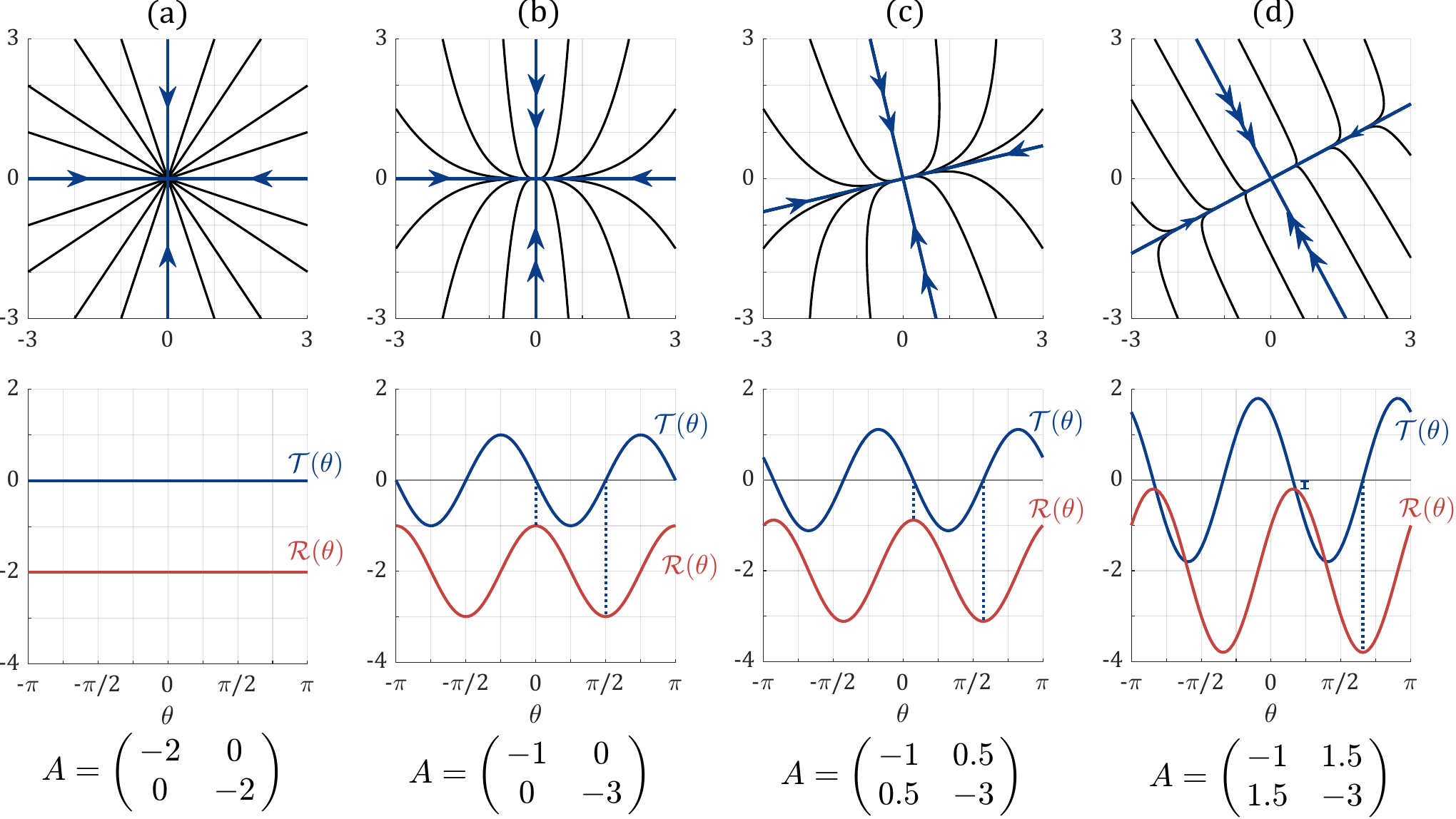}
	\caption{Phase portraits of System \eqref{eqn:ODE}, and corresponding graphs of $\Rad$ and $\Tang$, associated with the matrices shown. All four matrices are symmetric, illustrating Proposition \ref{prop:sym} and Lemma \ref{lem:sym}. 
		\textbf{(a)} $\Tang \equiv 0, \Rad \equiv -2$, and every vector is an eigenvector with eigenvalue $-2$. 
        In (b)--(d), $\Tang \not\equiv 0$ and Lemma \ref{lem:sym} applies.
		\textbf{(b)} $A$ is diagonal and has eigenvectors at the axes: $\Tang(0)=\Tang(\pi/2)=0$. 
		\textbf{(c)} The diagonal entries of $A$ are as in (b), but the off-diagonal entries are non-zero. This slightly increases the amplitude, $p$, and shifts the maximum of $\Rad$ to the right. The eigenvectors are still orthogonal but have been rotated off the coordinate axes.
		\textbf{(d)} The off-diagonal entries of $A$ are increased from (c), further increasing $p$, and further rotating the orthogonal eigenvectors. The increase in $p$ leads to an increase in eigenvalue separation seen on $\Rad$ and on the phase portrait.}
	\label{fig:mT-zero-series1}
\end{figure}
%%%%%%%%%%%%%%%%%%%%%%%%%%%
\par \medskip \noindent  
%%%%%%%%%%%%%%%%%%%%%%%%%%%
\textbf{Symmetric attractors are non-reactive.}
In Proposition \ref{prop:sym}, we consider the consequences of symmetry on reactivity and attenuation. See Figure \ref{fig:mT-zero-series1}.
%%%%%%%%%%%%%%%%%%%%%%%%%%%
%%%%%%%%%%%%%%%%%%%%%%%%%%%
\begin{prop}
\label{prop:sym} 
Given System \eqref{eqn:ODE}, if $A$ is symmetric, then the origin is neither a reactive attractor nor an attenuating repeller.
\end{prop}
%%%%%%%%%%%%%%%%%%%%%%%%%%
%%%%%%%%%%%%%%%%%%%%%%%%%%%
Proposition \ref{prop:sym} can, of course, be proven
from the solutions of System \eqref{eqn:ODE}. Here, we show how it follows from the properties of $\Rad$ and $\Tang$ by first proving Lemma \ref{lem:sym}, in which we recast the well-known result that symmetric matrices have real eigenvalues and orthogonal eigenvectors in terms of $\Rad$ and $\Tang$.
%%%%%%%%%%%%%%%%%%%%%%%%%%%%
%%%%%%%%%%%%%%%%%%%%%%%%%%%%
\begin{lem}
\label{lem:sym}
Given a real $2\times2$ matrix, $A$, with $\Tang, m_R, m_T, p, \theta_R$ and $\rho_i$ as summarized in Table \ref{tab:notation},
if $\Tang \not\equiv 0$, the following statements are equivalent:
\begin{enumerate}[label=(\roman*),labelindent=0.9\parindent,labelsep=1pt,align=left,leftmargin=*]
\item $A$ is symmetric.
\item $m_T=0$.
\item $A$ has real orthogonal eigenvectors.
\item $A$ has real eigenvalues $\lambda_1=\rho_1=m_R+p$ and $\lambda_2=\rho_2=m_R-p$.
\item $A$ has 
eigenvectors $V_1$ and $V_2$
at angles $\theta_R$ and $\theta_R \pm \frac{\pi}{2}$, respectively.
\end{enumerate}
\end{lem}
%%%%%%%%%%%%%%%%%%%%%
%%%%%%%%%%%%%%%%%%%%%%
\begin{proof}
$(i) \Leftrightarrow (ii)$ follows immediately from the definition of $m_T$ (Equation \eqref{eqn:m_T}).
By Equation \eqref{eqn:Tang}, $m_T$ is the midline of $\Tang$. Since $\Tang$ has period $\pi$, the zeroes of $\Tang$ lie on the midline of $\Tang$ (i.e. $m_T=0$) if and only if they are angle $\frac{\pi}{2}$ apart. Thus, $(ii) \Leftrightarrow (iii)$ follows from Theorem \ref{theo:eigenstructure}.
Moreover, the $\frac{\pi}{4}$ phase shift between $\Tang$ and $\Rad$ ensures that $\Rad$ reaches its maximum and minimum values, $\rho_1$ and $\rho_2$, respectively, exactly when $\Tang$ is at its midline. Thus, $(ii) \Leftrightarrow (iv)$ and
$(iv) \Leftrightarrow (v)$ follow from Corollary \ref{cor:sinusoidal} and Theorem \ref{theo:eigenstructure}.
\end{proof}
%%%%%%%%%%%%%%%%%%%%%%%%%%
%%%%%%%%%%%%%%%%%%%%%%%%%%
\begin{proof}[Proof of Proposition \ref{prop:sym}]
By Lemma \ref{lem:sym}, if $A$ is symmetric, then 
the reactivity $\rho_1$ and the attenuation $\rho_2$ are eigenvalues of $A$. 
If System \eqref{eqn:ODE} is reactive, then, by Theorem \ref{theo:reactivity-criteria}, it has reactivity $\rho_1 > 0$, and so the origin is not an attractor. Similarly, if System \eqref{eqn:ODE} is attenuating, then it has attenuation $\rho_2 < 0$, and the origin is not a repeller. 
\end{proof}
%%%%%%%%%%%%%%%%%%%%%%%%%%%
Taken together, Proposition \ref{prop:sym} and Lemma \ref{lem:sym} show that if $A$ has orthogonal real eigenvectors, then System \eqref{eqn:ODE} has neither a reactive attractor nor an attenuating repeller, reinforcing the observation that the process of diagonalization masks transient behavior \cite{higham1993stiffness}. By contrast, in Section \ref{sec:BoundingReactivity} we show that any linearly independent, non-orthogonal pair of vectors can be the eigenvectors of a reactive attractor
with any reactivity (and of an attenuating repeller
with any attenuation). 
%%%%%%%%%%%%%%%%%%%%%%%%%%%
\par \medskip \noindent  
%%%%%%%%%%%%%%%%%%%%%%%%%%%
\textbf{Radial and tangential functions of normal matrices.}
A matrix is said to be \textit{normal} if it commutes with its conjugate transpose \cite{grone1987normal}. For real $2\times2$ matrices, normal matrices are either symmetric or have the form $A = aI + bJ$, where $a, b \in \mathbb{R}$, $I$ is the identity matrix, and $J$ is the rotation matrix defined in Notation \ref{notn:J}. Proposition \ref{prop:sym} and Lemma \ref{lem:sym} show that if $A$ is symmetric, then $m_T=0$ and System \eqref{eqn:ODE} does not have a reactive attractor. 
In the case when $A$ has the form $A = aI + bJ$, the radial and tangential functions of System \eqref{eqn:ODE} are both constant: $\Rad(\theta) = a$ and $\Tang(\theta) = b$.  Thus, $\Rad$ and $\Tang$ have amplitude $p = 0$, and
part $(iv)$ of Theorem \ref{theo:reactivity-criteria} confirms that System \eqref{eqn:ODE} does not admit reactive attractors (nor attenuating repellers) when $A$ is normal.
See \cite{trefethen2020spectra} for a rich analysis of normal and non-normal systems.

\section{Reactivity through the lens of orthovalues and orthovectors}
\label{sec:ortho}
%%%%%%%%%%%%%%%%%%%%%%%%%%%
In this section, we introduce the fourth cornerstone of the paper: a structure of \textit{orthovalues} and \textit{orthovectors}, dual to the eigenstructure.
To motivate the orthostructure, we recall the connection between the eigenstructure of a matrix $A$ and the radial and tangential decomposition of the vector field $AV$.
%%%%%%%%%%%%%%%%%%%%%%%%%%%
Writing $V=r\left(\begin{smallmatrix}
    \cos\phi\\\sin\phi
\end{smallmatrix}\right)$, we have, from Theorem \ref{theo:RadTang},
\[
AV = \Rad(\phi)V + \Tang(\phi)V^\perp.
\]
%%%%%%%%%%%%%%%%%%%%%%%%%%%
By Theorem \ref{theo:eigenstructure}, $V$ is an eigenvector of $A$ with eigenvalue $\Rad(\phi)$ if and only if $\Tang(\phi) = 0$. In this section, we investigate what happens when $\Rad(\phi) = 0$, which occurs when $AV$ is  orthogonal to $V$. 
%%%%%%%%%%%%%%%%%%%%%%%%%%%%%%%%%%%%
\begin{defn}
\label{def:ortho}
Given a real $2\times2$ matrix, $A$,
we call $V \in \R^2$ an orthovector of $A$ with corresponding orthovalue $\mu$ if and only if
$AV = \mu V^\perp$
and $V \neq 0$. 
\end{defn}
%%%%%%%%%%%%%%%%%%%%%%%%%%%%%%%%%%%%
\par 
%%%%%%%%%%%%%%%%%%%%%%%%%%%%%%%%%%%%
\Remark
The term orthostructure refers to the pairs of orthovectors and orthovalues of $A$. 
%%%%%%%%%%%%%%%%%%%%%%%%%%%
\par \medskip 
%%%%%%%%%%%%%%%%%%%%%%%%%%%
In Theorem \ref{theo:ortho-dualcalc}, we highlight one aspect of the duality between eigenstructure and orthostructure, using Definition \ref{def:ortho} and the 
rotation matrix $J$ (Notation \ref{notn:J}). In doing so, we show that computing orthovalues and orthovectors can be reduced to the computation of eigenvalues and eigenvectors. In Theorem \ref{theo:orthostructure}, in direct analogy with Theorem \ref{theo:eigenstructure}, we highlight a different aspect of the duality between eigenstructure and orthostructure, deriving an approach for computing orthovalues and orthovectors directly from the radial and tangential functions $\Rad$ and $\Tang$. 
In both of Theorems \ref{theo:ortho-dualcalc} and \ref{theo:orthostructure}, we make use of the fact that multiplying $A$ by either $J$ or $-J$ results in an exchange of radial and tangential dynamics in System \eqref{eqn:ODE}; see Figure \ref{fig:duality}.
%%%%%%%%%%%%%%%%%%%%%%%%%%%%%%%%%%%%
%%%%%%%%%%%%%%%%%%%%%%%%%%%%%%%%%%%%
\begin{theo}
\label{theo:ortho-dualcalc} 
Given a real $2\times2$ matrix, $A$,
$V \in \R^2$ is an orthovector of $A$ with corresponding orthovalue $\mu$ if and only if $V$ is an eigenvector of $-JA$ with corresponding eigenvalue $\mu$.
\end{theo}
%%%%%%%%%%%%%%%%%%%%%%%%%%%%%%%%%%%%
%%%%%%%%%%%%%%%%%%%%%%%%%%%%%%%%%%%%
\begin{proof}
$V \in \R^2$ is an eigenvector of $-JA$ with corresponding eigenvalue $\mu$ if and only if 
\begin{align*}
-JAV = \mu V
&\iff -J^2AV = \mu JV\\
&\iff AV = \mu V^\perp,
\end{align*}
as $-J^2 = I$ and $JV = V^\perp $.
\end{proof}
%%%%%%%%%%%%%%%%%%%%%%%%%%%%%%%%%%%%
%%%%%%%%%%%%%%%%%%%%%%%%%%%%%%%%%%%%
\begin{figure}
    \centering
    \includegraphics[width=\textwidth]{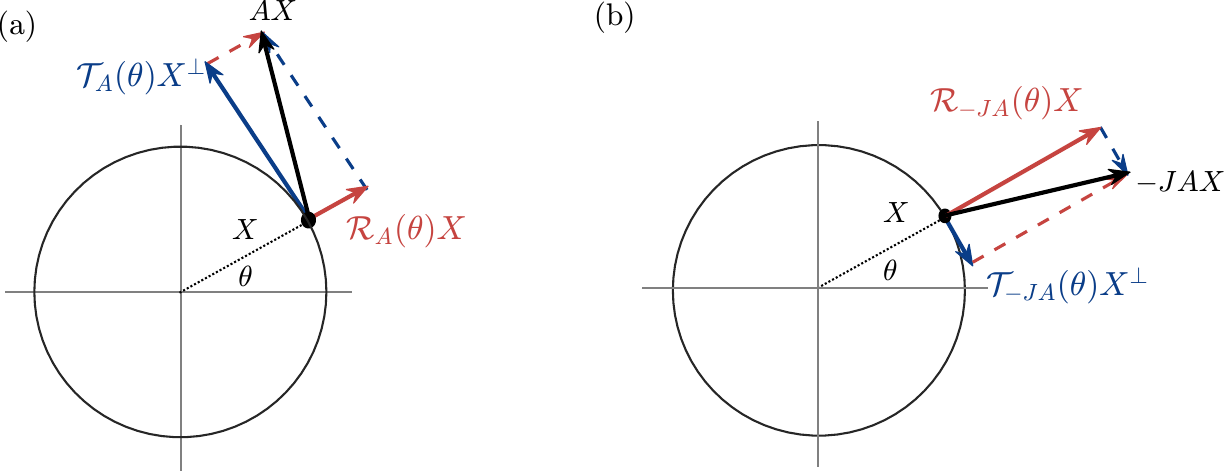}
    \caption{The radial and tangential decompositions of  \textbf{(a)} the vector field $AX$, and \textbf{(b)} the vector field $-JAX$. The tangential component, $\Tang_A(\theta)X^\perp$, of $AX$ is rotated clockwise by $-J$ to $\Rad_{-JA}(\theta)X$, while  $\Rad_A(\theta)X$ is rotated to $\Tang_{-JA}(\theta)X^\perp$. Note that, by the definition of $X^\perp$ in Notation \ref{notn:J}, $\Tang_{-JA}(\theta) < 0$ in this example, so that  
    $\Tang_{-JA}(\theta) = -\Rad_A(\theta)$ and 
    $\Rad_{-JA}(\theta) = \Tang_A(\theta)$,
    illustrating Theorem \ref{theo:ortho-dualcalc} and Equation \eqref{eqn:duality}.
    }
    \label{fig:duality}
\end{figure}
%%%%%%%%%%%%%%%%%%%%%%%%%%%%%%%%%%%%
%%%%%%%%%%%%%%%%%%%%%%%%%%%%%%%%%%%%
We now begin to uncover the relationship between orthostructure and reactivity. Comparing Definition \ref{def:ortho} with Definition \ref{def:ReactiveRegion}, we note that the boundary of the reactive set $S_R$ is determined by the zeros of the radial function $\Rad$.
In Theorem \ref{theo:orthostructure}, we prove that the orthovectors of $A$ determine the boundaries of the reactive region $\mathcal{U}_R$; we also establish formulae for computing orthovalues and orthovectors directly  from our radial and tangential functions.
In Section \ref{sec:SF}, we use these results to guide our construction of standard matrix forms for assessing reactivity. To streamline the exposition, we introduce the following notation for radial and tangential functions associated with a given matrix.
%%%%%%%%%%%%%%%%%%%%%%%%%%%%%%%%%%%%
\begin{notn}
    The expressions $\Rad_M(\theta)$ and $\Tang_M(\theta)$ denote the radial and tangential functions  corresponding to System \eqref{eqn:ODE} with coefficient matrix $M$. Similar subscripts are occasionally appended to parameters of $\Rad$ and $\Tang$ (e.g.\ $(m_R)_M$ and $(\tau_i)_M$) to avoid ambiguity. When the coefficient matrix is clear from the context, the subscript is omitted. 
\end{notn}
%%%%%%%%%%%%%%%%%%%%%%%%%%%%%%%%%%%%
\par 
%%%%%%%%%%%%%%%%%%%%%%%%%%%%%%%%%%%%
\begin{theo}
\label{theo:orthostructure}
Given a real $2 \times 2$ matrix, $A$, with $\Rad, \Tang, m_R, m_T, p, \theta_R, \delta_R$ and $\rho_i$ as summarized in Table \ref{tab:notation},
\begin{enumerate}[label=(\roman*),labelindent=0.9\parindent,labelsep=1pt,align=left,leftmargin=*]
\item
The vector $V= \begin{pmatrix} \cos\phi \\ \sin\phi \end{pmatrix} \in S^1$ is an orthovector of $A$ with corresponding orthovalue $\mu=\Tang(\phi)$ if and only if $\Rad(\phi)=0$.
\item
If $A$ has real orthovalues and $\Rad\not\equiv 0$, the orthovectors are at 
\begin{equation*}
    \phi_1 = \theta_R-\delta_R \Mod{\pi} \ \mbox{ and } \ \phi_2 = \theta_R +\delta_R \Mod{\pi},
\end{equation*} 
with corresponding orthovalues $\mu_i=\Tang(\phi_i)$. 
\item
The orthovalues of $A$, whether real or complex, are given by 
\begin{equation*}
    \mu_1 = m_T + p_T \ \mbox{ and } \ \mu_2 = m_T - p_T, 
\end{equation*}
where $p_T$ is given by
\begin{equation}\label{eqn:pT-from-taus}
    p_T = \sqrt{p^2-m_R^2} = \sqrt{-\rho_1\rho_2}
\end{equation}
\end{enumerate}
\end{theo}
%%%%%%%%%%%%%%%%%%%%%%%%%%%%%%%%%%%%
%%%%%%%%%%%%%%%%%%%%%%%%%%%%%%%%%%%%
\begin{proof}
The proofs of parts $(i)$ and $(ii)$ are analogous to those of Theorem \ref{theo:eigenstructure} parts $(i)$ and $(ii)$.
%%%%%%%%%%%%%%%%%%%%%%%
For part $(i)$, by Equation \eqref{eqn:decompose}, we have
\begin{equation*}
\Rad(\phi)=0 \iff AV = \Rad(\phi)V+\Tang(\phi)V^\perp = \Tang(\phi)V^\perp
\end{equation*}
and the result follows from Definition \ref{def:ortho}. 
%%%%%%%%%%%%%%%%%%%%%%%
Part $(ii)$ follows immediately from part $(i)$ and the definition of $\delta_R$ (Definition \ref{defn:deltas}).
%%%%%%%%%%%%%%%%%%%%%%%
\par 
%%%%%%%%%%%%%%%%%%%%%%%%
We know, from Theorem \ref{theo:ortho-dualcalc}, that the orthovalues of $A$ are the eigenvalues of $-JA$. Thus, part $(iii)$ of Theorem \ref{theo:orthostructure} is inherited from part $(iii)$ of Theorem \ref{theo:eigenstructure} as follows: 
For $X \in \R^2$, we apply Theorem \ref{theo:RadTang} to decompose the vector field $(-JA)X$ into radial and tangential components, yielding
%%%%%%%%%%%%%%%%%%%%%%%
\begin{equation}\label{eqn:ortho1}
(-JA)X = \Rad_{-JA}(\theta)X+\Tang_{-JA}(\theta)X^\perp.
\end{equation}
%%%%%%%%%%%%%%%%%%%%%%%
Simultaneously, decomposing only $AX$, we also have 
\begin{align}
-J(AX)  & =  -J \left(\Rad_A(\theta)X+\Tang_A(\theta)X^\perp\right) \nonumber\\
 & =  \Rad_A(\theta)(-JX)+\Tang_A(\theta)(-J(JX))  \nonumber\\ 
 & =  -\Rad_A(\theta)X^\perp + \Tang_A(\theta)X, \label{eqn:ortho2}
\end{align}
where we have used the facts that $X^\perp = JX$ and $-J^2 = I$. Equating the coefficients of $X$ and $X^\perp$ in Equations \eqref{eqn:ortho1} and \eqref{eqn:ortho2}, we deduce that
%%%%%%%%%%%%%%%%%%%%%%%
\begin{equation}
\label{eqn:duality}
\Rad_{-JA}(\theta) = \Tang_A(\theta) \  \mbox{ and } \ 
\Tang_{-JA}(\theta) = -\Rad_A(\theta).
\end{equation}
%%%%%%%%%%%%%%%%%%%%%
\par
%%%%%%%%%%%%%%%%%%%%%
By Theorem \ref{theo:eigenstructure}, the eigenvalues of $-JA$, and hence the orthovalues of $A$, are given by: 
\[
(m_R)_{(-JA)} \pm \sqrt{-(\tau_1)_{(-JA)} (\tau_2)_{(-JA)}}
\]
But, by equations \eqref{eqn:duality}, 
\[
(m_R)_{(-JA)}=(m_T)_A, \ \ (\tau_1)_{(-JA)}= -(\rho_2)_A, \ \ \mbox{and} \ \ (\tau_2)_{(-JA)} = -(\rho_1)_A.
\]
The result follows.
\end{proof}
%%%%%%%%%%%%%%%%%%%%%%%%%%%%%%%%%%%%
\Remark
The choice to ensure $\lambda_1 \geq \lambda_2$ and $\mu_1 \geq \mu_2$, while $\lambda_i=\Rad(\theta_i)$ and $\mu_i=\Tang(\phi_i)$, leads to a slight variation in the definitions of $\phi_1,\phi_2$ in Theorem \ref{theo:orthostructure} compared with those of $\theta_1, \theta_2$ in Theorem \ref{theo:eigenstructure}. This is a consequence of the phase shift between $\Rad$ and $\Tang$.
%%%%%%%%%%%%%%%%%%%%%%%%%%%%%%%%%%%%
%%%%%%%%%%%%%%%%%%%%%%%%%%%%%%%%%%%%
\begin{figure}[t!]
    \centering
    \includegraphics[width=\textwidth]{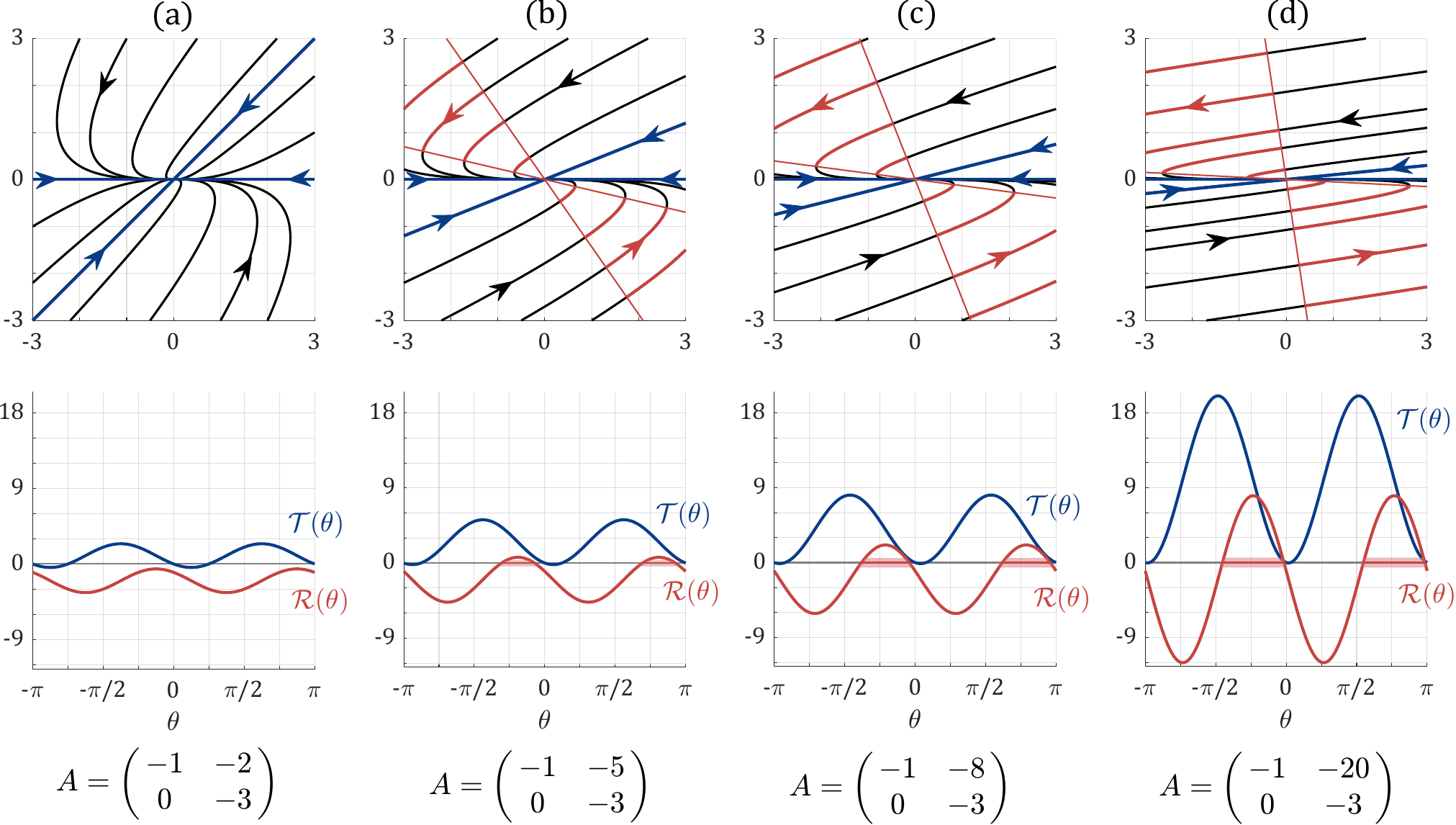}
    \setlength{\belowcaptionskip}{-4pt}
    \caption{
    Phase portraits of System \eqref{eqn:ODE}, and corresponding graphs of $\Rad$ and $\Tang$, associated with the matrices shown. 
    In all four examples, the eigenvalues and the midline $m_R$ are held constant. As the off-diagonal entry in $A$ increases in magnitude through the series, the amplitude $p$ increases, leading to increasingly reactive dynamics in which the eigenvectors converge ($\delta_T$ decreases), a reactive region emerges and then expands ($\delta_R$ increases), and the reactivity $\rho_1$ increases.
    For cross-referencing: the system in (b) is the same as in Figures \ref{fig:mT-decrease-series2}c and \ref{fig:std_forms}, and the system in (c) is the same as in Figures \ref{fig:bullseye}, \ref{fig:ReactiveRegion} and \ref{fig:MaxAmpUpperBound}.}
    \label{fig:reac-increase-series3}
\end{figure}
%%%%%%%%%%%%%%%%%%%%%%%%%%%%%%%%%%%%
\par \medskip \noindent 
%%%%%%%%%%%%%%%%%%%%%%%%%%%%%%%%%%%%
\textbf{Criteria for reactivity.}
The eigenstructure of $A$ provides insight into the asymptotic behavior of solutions to System \eqref{eqn:ODE}. The orthostructure, on the other hand, provides insight into the transient behavior of solutions. Since the functions $\Rad$ and $\Tang$ capture both the eigenstructure and orthostructure, they offer a unified approach to conducting asymptotic and transient analysis. 
In Corollary \ref{cor:eigenvalues}, we classified the eigenvalues, and hence the asymptotic stability of System \eqref{eqn:ODE}, using just the parameters $m_T$ and $p$ of $\Tang$. In direct analogy with this, in Corollary \ref{cor:ortho-char} we classify the orthovalues, and hence the transient behavior of System \eqref{eqn:ODE}, using the parameters $m_R$ and $p$ of $\Rad$.
%%%%%%%%%%%%%%%%%%%%%%%%%%%%%%%%%%%%
%%%%%%%%%%%%%%%%%%%%%%%%%%%%%%%%%%%%
\begin{cor}
\label{cor:ortho-char}
Given a real $2 \times 2$ matrix, A, with $\Rad, \Tang, m_R, m_T, p$ and $\rho_i$ as summarized in Table \ref{tab:notation},
\begin{enumerate}[label=(\roman*),labelindent=0.9\parindent,labelsep=1pt,align=left,leftmargin=*]
    \item 
    $A$ has the complex conjugate pair of distinct orthovalues, $\mu_{1,2} = m_T \pm i\sqrt{\rho_1\rho_2}$, if and only if $p < |m_R|$.
    \item 
    $A$ has the distinct real orthovalues, $\mu_{1,2} = \Tang(\phi_{1,2}) = m_T \pm \sqrt{-\rho_1\rho_2}$, if and only if $p > |m_R|$.
    \item 
    $A$ has the single real orthovalue, $\mu=m_T$, with algebraic multiplicity 2 and geometric multiplicity 1 if and only if $p = |m_R| \neq 0$.
    \item 
    $A$ has the single real orthovalue, $\mu = m_T$, with algebraic and geometric multiplicity 2 if and only if $p = m_R = 0$. 
\end{enumerate}
\end{cor}
%%%%%%%%%%%%%%%%%%%%%%%%%%%%%%%%%%%%
%%%%%%%%%%%%%%%%%%%%%%%%%%%%%%%%%%%%
\begin{proof}
    The proof is analogous to that of Corollary \ref{cor:eigenvalues}, and follows from Equation \eqref{eqn:Rad} and Theorem \ref{theo:orthostructure}.
\end{proof}
%%%%%%%%%%%%%%%%%%%%%%%%%%%%%%%%%%%%
\par
%%%%%%%%%%%%%%%%%%%%%%%%%%%%%%%%%%%%
Dynamically, case $(i)$ in Corollary \ref{cor:ortho-char} corresponds to the case when $\Rad$ has no zeroes and System \eqref{eqn:ODE} is either purely reactive or purely attenuating. Thus, the origin is either a non-attenuating repeller or a non-reactive attractor 
(as in Figures \ref{fig:mT-zero-series1}, \ref{fig:reac-increase-series3}a and \ref{fig:mR-decrease-series4}d). In case $(ii)$, System \eqref{eqn:ODE} has both reactive and attenuating regions, so the origin is either a reactive attractor 
(as in Figures \ref{fig:reac-increase-series3}b-d and \ref{fig:mR-decrease-series4}c, among others),
an attenuating repeller, a non-circular center
(as in Figure \ref{fig:mR-decrease-series4}b),
or a saddle
(as in Figures \ref{fig:saddle} and \ref{fig:mT-decrease-series2}d).
Case $(iii)$ is the edge case between cases $(i)$ and $(ii)$, where System \eqref{eqn:ODE} still has a non-reactive attractor or a non-attenuating repeller, but also a single direction on which $\Rad=0$. Case $(iv)$ describes systems with circular centers and uniform angular velocity $m_T$ (as in Figure \ref{fig:mR-decrease-series4}a). 
%%%%%%%%%%%%%%%%%%%%%%%%%%%%%%%%%%%%
\par \medskip \noindent  
%%%%%%%%%%%%%%%%%%%%%%%%%%%%%%%%%%%%
\begin{figure}
    \centering
    \includegraphics[width=\textwidth]{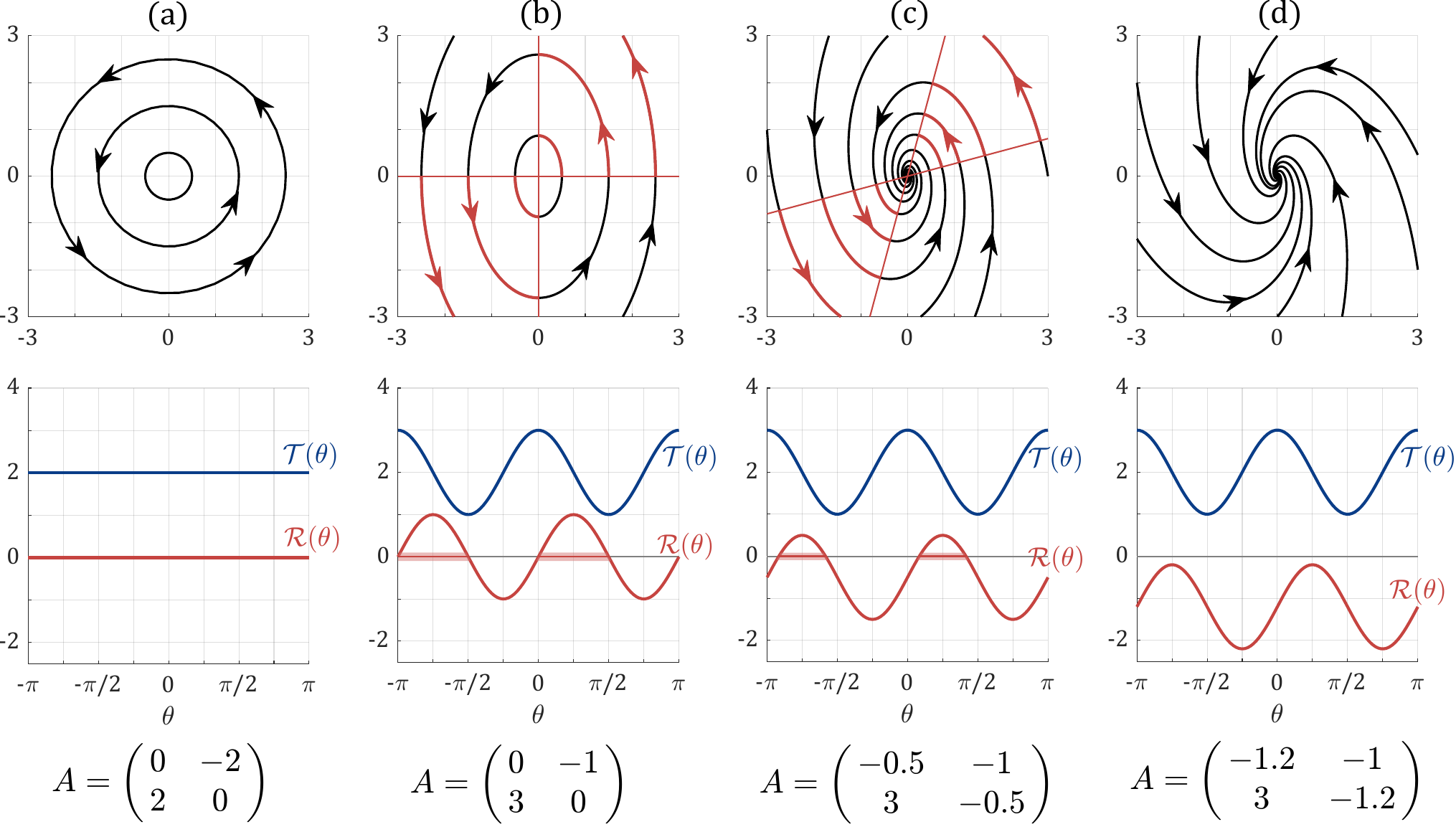}
    \setlength{\belowcaptionskip}{-4pt}
    \caption{
    Phase portraits of System \eqref{eqn:ODE}, and corresponding graphs of $\Rad$ and $\Tang$, associated with the matrices shown.
    In all four examples, $m_T=2$, and in (b)--(d), $\Tang$ is the same function.
    %%%%%%%%%%%%%%%%%%%%%%%%%%%%%%%%%%%%
     \textbf{(a)} $\Rad \equiv 0, \Tang \equiv 2$, and solutions to System \eqref{eqn:ODE} exhibit pure rotation with circular orbits, as in Corollary \ref{cor:ortho-char}$(iv)$.
     %%%%%%%%%%%%%%%%%%%%%%%%%%%%%%%%%%%%
     \textbf{(b)} 
     $m_R=0, p\ne 0$, and the orthovectors are orthogonal as in Proposition \ref{prop:balance} and Lemma \ref{lem:balance}. Since $\Tang>0$ in this example, the origin is a center, the orthovalues are positive, and solutions rotate counter-clockwise with elliptical orbits.
     %%%%%%%%%%%%%%%%%%%%%%%%%%%%%%%%%%%%
     \textbf{(c)} 
     $\Rad$ is shifted down from (b), so $m_R<0$ and the orthovectors are no longer orthogonal. Radial decay outweighs radial growth and the origin is a reactive spiral sink.
     %%%%%%%%%%%%%%%%%%%%%%%%%%%%%%%%%%%%
     \textbf{(d)} 
     When $\Rad$ is shifted down further, there are no real orthovectors or orthovalues, the system is no longer reactive ($\rho_1<0$), and the origin is a non-reactive spiral sink.
    }
    \label{fig:mR-decrease-series4}
\end{figure}
%%%%%%%%%%%%%%%%%%%%%%%%%%%%%%%%%%%%
\textbf{Orthogonal orthovectors balance reactivity and attenuation.}~
%%%%%%%%%%%%%%%%%%%%%%%%%%%%%%%%%%%% 
The relationship between the 
radial and tangential dynamics of the
systems $\frac{dX}{dt}=AX$ and 
$\frac{dX}{dt}=-JAX$
described in the proof of Theorem \ref{theo:orthostructure} generalizes the well-known
connection between two-dimensional Hamiltonian and gradient systems: if the system $\frac{dX}{dt}=AX$ is Hamiltonian, then the orthogonal systems 
$\frac{dX}{dt}=JAX$ and $\frac{dX}{dt}=-JAX$
will be gradient systems 
\cite{hirsch2012differential}. The exchange between tangential and radial dynamics, which we highlight 
in Theorems \ref{theo:ortho-dualcalc} and \ref{theo:orthostructure},
is analogous to the exchange
of trajectories that are tangent to level sets in a Hamiltonian system with
trajectories that are orthogonal to level sets in a gradient system. 
%%%%%%%%%%%%%%%%%%%%%%%%%%%
\par
%%%%%%%%%%%%%%%%%%%%%%%%%%%
For System \eqref{eqn:ODE} to be a gradient system, $A$ must be curl-free, equivalently symmetric, which excludes reactive attractors or attenuating repellers (Proposition \ref{prop:sym}). From the dual perspective, for a two-dimensional linear system to be Hamiltonian, it must be divergence-free; equivalently $\tr(A) = 0$. In Proposition \ref{prop:balance}, we use the $\Rad$ and $\Tang$ lens to show that, in this case, System \eqref{eqn:ODE} admits periodic orbits.
For intuition, we show in Lemma \ref{lem:balance} 
(analogous to Lemma \ref{lem:sym} for eigenvectors), 
that, provided $\Rad \not\equiv 0$, if $\tr(A) = 0$, then the orthovectors of $A$ are orthogonal. Figure \ref{fig:mR-decrease-series4}b illustrates how orthogonal orthovectors equally split $\R^2$ into reactive and attenuating regions, resulting in a perfect balance between radial growth and radial decay. 
%%%%%%%%%%%%%%%%%%%%%%%%%%%%%%%%%%%
{
\renewcommand{\arraystretch}{1.09}
\begin{table}[t]
\centering
\setlength\extrarowheight{2pt}
\begin{tabular}{|c|l|c|}
\hline
\multicolumn{1}{|l|}{\textbf{Symbol}} & \multicolumn{1}{l|}{\textbf{Description}} & \multicolumn{1}{c|}{\textbf{Notes}}\\
\hline
$\Rad$ & Radial function & \multirow{2}{*}{$AX = \Rad(\theta)X + \Tang(\theta)X^\perp$}\\
\cline{1-2}
$\Tang$ & Tangential function & \\
\hline
$m_R$ & Midline of $\Rad$ & $m_R = \,$\raisebox{1pt}[0pt][0pt]{$\tfrac{1}{2}$}$(a_{11}+a_{22})$ \\
\hline
$m_T$ & Midline of $\Tang$ & $m_T = \,$\raisebox{1pt}[0pt][0pt]{$\tfrac{1}{2}$}$(a_{21}-a_{12})$\\
\hline 
$p$ & Amplitude of $\Rad$ and $\Tang$ & $p = \,$\raisebox{1pt}[0pt][0pt]{$\tfrac{1}{2}$}$\!\sqrt{(a_{11}\!\!-\!a_{22})^2\!+\!(a_{12}\!+\!a_{21})^2}$\\
\hline
$\theta_R$ & Location of maximum of $\Rad$ & $\theta_R = \,$\raisebox{1pt}[0pt][0pt]{$\tfrac{1}{2}$}$\arctan(a_{12}\!+\!a_{21}, a_{11}\!-\!a_{22})$\\
\hline
$\theta_T$ & Location of maximum of $\Tang$ & \multicolumn{1}{c|}{$\theta_T = \theta_R -\pi/4$} \\
\hline 
$\rho_1$ & Reactivity, maximum of $\Rad$ & $\rho_1 = m_R + p$\\
\hline
$\rho_2$ & Attenuation, minimum of $\Rad$ & $\rho_2 = m_R-p$\\
\hline
$\tau_1$ & Maximum of $\Tang$ & $\tau_1 = m_T + p$\\
\hline
$\tau_2$ & Minimum of $\Tang$ & $\tau_2 = m_T - p $\\
\hline
$\delta_R$ & Reactivity radius & 
                    $\Rad(\theta)>0$ on
                    $(\theta_R-\delta_R, \theta_R+\delta_R)$\\
\hline
$\delta_T$ & Eigenvector separation radius & 
                $\Tang(\theta)>0$ on
                $(\theta_T-\delta_T, \theta_T+\delta_T)$\\
\hline
$\theta_1, \theta_2$ & Eigenvector angles & 
                $\theta_1=(\theta_T+\delta_T)$,
                $\theta_2=(\theta_T-\delta_T)$\\
\hline
$\lambda_1 \geq \lambda_2$ & Eigenvalues & 
                $\lambda_1=\Rad(\theta_1)$, 
                $\lambda_2=\Rad(\theta_2)$\\
\hline
$p_R$ & Eigenvalue separation radius & 
                $\lambda_1=(m_R+p_R)$, 
                $\lambda_2=(m_R-p_R)$\\ 
\hline
$\phi_1, \phi_2$ & Orthovector angles & 
                    $\phi_1=(\theta_R-\delta_R)$, 
                    $\phi_2=(\theta_R+\delta_R)$\\
\hline
$\mu_1 \geq \mu_2$ & Orthovalues & 
                    $\mu_1=\Tang(\phi_1)$, 
                     $\mu_2=\Tang(\phi_2)$\\
\hline
$p_T$ & Orthovalue separation radius & 
                    $\mu_1=(m_T+p_T)$, 
                    $\mu_2=(m_T-p_T)$\\
\hline
$\rho_{max}$ & Maximal amplification & Definition \ref{def:MA}\\
\hline
\end{tabular}~\\[1pt]
\begin{tabular}{|p{13mm}|p{102mm}|}
\hline
\multirow{2}{*}{Desmos}
 & (1) \href{https://www.desmos.com/calculator/uzopvqxo8p}{https://www.desmos.com/calculator/uzopvqxo8p}  (input $A$)
\\ \cline{2-2}
& 
(2) \href{https://www.desmos.com/calculator/xo8evgzgwo}{https://www.desmos.com/calculator/xo8evgzgwo} (input $\Rad$, $\Tang$)
\\
\hline
\end{tabular}~\\[1em]
\setlength{\abovecaptionskip}{-1pt}
\setlength{\belowcaptionskip}{-2pt}
\caption{These definitions are found in Theorems \ref{theo:RadTang}, \ref{theo:eigenstructure}, and \ref{theo:orthostructure}, Corollary \ref{cor:sinusoidal}, and Definitions \ref{defn:deltas}, \ref{def:ortho}, and \ref{def:MA}. Methods to compute $\delta_R$ and $\delta_T$ are given in Corollary \ref{cor:delta_R_T}. For geometric intuition, see Figures \ref{fig:RTdef}, \ref{fig:ReactiveRegion} and \ref{fig:saddle}. See 
the linked Desmos pages to interactively plot and explore $\Rad$ and $\Tang$.}
\label{tab:notation}
\end{table}
}
%%%%%%%%%%%%%%%%%%%%%%%%%%%%%%%%%%%
\begin{prop} 
\label{prop:balance}
Given System \eqref{eqn:ODE}, if $\tr(A)=0$ and $\Tang \not\equiv 0$, then the origin is a center if and only if $\Tang$ is single-signed and a balanced saddle (in the sense that the eigenvalues have equal magnitude) if and only if $\Tang$ is double-signed (i.e.\ $\tau_1>0>\tau_2$). 
\end{prop}
%%%%%%%%%%%%%%%%%%%%%%%%%%%%%%%%%%%%
\begin{proof}
By Equation \eqref{eqn:m_R}, $\tr(A)=0$ if and only if $m_R=0$. 
Thus, if $\tr(A)=0$ and $\Tang \not\equiv 0$, $A$ has eigenvalues $\lambda_{1,2}=\pm \sqrt{-\tau_1\tau_2}$ by Theorem \ref{theo:eigenstructure}. In this case,
\begingroup\allowdisplaybreaks
\begin{align*}
\hspace{-0.15in} \Tang \mbox{ is single-signed} & \Leftrightarrow \Tang \mbox{ has no zeroes and } -\tau_1\tau_2<0\\
& \Leftrightarrow A 
\mbox{ has imaginary eigenvalues } \pm i\sqrt{\tau_1\tau_2}\\
& \Leftrightarrow \mbox{ the origin is a center.}
\end{align*}
\endgroup
\begin{align*}
\hspace{-1.15in}\mbox{Similarly, } \Tang \mbox{ is double-signed} & \Leftrightarrow \tau_1>0>\tau_2, \mbox{ so that } -\tau_1\tau_2>0,\\
& \Leftrightarrow A 
\mbox{ has real eigenvalues } \pm \sqrt{-\tau_1\tau_2}\\
& \Leftrightarrow \mbox{ the origin is a symmetric saddle.}
\end{align*}
\end{proof}
%%%%%%%%%%%%%%%%%%%%%%%%%%%%%%%%%%%%
\par
%%%%%%%%%%%%%%%%%%%%%%%%%%%%%%%%%%%%
\begin{lem}
~ \label{lem:balance}
Given a real $2\times2$ matrix, $A$, with $\Rad, m_R, m_T, p,  \theta_R$ and $\tau_i$ as summarized in Table \ref{tab:notation},
if $\Rad \not\equiv 0$, the following statements are equivalent:
\begin{enumerate}[label=(\roman*),labelindent=0.9\parindent,labelsep=1pt,align=left,leftmargin=*]
\item $\tr(A)=0$,
\item $m_R=0$,
\item $A$ has orthogonal orthovectors,
\item $A$ has real orthovalues $\mu_1=\tau_1=m_T+p$ and $\mu_2=\tau_2=m_T-p$, and
\item $A$ has orthovectors $V_1$ and $V_2$
at angles $\theta_R - \frac{\pi}{4}$ and $\theta_R + \frac{\pi}{4}$, respectively.
\end{enumerate}
\end{lem}
%%%%%%%%%%%%%%%%%%%%%%%%%%%%%%%%%%%%
%%%%%%%%%%%%%%%%%%%%%%%%%%%%%%%%%%%%
\begin{proof}
The proof of Lemma \ref{lem:balance} is analogous to that of Lemma \ref{lem:sym}, with the roles of $\Rad$ and $\Tang$ interchanged.
\end{proof}
%%%%%%%%%%%%%%%%%%%%%%%%%%%%%%%%%%%%
\par 
%%%%%%%%%%%%%%%%%%%%%%%%%%%%%%%%%%%%
In Sections \ref{sec:RadTang}--\ref{sec:ortho}, we have developed the four cornerstones of our framework. We defined the radial and tangential functions, $\Rad$ and $\Tang$ (Theorem \ref{theo:RadTang} and Definition \ref{def:RT}), and showed how they capture the dynamics of System \eqref{eqn:ODE} (Corollary \ref{cor:RadTangS^1}). We then leveraged the sinusoidal structure of $\Rad$ and $\Tang$ to define the reactive and attenuating regions (Definition \ref{def:ReactiveRegion}) and to establish the orthostructure of the system (Theorem \ref{theo:orthostructure}). 
This allows us to classify useful features of the transient dynamics, provides a language to describe characteristics of the reactive region, and paints a picture of the way solution trajectories traverse the reactive region. 
%%%%%%%%%%%%%%%%%%%%%%%%%%%%%%%%%%%%
\par 
%%%%%%%%%%%%%%%%%%%%%%%%%%%%%%%%%%%%
In the next sections we build on the cornerstones. In Section \ref{sec:SF}, we define standard forms for a matrix that help to reveal the reactivity structure. In Section \ref{sec:BoundingReactivity}, we bound and quantify 
the extent to which transient repulsion can accumulate when System \eqref{eqn:ODE} has a reactive attractor, 
and in Section \ref{sec:nonaut}, we illustrate how to nonautonomously `surf' the reactivity to infinity.
%%%%%%%%%%%%%%%%%%%%%%%%%%%%
\section{Standard matrix forms for analyzing transient dynamics}
\label{sec:SF}
%%%%%%%%%%%%%%%%%%%%%%%%%%%
In this section we define four standard forms for a matrix $A$ (Definition \ref{def:std_forms}) as alternatives to the Jordan canonical form. Alternatives are needed because the Jordan form, although fundamental for studying asymptotic dynamics, suffers from the problem of masking transient dynamics (\cite{higham1993stiffness} and Proposition \ref{prop:sym}). Each of the alternatives we define highlights a different aspect of the dynamics of System \eqref{eqn:ODE} by conveniently locating either the reactive region or the eigenvectors relative to the coordinate axes. In Theorem \ref{theo:std_forms} we show that, much like the Jordan form, the standard forms of $A$ are easy to write down (in terms of the parameters of $\Rad$ and $\Tang$ and the eigenvalues and orthovalues of $A$). However, in contrast with the Jordan form, these standard forms are obtained by conjugation with pure rotation, and therefore respect the transient behavior of System \eqref{eqn:ODE} as well as the asymptotic behavior.
%%%%%%%%%%%%%%%%%%%%%%%%%%%
\begin{defn}
\label{def:std_forms}
A real-valued $2\times 2$ matrix $B$, with $\Rad, \Tang, \rho_1, \tau_1, \theta_R, \theta_T, \delta_R,$ and $\delta_T$ as summarized in Table \ref{tab:notation}, is in
\begin{enumerate}[label=(\roman*),labelindent=0.7\parindent,labelsep=2pt,align=left,leftmargin=*]
    \item
    $\Rad$-centered form if $\Rad_B(0)=\rho_1$, so $\Rad_B$ attains its maximum at $\theta_R=0$.
    \item
    $\Tang$-centered form if $\Tang_B(0)=\tau_1$, so $\Tang_B$ attains its maximum at $\theta_T=0$.
    \item
    $\Rad$-zeroed form if $\Rad_B(0)=0$ and $\Rad_B'(0)\geq 0$, so $\Rad_B$ attains its maximum at $\theta_R = \delta_R$.
    \item
    $\Tang$-zeroed form if $\Tang_B(0)=0$ and $\Tang_B'(0)\geq 0$, so $\Tang_B$ attains its maximum at $\theta_T = \delta_T$.
\end{enumerate}
\end{defn}
%%%%%%%%%%%%%%%%%%%%%%%%%%%
\par \medskip \noindent
%%%%%%%%%%%%%%%%%%%%%%%%%%%
\textbf{Dynamics of the standard forms.}
If $A$ is in $\Rad$-centered form, then $\theta_R=0$, and the reactivity of System  \eqref{eqn:ODE} is on the $x$-axis. If, moreover, $A$ has real orthovalues, then the orthovector angles (zeros of $\Rad_A$) are $\pm \delta_R$ and the reactive region is centered on the $x$-axis as in Figure \ref{fig:std_forms}a. If, by contrast, $A$ is in $\Rad$-zeroed form, then it has orthovector angles at $0$ and $2\delta_R$, so the $x$-axis forms one of the boundary lines of the reactive region. If the orthovalues are distinct ($\delta_R \neq 0$), then the condition $\Rad_A'(0)\geq 0$ ensures that the reactive region extends from the $x$-axis into the first and third quadrants as in Figure \ref{fig:std_forms}c. 
%%%%%%%%%%%%%%%%%%%%%%%%%%%
\par 
%%%%%%%%%%%%%%%%%%%%%%%%%%%
If $A$ is in $\Tang$-centered form, then $\theta_T=0$, and the maximum angular velocity of System  \eqref{eqn:ODE} occurs on the $x$-axis. If, moreover, $A$ has real eigenvalues, then the eigenvector angles (zeros of $\Tang_A$) are at $\pm \delta_T$ and the eigenvectors are symmetric\-al\-ly located relative to the coord\-inate axes as in Figure \ref{fig:std_forms}b. If, by contrast, $A$ is in $\Tang$-zeroed form, then it has eigen\-vector angles at $0$ and $2\delta_T$, so the $x$-axis is an eigen\-line. If the eigen\-values are distinct ($\delta_T \neq 0$), then the condition $\Tang_A'(0)\geq 0$ ensures that $\Tang_A > 0$ on $(0,2\delta_T)$ and, in the angular dynamics, traject\-ories are directed away from the $x$-axis, as in Figure \ref{fig:std_forms}d. 
%%%%%%%%%%%%%%%%%%%%%%%%%%%
\begin{figure}
    \centering
    \includegraphics[width=\textwidth]{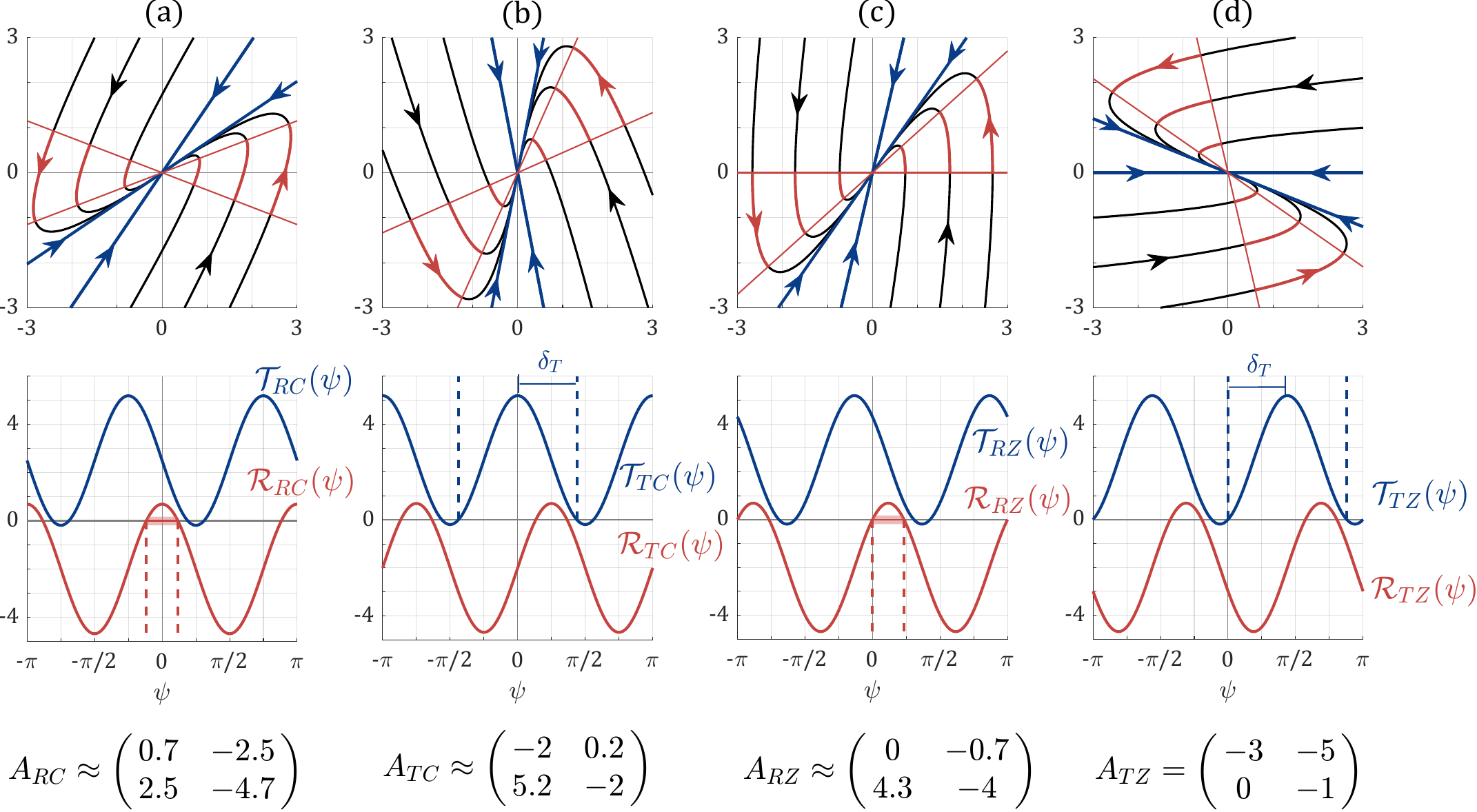}
    \setlength{\abovecaptionskip}{6pt}
    \caption{
    Phase portraits of System \eqref{eqn:ODE}, and corresponding graphs of $\Rad$ and $\Tang$, associated with the four standard forms of the matrix $A=\left(\begin{smallmatrix} -1 & -5 \\ 0 & -3 \end{smallmatrix}\right)$, illustrating 
    Theorem \ref{theo:std_forms}, Table \ref{tab:std_forms}, and Definition \ref{def:std_forms}. The original system associated with $A$ is shown in Figure \ref{fig:reac-increase-series3}b. 
    \textbf{(a)} $\Rad$-centered form, in which the reactive region (red) is centered on the $x$-axis, and $\Rad$ achieves its maximum at $\psi=0$.
    \textbf{(b)} $\Tang$-centered form, in which the eigendirections (dark blue) are symmetrically located relative to the coordinate axes, and $\Tang$ achieves its maximum at $\psi=0$.
    \textbf{(c)} $\Rad$-zeroed form, in which the $x$-axis is a boundary line of the reactive region (red), and $\Rad$ achieves its maximum at $\psi=\delta_R$.
    \textbf{(d)} $\Tang$-zeroed form, in which the $x$-axis is an eigendirection (dark blue), and $\Tang$ achieves its maximum at $\psi=\delta_T$.
    The four phase portraits can be viewed as rotations of each other, relative to the coordinate axes, while the radial and tangential functions are horizontal translations of each other, illustrating Lemma \ref{lem:horizontal_shift} and Corollary \ref{cor:conj-shift}.
    }
    \label{fig:std_forms}
\end{figure}
%%%%%%%%%%%%%%%%%%%%%%%%%%%
\par \medskip \noindent 
%%%%%%%%%%%%%%%%%%%%%%%%%%%
\textbf{Calculating the standard forms.}
After fixing some notation, we describe how to calculate the four standard forms for a given matrix $A$ (Theorem \ref{theo:std_forms}). Figure \ref{fig:std_forms} shows the standard forms and their corresponding dynamics for 
the system illustrated in
Figure \ref{fig:reac-increase-series3}b.
Table \ref{tab:std_forms} summarizes the standard forms, together with their associated 
radial and tangential functions, 
eigenvector angles and eigenvalues (when real), orthovector angles and orthovalues (when real), and conjugation angle $\gamma$.
%%%%%%%%%%%%%%%%%%%%%%%%%%%
\begin{notn}
\label{notn:rotation-matrix}
The matrix representing counter-clockwise rotation by $\gamma$ is denoted
\begin{equation*}
M_\gamma = \begin{pmatrix}\cos \gamma &  -\sin \gamma\\ \sin \gamma & \cos \gamma\end{pmatrix}
\end{equation*}   
\end{notn}
%%%%%%%%%%%%%%%%%%%%%%%%%%%%
\begin{theo}
\label{theo:std_forms}
Let $A$ be a real $2 \times 2$ matrix, with $m_R$, $m_T$, $\rho_{i}$, $\tau_{i}$, $\lambda_{i}$, and $\mu_{i}$ as summarized in Table \ref{tab:notation}.
\begin{enumerate}[label=(\roman*),labelindent=0.7\parindent,labelsep=2pt,align=left,leftmargin=*]
\item 
If $\gamma=\theta_R$ and $A_{RC}=M_\gamma^{-1}AM_\gamma$, then $A_{RC}$ is in $\Rad$-centered form, and 
$$A_{RC}=\begin{pmatrix} \rho_1 & -m_T \\ m_T & \rho_2 \end{pmatrix}.$$
\item If $\gamma=\theta_T$ and $A_{TC}=M_\gamma^{-1}AM_\gamma$, then $A_{TC}$ is in $\Tang$-centered form, and 
$$A_{TC}=\begin{pmatrix} m_R & -\tau_2 \\ \tau_1 & m_R \end{pmatrix}.$$
\item When $A$ has real orthovalues $\mu_1 > \mu_2$ 
with corresponding reactivity radius $\delta_R$,
if $\gamma=\theta_R-\delta_R$ and $A_{RZ}=M_\gamma^{-1}AM_\gamma$, then $A_{RZ}$ is in $\Rad$-zeroed form, and 
$$A_{RZ}=\begin{pmatrix} 0 & -\mu_2 \\ \mu_1 & 2m_R \end{pmatrix}.$$
\item When $A$ has real eigenvalues $\lambda_1 > \lambda_2$ with corresponding eigenvector separation radius $\delta_T$, 
if $\gamma=\theta_T-\delta_T$ and $A_{TZ}=M_\gamma^{-1}AM_\gamma$, then $A_{TZ}$ is in $\Tang$-zeroed form, and 
$$A_{TZ}=\begin{pmatrix} \lambda_2 & -2m_T \\ 0 & \lambda_1 \end{pmatrix}.$$
\end{enumerate}
\end{theo}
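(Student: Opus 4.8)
The plan is to derive all four standard forms from a single fact about how $\Rad$ and $\Tang$ transform under conjugation by a pure rotation, and then simply read off each matrix using Corollary~\ref{cor:RadTang-to-A}. The key preliminary step is the rotation--shift identity (this is Lemma~\ref{lem:horizontal_shift}): if $M_\gamma$ is the rotation of Equation~\eqref{eqn:rotation-matrix} and $B = M_\gamma^{-1}AM_\gamma$, then $\Rad_B(\theta) = \Rad_A(\theta+\gamma)$ and $\Tang_B(\theta) = \Tang_A(\theta+\gamma)$. This follows directly from Theorem~\ref{theo:RadTang}: for $X = r(\cos\theta,\sin\theta)^T$ we have $M_\gamma X = r(\cos(\theta+\gamma),\sin(\theta+\gamma))^T$ and, since $M_\gamma$ commutes with $J$ (both lie in $SO(2)$), also $(M_\gamma X)^\perp = M_\gamma X^\perp$; substituting the decomposition $A(M_\gamma X) = \Rad_A(\theta+\gamma)(M_\gamma X) + \Tang_A(\theta+\gamma)(M_\gamma X)^\perp$ and applying $M_\gamma^{-1}$ to both sides gives $BX = \Rad_A(\theta+\gamma)X + \Tang_A(\theta+\gamma)X^\perp$, which is the claim. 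Thus conjugation by $M_\gamma$ only slides the graphs of the sinusoids $\Rad,\Tang$ horizontally by $-\gamma$: it subtracts $\gamma$ from $\theta_R$, $\theta_T$, the eigenvector angles and the orthovector angles (Lemma~\ref{lem:conj-shift}), and leaves $m_R$, $m_T$, $p$ — hence also $\rho_{1,2}$, $\tau_{1,2}$, $\lambda_{1,2}$, $\mu_{1,2}$, $\delta_R$, $\delta_T$ — unchanged.

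With this in hand the four cases are one uniform computation. In each case the prescribed rotation angle is precisely the one that produces the centering/zeroing property of Definition~\ref{def:std_forms}: $\gamma = \theta_R$ moves the maximum of $\Rad$ to $0$; $\gamma = \theta_T = \theta_R - \tfrac{\pi}{4}$ moves the maximum of $\Tang$ to $0$; $\gamma = \theta_R - \delta_R$ (a zero of $\Rad$, by Definition~\ref{defn:deltas}) moves a zero of $\Rad$ to $0$; and $\gamma = \theta_T - \delta_T$ (the eigendirection $\theta_2$ of the smaller eigenvalue, a zero of $\Tang$) moves a zero of $\Tang$ to $0$. By Corollary~\ref{cor:RadTang-to-A} the conjugated matrix is $\bigl(\begin{smallmatrix}\Rad_B(0)&-\Tang_B(\pi/2)\\ \Tang_B(0)&\Rad_B(\pi/2)\end{smallmatrix}\bigr)$, so each entry equals one of $\Rad_A(\gamma)$, $\Rad_A(\gamma+\tfrac{\pi}{2})$, $\Tang_A(\gamma)$, $\Tang_A(\gamma+\tfrac{\pi}{2})$, and double-angle identities collapse these to the named quantities. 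For (i): $\Rad_A(\theta_R)=\rho_1$, $\Rad_A(\theta_R+\tfrac{\pi}{2})=\rho_2$, and $\Tang_A(\theta_R)=\Tang_A(\theta_R+\tfrac{\pi}{2})=m_T$ (both sines vanish), giving $A_{RC}=\bigl(\begin{smallmatrix}\rho_1&-m_T\\ m_T&\rho_2\end{smallmatrix}\bigr)$; (ii) is the same computation shifted by $\tfrac{\pi}{4}$. For (iii)--(iv) the extra ingredient is that a zero of $\Rad$ at $\phi_1=\theta_R-\delta_R$ forces $p\cos 2\delta_R=-m_R$, hence $p\sin 2\delta_R=\sqrt{p^2-m_R^2}=p_T$ (Corollary~\ref{cor:ortho-calc}); then $\Rad_A(\phi_1+\tfrac{\pi}{2})=m_R-p\cos 2\delta_R=2m_R$, while $\Tang_A(\phi_1)=m_T+p\sin 2\delta_R=\mu_1$ and $\Tang_A(\phi_1+\tfrac{\pi}{2})=\mu_2$, so $A_{R0}=\bigl(\begin{smallmatrix}0&-\mu_2\\ \mu_1&2m_R\end{smallmatrix}\bigr)$; case (iv) is identical with the roles of $\Rad$ and $\Tang$ interchanged, using $p\cos 2\delta_T=-m_T$ and $p\sin 2\delta_T=p_R$ (Corollary~\ref{cor:pR}), which yields $A_{T0}=\bigl(\begin{smallmatrix}\lambda_2&-2m_T\\ 0&\lambda_1\end{smallmatrix}\bigr)$.

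It remains to verify the derivative-sign clauses in Definition~\ref{def:std_forms}(iii)--(iv), namely $\Rad_{R0}'(0)\ge 0$ and $\Tang_{T0}'(0)\ge 0$. Differentiating the shifted sinusoids, these become $2p\sin 2\delta_R\ge 0$ and $2p\sin 2\delta_T\ge 0$, which hold because $p>0$ in the distinct-orthovalue (resp.\ distinct-eigenvalue) case assumed in (iii) (resp.\ (iv)) and $\delta_R,\delta_T\in(0,\tfrac{\pi}{2})$ — the reactive set (resp.\ the eigenvector-separating set) occupies strictly less than a half-period of the $\pi$-periodic $\Rad$ (resp.\ $\Tang$). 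I do not expect a genuine obstacle here: this is essentially a packaging theorem whose crux is the rotation--shift identity of the first paragraph, after which everything is routine trigonometry. The only points demanding care are the bookkeeping that matches each evaluated sinusoid to the \emph{named} quantity appearing in the statement (so the answer comes out in terms of $\mu_1,\mu_2,m_R$, etc., rather than $m_R,p,\delta_R$), and fixing the orientation conventions in the zeroed forms — in particular confirming that $\gamma=\theta_T-\delta_T$ places the smaller-eigenvalue eigendirection (not the larger) on the positive $x$-axis, which is exactly what makes $\Tang_{T0}'(0)\ge 0$ rather than $\le 0$, and likewise that $\gamma=\theta_R-\delta_R$ selects the boundary ray of $\Rreg$ from which the reactive region opens into the first quadrant.
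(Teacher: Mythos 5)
Your proposal is correct and follows essentially the same route as the paper: both hinge on the rotation--shift identity (the paper's Lemma \ref{lem:horizontal_shift}), then read off each standard form via Corollary \ref{cor:RadTang-to-A} by evaluating the shifted sinusoids at $0$ and $\pi/2$, using $p\cos(2\delta_R)=-m_R$, $p\sin(2\delta_R)=p_T$ (and their $\Tang$ counterparts) to express the entries as $\mu_{1,2}$, $2m_R$, $\lambda_{1,2}$, $2m_T$. Your explicit check of the derivative-sign clauses $\Rad_{R0}'(0)\ge 0$ and $\Tang_{T0}'(0)\ge 0$ is a small addition the paper leaves implicit, but it does not change the argument.
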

%%%%%%%%%%%%%%%%%%%%%%%%%%%
\par \noindent
%%%%%%%%%%%%%%%%%%%%%%%%%%%
\begin{notn}
\label{notn:subscripts}
Henceforth, the $\Rad$-centered, $\Tang$-centered, $\Rad$-zeroed, and $\Tang$-zeroed forms of a matrix, and their corresponding radial and tangential functions, are denoted by the subscripts $RC$, $TC$, $RZ$, and $TZ$, respectively, as in Theorem \ref{theo:std_forms}. 
\end{notn}
%%%%%%%%%%%%%%%%%%%%%%%%%%%
\par \medskip \noindent
%%%%%%%%%%%%%%%%%%%%%%%%%%%
\textbf{Conjugation by rotation as a horizontal shift of $\Rad$ and $\Tang$.}
%%%%%%%%%%%%%%%%%%%%%%%%%%%
Before proving Theorem \ref{theo:std_forms}, 
we need to establish the impact of conjugation by rotation on the radial and tangential functions of System \eqref{eqn:ODE}. There are many ways to view the impact of conjugation. In polar coordinates, conjugation by $M_\gamma$ corresponds to a change of coordinates from $(r,\theta)$ to $(r, \psi)$, where $\psi = \theta - \gamma$. In the top row of Figure \ref{fig:std_forms} we represent the impact of conjugation as a rotation of the phase portrait by $\gamma$, clockwise, relative to the coordinate axes. Alternatively, we could view it as a counter-clockwise rotation of the coordinate axes relative to the phase portrait. In the bottom row of Figure \ref{fig:std_forms} we give the algebraic result of  conjugation to the standard forms 
for a specific matrix $A$.
%%%%%%%%%%%%%%%%%%%%%%%%%%%
\par 
%%%%%%%%%%%%%%%%%%%%%%%%%%%%
In Lemma \ref{lem:horizontal_shift} we confirm that conjugation by rotation induces a horizontal shift of the radial and tangential functions, as observed in the middle row of Figure \ref{fig:std_forms}. This shifts the eigenvector and orthovector angles, and $\theta_R$ and $\theta_T$ accordingly, 
while preserving the eigenvalues, orthovalues, and other parameters of $\Rad$ and $\Tang$ (Corollary \ref{cor:conj-shift}).
%%%%%%%%%%%%%%%%%%%%%%%%%%%
\begin{lem}
\label{lem:horizontal_shift}
Given a real $2\times2$ matrix $A$, if $B=M_\gamma^{-1}AM_\gamma$, then 
\begin{equation*}
 \Rad_B(\theta) = \Rad_A(\theta + \gamma) \mbox{ and } \ 
 \Tang_B(\theta) = \Tang_A(\theta + \gamma).   
\end{equation*}
\end{lem}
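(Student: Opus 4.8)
The plan is to combine Theorem \ref{theo:RadTang} (which expresses $AX$ in the radial/tangential basis at angle $\theta$) with the elementary fact that $M_\gamma$ is an orthogonal rotation matrix, so it sends the unit vector at angle $\theta$ to the unit vector at angle $\theta+\gamma$ and likewise rotates the perpendicular vector. Write $U(\theta) = \left(\begin{smallmatrix}\cos\theta\\ \sin\theta\end{smallmatrix}\right)$, so that $U(\theta)^\perp = JU(\theta) = U(\theta + \pi/2)$. The key identities I would establish first are $M_\gamma U(\theta) = U(\theta+\gamma)$ and, since $M_\gamma$ commutes with $J$ (both are rotations), $M_\gamma\bigl(U(\theta)^\perp\bigr) = \bigl(M_\gamma U(\theta)\bigr)^\perp = U(\theta+\gamma)^\perp$.

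Next I would apply $B = M_\gamma^{-1} A M_\gamma$ to $U(\theta)$ and chase the vectors through. On one hand, Theorem \ref{theo:RadTang} applied to $B$ gives $B\,U(\theta) = \Rad_B(\theta)\,U(\theta) + \Tang_B(\theta)\,U(\theta)^\perp$. On the other hand, $B\,U(\theta) = M_\gamma^{-1} A M_\gamma U(\theta) = M_\gamma^{-1} A\,U(\theta+\gamma)$, and applying Theorem \ref{theo:RadTang} to $A$ at angle $\theta+\gamma$ yields $A\,U(\theta+\gamma) = \Rad_A(\theta+\gamma)\,U(\theta+\gamma) + \Tang_A(\theta+\gamma)\,U(\theta+\gamma)^\perp$. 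Applying $M_\gamma^{-1}$ (a rotation by $-\gamma$, which commutes with $J$) sends $U(\theta+\gamma) \mapsto U(\theta)$ and $U(\theta+\gamma)^\perp \mapsto U(\theta)^\perp$, so $B\,U(\theta) = \Rad_A(\theta+\gamma)\,U(\theta) + \Tang_A(\theta+\gamma)\,U(\theta)^\perp$. Comparing the two expansions of $B\,U(\theta)$ in the orthogonal basis $\{U(\theta), U(\theta)^\perp\}$ and matching coefficients gives $\Rad_B(\theta) = \Rad_A(\theta+\gamma)$ and $\Tang_B(\theta) = \Tang_A(\theta+\gamma)$, as claimed.

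There is no serious obstacle here; the only thing to be careful about is the bookkeeping of which rotation is applied and in which direction, i.e.\ checking that $M_\gamma^{-1} = M_{-\gamma}$ sends angle $\theta+\gamma$ back to angle $\theta$ rather than to $\theta+2\gamma$, and that $M_\gamma$ genuinely commutes with $J$ (equivalently, that rotations form a commutative group). Both are immediate from the definition \eqref{eqn:rotation-matrix} of $M_\gamma$ and of $J$ in \eqref{eqn:J}. One could alternatively avoid the vector chase entirely and substitute the conjugated matrix entries into formulas \eqref{eqn:m_R}--\eqref{eqn:theta_R} to see that $m_R$, $m_T$, $p$ are unchanged while $\theta_R$ shifts to $\theta_R - \gamma$, which by \eqref{eqn:Rad}--\eqref{eqn:Tang} is exactly a horizontal shift by $\gamma$; but the coordinate-free argument above is cleaner and I would present that one.
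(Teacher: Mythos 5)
Your argument is correct and is essentially identical to the paper's proof: both apply Theorem \ref{theo:RadTang} to $A$ at angle $\theta+\gamma$ and to $B$ at angle $\theta$, use the fact that $M_\gamma$ commutes with $J$ (so $(M_\gamma X)^\perp = M_\gamma X^\perp$), and match coefficients in the basis $\{X, X^\perp\}$. The only cosmetic difference is that you work with the unit vector $U(\theta)$ and explicitly push $M_\gamma^{-1}$ through the decomposition, whereas the paper equates $M_\gamma^{-1}AM_\gamma X$ with the decomposition of $BX$ directly.
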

%%%%%%%%%%%%%%%%%%%%%%%%%%%
%%%%%%%%%%%%%%%%%%%%%%%%%%%
\begin{proof}
If 
$X = \begin{pmatrix}r\cos \theta\\ r\sin\theta \end{pmatrix}$, 
then rotation by $\gamma$ gives
$M_\gamma X = \begin{pmatrix} r\cos(\theta + \gamma) \\ r\sin(\theta + \gamma)\end{pmatrix}.$
Therefore, by Theorem \ref{theo:RadTang},
\begin{equation*}
A (M_\gamma X) = \Rad_A(\theta + \gamma) (M_\gamma X) + \Tang_A(\theta + \gamma) (M_\gamma X)^\perp.
\end{equation*}
Since $M_\gamma$ represents rotation around the origin, $(M_\gamma X)^\perp = M_\gamma X^\perp$.  Thus
\begin{align*}
BX & =M_\gamma^{-1} A M_\gamma X \\
& = M_\gamma^{-1} (\Rad_A(\theta + \gamma) M_\gamma X + \Tang_A(\theta + \gamma) M_\gamma X^\perp)\\
& = \Rad_A(\theta + \gamma) M_\gamma^{-1} M_\gamma X + \Tang_A(\theta + \gamma) M_\gamma^{-1} M_\gamma X^\perp\\
& = \Rad_A(\theta + \gamma)X + \Tang_A(\theta + \gamma)X^\perp.
\end{align*}
By Theorem \ref{theo:RadTang}, we also have 
\begin{equation*}
BX = \Rad_B(\theta)X + \Tang_B(\theta)X^\perp,
\end{equation*}
and the lemma follows.
\end{proof}
%%%%%%%%%%%%%%%%%%%%%%%%%%%
%%%%%%%%%%%%%%%%%%%%%%%%%%%
\begin{cor}
\label{cor:conj-shift}
Let $A$ be a real $2 \times 2$ matrix, 
and let $\Rad$, $\Tang$, $m_R$, $m_T$, $p$, $\delta_R$, $\delta_T$, $\rho_{i}$, $\tau_{i}$, $\lambda_{i}$, and $\mu_{i}$ be as summarized in Table \ref{tab:notation}.
\begin{enumerate}[label=(\roman*),labelindent=0.7\parindent,align=left,labelsep=2pt,leftmargin=*]
\item The values of $m_R$, $m_T$, $p$, $\delta_R$, $\delta_T$, $\rho_{i}$, $\tau_{i}$, $\lambda_{i}$, and $\mu_{i}$ are invariant under conjugation of $A$ by $M_\gamma$.
\item If $B=M_\gamma^{-1}AM_\gamma$ and $A$ has eigenvector angles $\theta_i$ and orthovector angles $\phi_i$, then $B$ has eigenvector angles $\theta_i-\gamma$ and orthovector angles $\phi_i-\gamma$.
\item 
If $B=M_\gamma^{-1}AM_\gamma$, $\Rad_A$ achieves its maximum at angle $\theta_R$, and $\Tang_A$ achieves its maximum at angle $\theta_T$, then $\Rad_B$ achieves its maximum at angle $\theta_R-\gamma$ and $\Tang_B$ achieves its maximum at angle $\theta_T-\gamma$.
\end{enumerate}
\end{cor}
%%%%%%%%%%%%%%%%%%%%%%%%%%%
\begin{proof}
Part $(i)$ follows immediately from Lemma \ref{lem:horizontal_shift}.
For part $(ii)$, by Theorem \ref{theo:eigenstructure}, $A$ has an eigenvector located at angle $\theta_i$ if and only if $\Tang_A(\theta_i)=0$. 
Thus, by Lemma \ref{lem:horizontal_shift}, $\Tang_B(\theta_i - \gamma) = \Tang_A(\theta_i)=0$, so that $B$ has an eigenvector located at angle $\theta_i-\gamma$. The rest of part $(ii)$ and part $(iii)$ follow similarly from Theorem \ref{theo:orthostructure} and Corollary \ref{cor:sinusoidal}.
\end{proof}
%%%%%%%%%%%%%%%%%%%%%%%%%%%
\par \noindent
%%%%%%%%%%%%%%%%%%%%%%%%%%%
\textbf{Proof of the standard forms.}
We now prove Theorem \ref{theo:std_forms}, together with the details given in Table \ref{tab:std_forms}.
%%%%%%%%%%%%%%%%%%%%%%%%%%%
%%%%%%%%%%%%%%%%%%%%%%%%%%%
\begin{proof}[Proof of Theorem \ref{theo:std_forms}]
%%%%%%%%%%%%%%%%%%%%%%%%%%%
%%%%%%%%%%%%%%%%%%%%%%%%%%%
\textit{Part $(i)$, $\Rad$-centered form.}
If $\gamma=\theta_R$, then conjugation by $M_\gamma$ corresponds to a change in polar coordinates from $(r,\theta)$ to $(r,\psi)$, where $\psi=\theta-\theta_R$. By Corollary \ref{cor:conj-shift}, since $\Rad_A$ achieves its maximum at $\theta_R$, $\Rad_{RC}$ achieves its maximum at $\psi_R= \theta_R - \gamma = 0$, and so $A_{RC}$ is in $\Rad$-centered form (Definition \ref{def:std_forms}).
%%%%%%%%%%%%%%%%%%%%%%%%%%%
\par
%%%%%%%%%%%%%%%%%%%%%%%%%%%
To find $\Rad_{RC}$ and $\Tang_{RC}$ in the new $\psi$ coordinates, we apply Theorem \ref{theo:RadTang}, noting that $\psi_R=0$, and that, by Corollary \ref{cor:conj-shift}, $m_T, m_R$ and $p$ are all invariant under conjugation by $M_\gamma$, so can be calculated directly from $A$. Thus,
\begin{align}
\label{eqn:Rad-psi}
\Rad_{RC}(\psi) & = m_R + p \cos(2(\psi - 0)) = m_R + p \cos(2\psi)\\
\label{eqn:Tang-psi}
\Tang_{RC}(\psi) & = m_T - p \sin(2(\psi - 0)) = m_T - p \sin(2\psi).
\end{align}    
%%%%%%%%%%%%%%%%%%%%%%%%%%%
\par 
%%%%%%%%%%%%%%%%%%%%%%%%%%%
To find $A_{RC}$, we apply Proposition \ref{prop:RadTang-to-A} to Equations \eqref{eqn:Rad-psi} and \eqref{eqn:Tang-psi}, noting that, by Corollary \ref{cor:conj-shift} again, the reactivity $\rho_1=m_R+p$ and attenuation $\rho_2=m_R-p$ are invariant under conjugation by $M_\gamma$:
\begin{align*}
A_{RC} & = \begin{pmatrix} \Rad_{RC}(0) & -\Tang_{RC}(\pi/2) \\ \Tang_{RC}(0) & \Rad_{RC}(\pi/2) \end{pmatrix}
 = \begin{pmatrix} m_R+p\cos(0) & -m_T+p\sin(\pi) \\ m_T-p\sin(0) & m_R+p\cos(\pi) \end{pmatrix}\\
& = \begin{pmatrix} m_R+p & -m_T \\ m_T & m_R-p \end{pmatrix}
= \begin{pmatrix} \rho_1 & -m_T \\ m_T & \rho_2 \end{pmatrix}.
\end{align*} 
%%%%%%%%%%%%%%%%%%%%%%%%%%%
\par 
%%%%%%%%%%%%%%%%%%%%%%%%%%%
The proof of part $(ii)$, for $\Tang$-centered form, is similar to that of part $(i)$ after noting that $\theta_T = \theta_R - \pi/4$ (Corollary \ref{cor:sinusoidal}), and so $\psi_T=0 \Leftrightarrow \psi_R = \pi/4$ 
%%%%%%%%%%%%%%%%%%%%%%%%%%%
\par 
%%%%%%%%%%%%%%%%%%%%%%%%%%%
\textit{Part $(iii)$, $\Rad$-zeroed form.}
If $\gamma=\theta_R-\delta_R$, then conjugation by $M_\gamma$ corresponds to changing coordinates from $(r,\theta)$ to $(r,\psi)$, where $\psi=\theta-(\theta_R-\delta_R)$.
By Corollary \ref{cor:conj-shift}, since $\Rad_A$ achieves its maximum at $\theta_R$, $\Rad_{RZ}$ achieves its maximum at $\psi_R=\theta_R-(\theta_R-\delta_R)=\delta_R$.
We know, from Theorem \ref{theo:orthostructure},
that the zeros of $\Rad_A$ are at $\theta_R \pm \delta_R$, and $\Rad_A(\theta)>0$ on $(\theta_R-\delta_R, \theta_R+\delta_R)$.   
Thus, by Corollary \ref{cor:conj-shift} again, the zeroes of $\Rad_{RZ}$ are at $\psi=0$ and $\psi = 2\delta_R$, and $\Rad_{RZ}(\psi)>0$ on  $(0,2\delta_R)$. Hence, $A_{RZ}$ is in $\Rad$-zeroed form.  
%%%%%%%%%%%%%%%%%%%%%%%%%%%
Applying Theorem \ref{theo:RadTang}: 
\begin{align*}
\Rad_{RZ}(\psi) & = m_R + p \cos(2(\psi - \delta_R)) \\
\Tang_{RZ}(\psi) & = m_T - p \sin(2(\psi - \delta_R)). 
\end{align*} 

To apply Proposition \ref{prop:RadTang-to-A} to find $A_{RZ}$, note that, by Theorem \ref{theo:orthostructure}, 
$\psi=0$ is the orthovector angle associated with the larger orthovalue, and so the first column of $A_{RZ}$ is given by 
\[
\Rad_{RZ}(0) = 0 \  \mbox{ and } \ 
\Tang_{RZ}(0) = \mu_1. \\
\]
%%%%%%%%%%%%%%%%%%%%%
To find $\Rad_{RZ}(\pi/2)$, note that 
\begin{align}
\Rad_{RZ}(0) = 0 
& \implies m_R + p \cos(-2\delta_R) = 0 \notag \\
& \implies  p \cos(-2\delta_R) = -m_R, \label{eq:cosdelta_R_prep}
\end{align}
\vspace{-1em}
and hence
\begin{align*}
\Rad_{RZ}(\pi/2) 
& = m_R + p \cos(\pi - 2\delta_R) \\
& = m_R - p \cos(-2\delta_R) = 2m_R.
\end{align*}
%%%%%%%%%%%%%%%%%%%%%%%%%
Similarly, to find $\Tang_{RZ}(\pi/2)$, note that 
\begin{align}
\Tang_{RZ}(0) = \mu_1 = m_T+p_T 
& \implies m_T-p\sin(-2\delta_R) = m_T+p_T \notag \\
& \implies p\sin(-2\delta_R) =-p_T, \label{eq:sindelta_R_prep}
\end{align}
\vspace{-1em}
and hence
\begin{align*}
\Tang_{RZ}(\pi/2) & = m_T-p\sin(\pi - 2\delta_R) \\
& = m_T+p\sin(-2\delta_R) = m_T-p_T = \mu_2.
\end{align*}
%%%%%%%%%%%%%%%%%%%%%%%%%%%
Thus,
\begin{align*}
A_{RZ} & = \begin{pmatrix} \Rad_{RZ}(0) & -\Tang_{RZ}(\pi/2) \\ \Tang_{RZ}(0) & \Rad_{RZ}(\pi/2) \end{pmatrix}
 = \begin{pmatrix} 0 & -\mu_2 \\ \mu_1 & 2m_R \end{pmatrix}.
\end{align*}
%%%%%%%%%%%%%%%%%%%%%%%%%%%
\par 
%%%%%%%%%%%%%%%%%%%%%%%%%%%
The proof of part $(iv)$, for $\Tang$-zeroed form, is similar to that of part $(iii)$ after noting that  $\psi_T=\delta_T \Leftrightarrow \psi_R = \delta_T+\pi/4$, and that in $\Tang$-zeroed form, $\psi=0$ is the eigenvector angle associated with the smaller eigenvalue $\lambda_2$ (Theorem \ref{theo:eigenstructure}).
\end{proof}
%%%%%%%%%%%%%%%%%%%%%%%%%%%
	\begin{landscape}
	\begin{table}[t]
		\centering\small
	
		\begin{tabular}{|r|c|c|c|c|}
			\hline
			&&&&\\[-3.25pt]
			 & \textbf{$\Rad$-centered} & \textbf{$\Tang$-centered} & \textbf{$\Rad$-zeroed} & \textbf{$\Tang$-zeroed}\\
			 &&&&\\[-3.25pt]
			\hline
			&&&&\\[-3pt]
			Matrix: &
			$A_{RC} = \begin{pmatrix}\rho_1 & -m_T\\ m_T & \rho_2\end{pmatrix}$ &
			$A_{TC} = \begin{pmatrix}m_R & -\tau_2\\ \tau_1 & m_R\end{pmatrix}$  &
			$A_{RZ} = \begin{pmatrix} 0 & -\mu_2\\\mu_1&2m_R\end{pmatrix}$
			& 
			$A_{TZ} = \begin{pmatrix}\lambda_2 & -2m_T\\ 0 & \lambda_1\end{pmatrix}$\\
			&&&&\\[-3pt]
			\hline
             &&&&\\[-3pt]
			$\begin{aligned}[t] \Rad(\psi)= &\\ \Tang(\psi)=&\end{aligned}$&
			$\begin{aligned}[t] & m_R+p\cos(2\psi)\\ 
				&m_T -p\sin(2\psi)\end{aligned}$ &
			$\begin{aligned}[t] & m_R+p\sin(2\psi)\\ &m_T + p\cos(2\psi)\end{aligned}$  &
			$\begin{aligned}[t] & m_R+p\cos(2(\psi-\delta_R))\\ &m_T - p\sin(2(\psi-\delta_R))\end{aligned}$ 
			&  
			$\begin{aligned}[t] & m_R+p\sin(2(\psi-\delta_T))\\ 
				& m_T + p\cos(2(\psi-\delta_T))\end{aligned}$ \\
			&&&&\\[-3pt]
            \hline 
            &&&&\\[-3pt]
            Maxima: &
            $\psi_R=0$, \,\, $\psi_T = -\tfrac{\pi}{4}$&
            $\psi_R=\tfrac{\pi}{4}$,\,\,$\psi_T = 0$&
            $\psi_R=\delta_R$,\,\,$\psi_T = \delta_R-\tfrac{\pi}{4}$&
            $\psi_R=\delta_T+\tfrac{\pi}{4}$,\,\, $\psi_T = \delta_T$\\
			&&&&\\[-3pt]
            \hline
			&&&&\\[-3pt]
			Horizontal shift: &
			$\gamma = \theta_R$ &
			$\gamma =  \theta_T = \theta_R - \pi/4$ &
			$\gamma = \theta_R-\delta_R$ &
			$\gamma = \theta_T-\delta_T$ \\
			&&&&\\[-4pt]
			\hline 
            &&&&\\[-3pt]
			Eigenvectors:& 
			$\tfrac{-\pi}{\,\,4}\pm \delta_T$  &  
			$\pm\delta_T$   &
			$\delta_R-\tfrac{\pi}{4}\pm\delta_T$
			& 
			$0, \,2\delta_T$ \\
			&&&&\\[-3.2pt]
			\hline
			&&&&\\[-3pt]
			$\begin{aligned}[t] 
                                \lambda_1 = m_R+p_R\\
                                \lambda_2 = m_R-p_R
            \end{aligned}$&
            $\begin{aligned}[t] \lambda_1 &= \Rad_{RC}(\tfrac{-\pi}{\,\,4}+\delta_T)\\
                 \lambda_2 &= \Rad_{RC}(\tfrac{-\pi}{\,\,4}-\delta_T) \end{aligned}$ &
            $\begin{aligned}[t] \lambda_1 &= \Rad_{TC}(+\delta_T)\\
                 \lambda_2 &= \Rad_{TC}(-\delta_T) \end{aligned}$ &
            $\begin{aligned}[t] \lambda_1 &=\Rad_{RZ}(\delta_R-\tfrac{\pi}{4}+\delta_T)\\ 
				\lambda_2&=\Rad_{RZ}(\delta_R-\tfrac{\pi}{4}-\delta_T)\end{aligned}$ &
            $\begin{aligned}[t] \lambda_1 &= \Rad_{TZ}(2\delta_T)\\
                 \lambda_2 &= \Rad_{TZ}(0) \end{aligned}$ 
			\\
			&&&&\\[-3pt]
			\hline   
			&&&&\\[-3pt]
			Orthovectors:&   
			$\pm\delta_R$   & 
			$\tfrac{\pi}{4} \pm\delta_R$ &            
			$0, \, 2\delta_R$
			& 
			$\delta_T+\tfrac{\pi}{4} \pm\delta_R$ \\
			&&&&\\[-4pt]
			\hline
			&&&&\\[-3pt]
			$\begin{aligned}[t] 
                                \mu_1 = m_T+p_T\\
                                \mu_2 = m_T-p_T
            \end{aligned}$&
            $\begin{aligned}[t] \mu_1 &= \Tang_{RC}(- \delta_R)\\ 
				 \mu_2 &= \Tang_{RC}(+\delta_R)\end{aligned}$ &
            $\begin{aligned}[t] \mu_1 &= \Tang_{TC}(\tfrac{\pi}{4} - \delta_R)\\ 
				 \mu_2 &= \Tang_{TC}(\tfrac{\pi}{4} + \delta_R)\end{aligned}$ &
            $\begin{aligned}[t] \mu_1 &= \Tang_{RZ}(0)\\ 
				 \mu_2 &= \Tang_{RZ}(2\delta_R)\end{aligned}$ &
            $\begin{aligned}[t] \mu_1 &= \Tang_{TZ}(\delta_T+\tfrac{\pi}{4}- \delta_R)\\ 
				 \mu_2 &= \Tang_{TZ}(\delta_T+\tfrac{\pi}{4}+ \delta_R)\end{aligned}$ \\[-3pt]
			&&&&\\
			\hline 
		\end{tabular}
        \captionsetup{skip=8pt,width=\linewidth}
	\caption{
Each column characterizes one of the standard forms of a matrix, $A$, given by Theorem \ref{theo:std_forms}. The first two rows specify
 the matrix form and the corresponding $\Rad$ and $\Tang$ equations. The third row locates the maxima of $\Rad$ and $\Tang$.
 The fourth row specifies the rotation angle, $\gamma$, yielding the standard form via conjugation, $M_\gamma^{-1}AM_\gamma$. The remaining rows describe the eigenvectors, eigenvalues, orthovectors and orthovalues (when they are real). 
 The parameters $m_R$, $m_T$, $p$, $\tau_i$, $\rho_i$, $\delta_T$, $\delta_R$ and the eigenvalues $\lambda_i$ and orthovalues $\mu_i$ are invariant across all four standard forms (see Corollary \ref{cor:conj-shift} and Table \ref{tab:notation}). 
}
 \label{tab:std_forms}
\end{table}
	\end{landscape}
%%%%%%%%%%%%%%%%%%%%%%%%%%%
\textbf{Calculating $\delta_R$ and $\delta_T$.}
One immediate application of Theorem \ref{theo:std_forms} is that 
methods for calculating the reactivity radius, $\delta_R$, and the eigenvector separation radius, $\delta_T$, are easily developed using the $\Rad$-centered and $\Tang$-centered forms, respectively.
%%%%%%%%%%%%%%%%%%%%%%%%%%%
%%%%%%%%%%%%%%%%%%%%%%%%%%%
\begin{cor}
\label{cor:delta_R_T}
Let $A$ be a real, $2\times2$ matrix and let 
$m_R, m_T, p, p_R, p_T, \delta_R$ and $\delta_T$ be as summarized in Table \ref{tab:notation}. 
\begin{enumerate}[label=(\roman*),labelindent=0.7\parindent,align=left,labelsep=2pt,leftmargin=*]
\item If $A$ has real orthovalues, then 
%%%%%%%%%%%%%%%%%%%%%%%%%%%
\begin{equation}
\label{eq:deltaR_calc}
\cos(2\delta_R) = \frac{-m_R}{p} \quad \text{and} \quad \sin(2\delta_R) = \frac{p_T}{p}.
\end{equation}
%%%%%%%%%%%%%%%%%%%%%%%%%%%
\item If $A$ has real eigenvalues, then 
\begin{equation}\label{eq:deltaT_calc}
    \cos(2\delta_T) = \frac{-m_T}{p} \quad \text{and} \quad \sin(2\delta_T) = \frac{p_R}{p}.
\end{equation}
\end{enumerate}
\end{cor}
%%%%%%%%%%%%%%%%%%%%%%%%%%%
%%%%%%%%%%%%%%%%%%%%%%%%%%%
\begin{proof}
For part $(i)$, we use Theorem \ref{theo:std_forms} and Corollary \ref{cor:conj-shift} to assume, without loss of generality, that $A$ is in $\Rad$-centered form.
Then 
\[
\Rad_{RC}(\psi) = m_R + p\cos(2\psi) \ \mbox{ and } \ 
\Tang_{RC}(\psi) = m_T - p\sin(2\psi),
\]
and the orthovectors (zeros of $\Rad_{RC}$) are at $\pm\delta_R$ (see Table \ref{tab:std_forms}). Thus, 
%%%%%%%%%%%%%%%%%%%%%%%%%%%
\begin{equation*}
 \Rad_{RC}(\delta_R) = m_R+p\cos(2\delta_R) = 0,  
\end{equation*}
%%%%%%%%%%%%%%%%%%%%%%%%%%%
and the first equation of  
\eqref{eq:deltaR_calc} follows. 
By Theorem \ref{theo:orthostructure}, the orthovalue $\mu_2 = m_T-p_T$ is given by $\Tang_{RC}(\delta_R)$, and so
%%%%%%%%%%%%%%%%%%%%%%%%%%%
\begin{equation*}
\Tang_{RC}(\delta_R) = m_T - p\sin(2\delta_R) = m_T - p_T,
\end{equation*} 
%%%%%%%%%%%%%%%%%%%%%%%%%%%
yielding the second equation of \eqref{eq:deltaR_calc}. While these equations for $\delta_R$ follow most naturally from $\Rad$-centered form, they also follow from $\Rad$-zeroed form using Equations \eqref{eq:cosdelta_R_prep} and \eqref{eq:sindelta_R_prep} within the proof of Theorem \ref{theo:std_forms}.
%%%%%%%%%%%%%%%%%%%%%%%%%%%
\par
%%%%%%%%%%%%%%%%%%%%%%%%%%
The proof of part $(ii)$ is analogous to that of part $(i)$, with the roles of $\Rad$ and $\Tang$ interchanged. This time we assume, without loss of generality, that $A$ is in $\Tang$-centered form. Then 
\[
\Rad_{TC}(\psi)=m_R + p\sin(2\psi) \ \mbox{ and } \ \Tang_{TC}(\psi)=m_T + p\cos(2\psi),
\]
and the eigenvectors (zeros of $\Tang_{TC}$) are at $\pm\delta_T$. Thus, 
\begin{equation*}
 \Tang_{TC}(\delta_T) = m_T+p\cos(2\delta_T) = 0,   
\end{equation*}
yielding the first equation of \eqref{eq:deltaT_calc}. 
By Theorem \ref{theo:eigenstructure}, the eigenvalue $\lambda_1 = m_R+p_R$ is given by $\Rad_{RC}(\delta_T)$, and so
\begin{equation*}
\Rad_{RC}(\delta_T) = m_R + p\sin(2\delta_T) = m_R - p_R,
\end{equation*} yielding the second equation of \eqref{eq:deltaT_calc}.
\end{proof}
%%%%%%%%%%%%%%%%%%%%%%%%%%%
\par 
%%%%%%%%%%%%%%%%%%%%%%%%%%%
\Remark
Corollary \ref{cor:delta_R_T} shows that, up to a phase shift, $\Rad$ is determined by $\delta_R$ and $p_T$, the orthovector and orthovalue separation radii, respectively. Similarly, $\Tang$ is determined, up to a phase shift, by $\delta_T$ and $p_R$, the eigenvector and eigenvalue separation radii, respectively.
%%%%%%%%%%%%%%%%%%%%%%%%%%%
\par \medskip 
%%%%%%%%%%%%%%%%%%%%%%%%%%%
The value of matrices in $\Tang$-zeroed and $\Tang$-centered forms for studying reactivity has long been recognized in the literature. For example, in their first illustration of reactivity, Neubert and Caswell \cite{neubert1997alternatives} use an upper triangular matrix, which places an eigenvector on the $x$-axis.
Josi{\'c} and Rosenbaum \cite{josic2008unstable} use a similar upper triangular matrix and a $\Tang$-centered form in their study of the way transient reactivity can accumulate to yield instability in nonautonomous systems. 
The $\Tang$-centered form is also used by Harrington \textit{et.\ al.} \cite{harrington2022reactivity} to show that reactivity can lead to arbitrarily large growth of perturbations in the $\ell_1$ norm, and by Mierczy{\'n}ski \cite{mierczynski2017instability} in a non-autonomously switching linear system to show that reactivity can lead to instability.
%%%%%%%%%%%%%%%%%%%
\par
%%%%%%%%%%%%%%%%%%%
Vesipa and Ridolfi \cite{vesipa2017impact} use examples in a variant of $\Rad$-zeroed form, with an orthovector on the $y$-axis rather than the $x$-axis, (e.g.\ $A_R$ in \cite{vesipa2017impact}), but their analysis is based on the eigenstructure, not the orthostructure. We have yet to find a reference to the orthostructure based value of the $\Rad$-centered and $\Rad$-zeroed forms in the literature.
%%%%%%%%%%%%%%%%%%%
In the next section, we use these $\Rad$-focused forms to quantify the accumulated radial growth of perturbations in System \eqref{eqn:ODE}.

\section{Quantifying Maximal Amplification}
\label{sec:BoundingReactivity}
%%%%%%%%%%%%%%%%%%%%%%%%%%%
In this section, we contrast properties of the reactivity -- the maximal {\it instantaneous} rate of radial amplification in a system -- with those of the {\it cumulative} maximal amplification (Definition \ref{def:MA}).
It is well known that if $A$ is normal, then System \eqref{eqn:ODE} does not admit a reactive attractor (See Proposition \ref{prop:sym}, and the comments thereafter). For non-normal matrices with real eigenvalues, however, Theorems \ref{theo:GivenLambdas-AnyReactivity} and \ref{theo:GivenEigenvectors-AnyReactivity} show 
that the reactivity of an attractor is independent of the eigenvalues and eigenvectors.
At first, these theorems can be surprising, as they run counter to the widespread intuition that reactivity increases as eigenvectors converge (as in Figure \ref{fig:reac-increase-series3}). The intuition is restored by Theorem \ref{theo:MaxAmpUpperBound} and Corollary \ref{cor:MaxAmpUpperBound-delta_T}, where we give bounds on the maximal amplification of the system that do, indeed, depend on the eigenvector or orthovector separation of $A$. In Theorem \ref{theo:MaxAmpFormula}, we calculate the maximal amplification exactly.
%%%%%%%%%%%%%%%%%%%%%%%%%%
Analogous results can be shown for attenuating repellers by viewing them as time-reversed reactive attractors.
%%%%%%%%%%%%%%%%%%%%%%%%%%%
\par 
%%%%%%%%%%%%%%%%%%%%%%%%%%%
Recall from the introduction that Neubert and Caswell \cite{neubert1997alternatives} introduce the
maximal amplification of perturbations in a reactive attracting system as follows:
%%%%%%%%%%%%%%%%%%%%%%%%%
\begin{defn}
\label{def:MA}
If System \eqref{eqn:ODE} has a reactive attractor, the maximal amplification, $\rho_{max}$, is the maximum factor by which a perturbation can be amplified before returning to the origin:
%%%%%%%%%%%%%%%%%%%%%%%%%%%
\begin{equation}
\label{eqn:rho_max_def}
    \rho_{max} = \max_{X_0 \neq 0, t>0} \left(\frac{r(t)}{r_0}\right) 
    = \max_{X_0 \in S^1, t>0} \left( r(t)\right),
\end{equation}
%%%%%%%%%%%%%%%%%%%%%%%%%%%
where $X(t)$ is the solution with initial condition $X(0), r(t)=\|X(t)\|$, and $r_0=\|X(0)\|$.
The amplification time, $t_{max}$, is the time required to achieve this maximal amplification:
%%%%%%%%%%%%%%%%%%%%%%%%%%%
\begin{equation*}
\rho_{max} = \max_{X_0 \in S^1}  \left( r(t_{max})\right).
\end{equation*}
\end{defn}
%%%%%%%%%%%%%%%%%%%%%%%%%%%
\par 
%%%%%%%%%%%%%%%%%%%%%%%%%%%
\Remark
The second equality in Equation \eqref{eqn:rho_max_def} follows from the linearity of System \eqref{eqn:ODE}.
%%%%%%%%%%%%%%%%%%%%%%%%%%%
\par \medskip \noindent 
%%%%%%%%%%%%%%%%%%%%%%%%%%%
{\bf Reactivity is unbounded.}
%%%%%%%%%%%%%%%%%%%%%%%%%%%
To prepare for Theorems \ref{theo:GivenLambdas-AnyReactivity} and \ref{theo:GivenEigenvectors-AnyReactivity}, we show, in Lemma \ref{lem:GivenDeltas-AnyReactivity}, that the reactivity of an attractor is independent of the eigenvector and orthovector separation radii, $\delta_R$ and $\delta_T$. Lemma \ref{lem:GivenDeltas-AnyReactivity} provides a direct contrast with Theorem \ref{theo:MaxAmpUpperBound} and Corollary \ref{cor:MaxAmpUpperBound-delta_T}, in which the maximal amplification, $\rho_{max}$, of a system, is bounded by functions of $\delta_R$ and $\delta_T$, respectively, and with Theorem \ref{theo:MaxAmpFormula}, where $\rho_{max}$ is shown to be a function of $\delta_R$ and $\delta_T$.
%%%%%%%%%%%%%%%%%%%%%%%%%%%
\begin{lem}
\label{lem:GivenDeltas-AnyReactivity}
Fix any $\delta_R\in (0,\pi/2),\ \delta_T\in [0,\pi/2)$, and $\rho > 0$. The linear system (System \eqref{eqn:ODE}) with coefficient matrix $A$ given by 
$$
A = \frac{\rho}{1-\cos(2\delta_R)}
\begin{pmatrix} -\cos(2\delta_R) & 1+\cos(2\delta_T) \\ 1-\cos(2\delta_T) & -\cos(2\delta_R) \end{pmatrix}
$$ 
has real eigenvalues and real orthovalues, eigenvector separation radius $\delta_T$, reactivity radius $\delta_R$, and reactivity $\rho$.\\[-6pt]
\end{lem}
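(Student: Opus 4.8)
The plan is to verify the claim by a direct application of Theorem~\ref{theo:RadTang}: read off the radial and tangential parameters $m_R$, $m_T$, $p$, $\theta_R$ of the displayed matrix $A$, and then translate them into the four advertised properties using Corollaries~\ref{cor:reactive-rho}, \ref{cor:eigenvalues}, \ref{cor:ortho-char}, \ref{cor:ortho-calc}, and~\ref{cor:delta_R_T}. (The formula for $A$ can also be obtained constructively: imposing $m_R+p=\rho$ together with $\cos(2\delta_R)=-m_R/p$ forces $p=\rho/(1-\cos(2\delta_R))$ and $m_R=-\rho\cos(2\delta_R)/(1-\cos(2\delta_R))$; then $\cos(2\delta_T)=-m_T/p$ fixes $m_T$, and choosing $\theta_R=\pi/4$ and applying Corollary~\ref{cor:RadTang-to-A} reproduces the displayed $A$. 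For the proof it suffices to verify.)

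First I would compute the parameters. Writing $c=\rho/(1-\cos(2\delta_R))$, which is a well-defined positive number because $\delta_R\in(0,\pi/2)$ forces $\cos(2\delta_R)<1$, the two diagonal entries of $A$ both equal $-c\cos(2\delta_R)$, so by \eqref{eqn:m_R} we get $m_R=-c\cos(2\delta_R)$ and $a_{11}-a_{22}=0$. The off-diagonal entries satisfy $a_{12}+a_{21}=2c$ and $a_{21}-a_{12}=-2c\cos(2\delta_T)$, so $m_T=-c\cos(2\delta_T)$, $p=\tfrac12\sqrt{0+(2c)^2}=c$, and $\theta_R=\tfrac12\arctan(2c,0)=\pi/4$. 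In other words $A$ is already in $\Tang$-centered form, with $\theta_T=\theta_R-\pi/4=0$: indeed $A=\left(\begin{smallmatrix} m_R & -\tau_2\\ \tau_1 & m_R\end{smallmatrix}\right)$ with $\tau_{1,2}=m_T\pm p$, matching Theorem~\ref{theo:std_forms}(ii).

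Next I would extract the four properties. By Corollary~\ref{cor:reactive-rho}, the reactivity is $\rho_1=m_R+p=c(1-\cos(2\delta_R))=\rho$. Since $\delta_R\in(0,\pi/2)$ gives $|\cos(2\delta_R)|<1$, we have $p>|m_R|$, so by Corollary~\ref{cor:ortho-char}(i) $\Rad$ has two distinct zeros and by Corollary~\ref{cor:ortho-calc} $A$ has real orthovalues $\mu_{1,2}=m_T\pm\sqrt{p^2-m_R^2}$; and since $\delta_T\in[0,\pi/2)$ gives $|\cos(2\delta_T)|\le 1$, we have $p\ge|m_T|$, so by Corollary~\ref{cor:eigenvalues} $A$ has real eigenvalues (distinct when $\delta_T>0$, and a single repeated real eigenvalue in the boundary case $\delta_T=0$, where $p=|m_T|\neq0$). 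Finally $-m_R/p=\cos(2\delta_R)$ and $-m_T/p=\cos(2\delta_T)$, so by the identities \eqref{eq:deltaR_calc} and \eqref{eq:deltaT_calc} of Corollary~\ref{cor:delta_R_T} the reactivity radius equals $\delta_R$ and the eigenvector separation radius equals $\delta_T$.

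There is no genuine obstacle here; the content is a routine computation. The only points that need care are bookkeeping ones: confirming $1-\cos(2\delta_R)\neq0$ so that $A$ and $p$ are well-defined and $p\neq0$; handling the degenerate eigenvalue case $\delta_T=0$ separately; and checking the branch of $\arccos$ when asserting ``the reactivity radius equals $\delta_R$''. The last is legitimate precisely because $2\delta_R$ and $2\delta_T$ both lie in $[0,\pi)$, the range on which cosine is injective, which is exactly the range in which $2\delta_R$ and $2\delta_T$ are pinned down by Definition~\ref{defn:deltas} together with the sinusoidal forms \eqref{eqn:Rad}--\eqref{eqn:Tang}.
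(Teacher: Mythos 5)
Your proposal is correct and follows essentially the same route as the paper's proof: compute $m_R=-c\cos(2\delta_R)$, $m_T=-c\cos(2\delta_T)$, $p=c$ from Theorem \ref{theo:RadTang}, verify $|m_T|\le p$ and $|m_R|<p$ for real eigenvalues and orthovalues, recover $\delta_R$ and $\delta_T$ via Corollary \ref{cor:delta_R_T}, and obtain the reactivity as $m_R+p=\rho$ via Corollary \ref{cor:reactive-rho}. Your extra care with the degenerate case $\delta_T=0$ and the injectivity of cosine on $[0,\pi)$ is a welcome refinement of the same argument.
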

%%%%%%%%%%%%%%%%%%%%%%%%%%%
\par 
%%%%%%%%%%%%%%%%%%%%%%%%%%%
\Remark
The matrix $A$ given in Lemma \ref{lem:GivenDeltas-AnyReactivity} is in $\Tang$-centered form. By Corollary \ref{cor:conj-shift}, any conjugation of $A$ by rotation also has the same eigenvector separation radius, reactivity radius, and reactivity as $A$.
%%%%%%%%%%%%%%%%%%%%%%%%%%%
\par
%%%%%%%%%%%%%%%%%%%%%%%%%%%
\begin{proof}[Proof of Lemma \ref{lem:GivenDeltas-AnyReactivity}]
It is straightforward to check that $A$ has real eigenvalues and orthovalues by Corollaries \ref{cor:eigenvalues} and \ref{cor:ortho-char},
since $|m_T|\leq p$, and $|m_R|<p$.
Then, by Corollary \ref{cor:delta_R_T}, the eigenvector separation radius of $A$ is given by:
\begin{align*}
    \frac{1}{2}\arccos\left(\frac{-m_T}{p}\right)
   & = \frac{1}{2}\arccos\left(\frac{-(a_{21}-a_{12})}{\sqrt{(a_{11}-a_{22})^2+(a_{12}+a_{21})^2}}\right)\\
   & = \frac{1}{2}\arccos\left(\frac{2\cos(2\delta_T)}{\sqrt{4}}\right) \\ 
   & = \delta_T
\end{align*}
Similarly, the reactivity radius of System \ref{eqn:ODE} is given by:
\begin{align*}
    \frac{1}{2}\arccos\left(\frac{-m_R}{p}\right)
   & = \frac{1}{2}\arccos\left(\frac{-(a_{11}+a_{22})}{\sqrt{(a_{11}-a_{22})^2+(a_{12}+a_{21})^2}}\right)\\
   & = \frac{1}{2}\arccos\left(\frac{2\cos(2\delta_R)}{\sqrt{4}}\right)\\ 
   & = \delta_R
\end{align*}
Finally, by Theorem \ref{theo:reactive-rho=rho_1}, the reactivity of System \ref{eqn:ODE} is given by:
\begin{align*}
    m_R+p & =
    \frac{1}{2}(a_{11}+a_{22})+\frac{1}{2}{\sqrt{(a_{11}-a_{22})^2+(a_{12}+a_{21})^2}}\\
   & = \frac{\rho}{1-\cos(2\delta_R)}\left(-\cos(2\delta_R)+1\right)\\
   & = \rho
\end{align*}
\end{proof}
%%%%%%%%%%%%%%%%%%%%%%%%%%%
\par 
%%%%%%%%%%%%%%%%%%%%%%%%%%%
\Remark
The matrix $A$ in Lemma \ref{lem:GivenDeltas-AnyReactivity} was constructed geometrically using the following steps. Begin by constructing radial and tangential functions in $\Tang$-centered form from the `base' functions $\sin(2\theta)$ and $\cos(2\theta)$. Adjust by applying vertical shifts of $-\cos(2\delta_R)$ and $-\cos(2\delta_T)$, respectively, to ensure that $\Rad(\tfrac{\pi}{4}\pm \delta_R)=0$ and $\Tang(\pm\delta_T)=0$. Then scale both functions by $p= \rho / (1-\cos(2\delta_R))$ to ensure that $\Rad(\theta)$ has the desired maximum, $\rho$. Finally, with these $\Rad$ and $\Tang$, apply Proposition \ref{prop:RadTang-to-A} to determine the coefficients of $A$.
%%%%%%%%%%%%%%%%%%%%%%%%%%%
\par 
%%%%%%%%%%%%%%%%%%%%%%%%%%%
\begin{theo}
\label{theo:GivenLambdas-AnyReactivity}
Given any real $\lambda_2 \leq \lambda_1 < 0$ and any $\rho > 0$, there is a real $2\times 2$ matrix $A$, with eigenvalues $\lambda_1$ and $\lambda_2$, for which System \ref{eqn:ODE} has an attractor with reactivity $\rho$.
\end{theo}
%%%%%%%%%%%%%%%%%%%%%%%%%%%
%%%%%%%%%%%%%%%%%%%%%%%%%%%
\begin{proof}
To construct a matrix $A$ with eigenvalues $\lambda_2 \leq \lambda_1 < 0$, note that, by Corollary \ref{cor:delta_R_T}, $\delta_R$ must be given by
\[\delta_R = \frac{1}{2}\arccos\left(\frac{-m_R}{p}\right)\]
where $m_R = \tfrac{1}{2}\left(\lambda_1+\lambda_2\right)<0$ (by assumption), and $p=\rho-m_R>0$ (by Theorem \ref{theo:reactive-rho=rho_1}), so that $m_R/p < 0$ and, therefore, $\delta_R\in(0,\pi/4)$.
%%%%%%%%%%%%%%%%%%%%%%%
\par
%%%%%%%%%%%%%%%%%%%%%%
Similarly, by Corollary \ref{cor:delta_R_T}, $\delta_T$ is given by 
\[\delta_T = \frac{1}{2}\arcsin\left(\frac{p_R}{p}\right)\]
where $p_R = \lambda_1-m_R\ge0$, so that $p_R/p > 0$, and we can choose $\delta_T \in [0,\pi/4)$.
%%%%%%%%%%%%%%%%%%%%%%%
\par
%%%%%%%%%%%%%%%%%%%%%%
Now we have $\delta_R, \delta_T$ and $\rho$, and so the result follows from Lemma \ref{lem:GivenDeltas-AnyReactivity}. 
\end{proof}
%%%%%%%%%%%%%%%%%%%%%%%%%%%
\par
%%%%%%%%%%%%%%%%%%%%%%%%%%%
\Remark
Once again, the matrix $A$ given by Lemma \ref{lem:GivenDeltas-AnyReactivity} in Theorem \ref{theo:GivenLambdas-AnyReactivity} is in $\Tang$-centered form, and, by Corollary \ref{cor:conj-shift}, any conjugation of $A$ by rotation will have the same eigenvalues, $\lambda_i$, and reactivity, $\rho$, as $A$.
%%%%%%%%%%%%%%%%%%%%%%%%%%%
\par 
%%%%%%%%%%%%%%%%%%%%%%%%%%%
\begin{theo}
\label{theo:GivenEigenvectors-AnyReactivity}
Given any non-orthogonal, linearly independent pair of vectors $V_1,V_2 \in \mathbb{R}^2$ and any $\rho > 0$, there is a real $2\times 2$ matrix $B$ with eigenvectors $V_1,V_2$, for which the corresponding System \eqref{eqn:ODE} has an attractor with reactivity $\rho$.
\end{theo}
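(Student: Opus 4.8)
The plan is to first build a matrix in $\Tang$-centered form that already has the correct eigenvector separation, a negative pair of eigenvalues (so the origin attracts), and reactivity exactly $\rho$, and then conjugate it by a rotation to carry its eigenvectors onto $V_1$ and $V_2$. To set up, write $V_i$ in polar form and let $\theta_i\in\R\Mod{\pi}$ be the angle of the line $\mathrm{span}(V_i)$, and let $\beta$ be the angle between these two lines. Because $V_1,V_2$ are non-parallel the lines are distinct, so $\beta>0$; because they are non-orthogonal, $\beta\neq\pi/2$; hence $\beta\in(0,\pi/2)$ and $\sin\beta<1$. This strict inequality is exactly the hypothesis that will be needed to make the origin an attractor, consistent with Corollary~\ref{cor:nonreactive} (orthogonal eigenvectors rule out a reactive attractor).

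Next I would apply Lemma~\ref{lem:GivenDeltas-AnyReactivity} with eigenvector separation radius $\delta_T=\beta/2\in(0,\pi/4)$, reactivity $\rho$, and a reactivity radius $\delta_R\in(0,\pi/4)$ chosen small enough that $\cos(2\delta_R)>\sin\beta$ (possible since $\sin\beta<1$), calling the resulting matrix $A$. By construction its reactivity is $\rho_1=m_R+p=\rho>0$; from the explicit $\Rad$ and $\Tang$ parameters of $A$ (equivalently, from Corollaries~\ref{cor:pR} and~\ref{cor:delta_R_T}) one reads off $m_R=-p\cos(2\delta_R)$ and $p_R=p\sin(2\delta_T)=p\sin\beta$, so its larger eigenvalue is $\lambda_1=m_R+p_R=p\bigl(\sin\beta-\cos(2\delta_R)\bigr)<0$, and hence $\lambda_2<\lambda_1<0$ too. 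Thus $A$ gives a reactive attractor with reactivity $\rho$, and since $A$ is in $\Tang$-centered form (the remark following Lemma~\ref{lem:GivenDeltas-AnyReactivity} together with Table~\ref{tab:std_forms}), its real eigenvectors sit at angles $\pm\delta_T=\pm\beta/2$.

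Finally I would conjugate by rotation. The two eigenlines of $A$, at angles $\pm\beta/2$, are separated by the angle $\beta$, which matches the separation of $\mathrm{span}(V_1)$ and $\mathrm{span}(V_2)$; hence by Lemma~\ref{lem:conj-shift}(i) — which translates eigenvector angles by $-\gamma$ under $A\mapsto M_\gamma^{-1}AM_\gamma$ — there is a rotation angle $\gamma$ for which $B:=M_\gamma^{-1}AM_\gamma$ has eigenvector angles $\theta_1,\theta_2$, i.e.\ eigendirections $\mathrm{span}(V_1),\mathrm{span}(V_2)$, so $V_1,V_2$ are eigenvectors of $B$. By Corollary~\ref{cor:conj-invariance}, conjugation by rotation leaves the eigenvalues ($\lambda_1,\lambda_2<0$, so the origin still attracts) and the reactivity ($\rho_1=\rho>0$, so $B$ is still reactive) unchanged, so $B$ is the desired coefficient matrix.

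The main obstacle is the attractor constraint: Lemma~\ref{lem:GivenDeltas-AnyReactivity} by itself only yields real eigenvalues, not negative ones, so the argument must actively tune $\delta_R$ (how far the orthovectors sit from the eigenvectors) to push both eigenvalues below zero while keeping $\delta_T=\beta/2$ and the reactivity $\rho$ pinned — and the key point is that these three requirements can be met simultaneously exactly because $\sin\beta<1$, i.e.\ $\beta\neq\pi/2$; the construction degenerates precisely in the orthogonal case. Everything else is routine $\R\Mod{\pi}$ bookkeeping to align two equally-separated pairs of lines by a single rotation (and a harmless relabeling of which eigenvector receives which eigenvalue, which the statement does not constrain).
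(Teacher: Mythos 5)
Your proposal is correct and follows essentially the same route as the paper: invoke Lemma \ref{lem:GivenDeltas-AnyReactivity} with $\delta_T$ set to half the angle between the eigenlines, shrink $\delta_R$ until $\cos(2\delta_R)>\sin(2\delta_T)$ so that $\lambda_1=m_R+p_R<0$ (the paper phrases this same condition as $\delta_R\in(0,|\delta_T-\pi/4|)$), and then conjugate by the rotation aligning the eigenlines with $V_1,V_2$, using Corollary \ref{cor:conj-invariance} to preserve eigenvalues and reactivity. The only cosmetic difference is that you normalize to the acute angle $\beta\in(0,\pi/2)$ while the paper allows $\delta_T\in(0,\pi/2)$ and writes the explicit rotation angle $\gamma=-\tfrac{1}{2}(\theta_1+\theta_2)$.
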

%%%%%%%%%%%%%%%%%%%%%%%%%%%
%%%%%%%%%%%%%%%%%%%%%%%%%%%
\begin{proof}
Let $\theta_1, \theta_2 \in [0,2\pi)$ 
measure the angles from the $\theta=0$ axis to $V_1$ and $V_2$, respectively.
Without loss of generality, assume the vectors $V_1,V_2$ are labeled so that $\theta_1-\theta_2 \in (0,\pi/2) \cup (\pi/2,\pi)$, and 
let $\delta_T=\frac{1}{2}(\theta_1-\theta_2) \in (0,\pi/4)\cup (\pi/4,0)$. 
The two cases $\delta_T \in (0,\pi/4)$ and $\delta_T \in (\pi/4, \pi/2)$ correspond to the cases $m_T<0$ and $m_T>0$, respectively.
In either case, choose $\delta_R\in (0, |\delta_T-\pi/4|)$ to guarantee a reactive region that does not contain eigenvectors.
%%%%%%%%%%%%%%%%%%%%%%%
Now we have $\delta_R, \delta_T$ and $\rho$, and can apply Lemma \ref{lem:GivenDeltas-AnyReactivity}
to find a matrix, $A$, in $\Tang$-centered form with real eigenvalues, eigenvectors at angles $\pm \delta_T$, reactivity radius $\delta_R$, and reactivity $\rho$, which we will then conjugate with the appropriate rotation to place the eigenvectors at $V_1, V_2$.
%%%%%%%%%%%%%%%%%%%%%%%
\par
%%%%%%%%%%%%%%%%%%%%%%
To confirm that the eigenvalues of $A$
are both negative, note that, by Corollary \ref{cor:delta_R_T}, 
\begin{align*}
    m_R & = -p\cos(2\delta_R) \mbox{ and } p_R = p\sin(2\delta_T).
\end{align*}
Therefore the largest eigenvalue, given by 
\begin{align*}
    m_R+p_R &= -p\big(\cos(2\delta_R)-\sin(2\delta_T)\big)\\
            &= -p\big(\cos(2\delta_R)-\cos(2(\delta_T-\pi/4)\big).
\end{align*}
Since $\delta_R\in (0,|\delta_T-\pi/4|)\subseteq (0,\pi/4)$, 
\begin{align*}
    \delta_R & < |\delta_T-\pi/4| \\
    & \implies \cos(2\delta_R) > \cos(2|\delta_T-\pi/4|) = \cos(2(\delta_T-\pi/4))\\ 
    & \implies m_R+p_R<0,
\end{align*}
and hence both eigenvalues are negative.

Finally, conjugating by rotation to place the eigenvectors at $V_1$ and $V_2$, as desired, without changing the eigenvalues or the reactivity, 
let $B=M_{\gamma}^{-1}AM_{\gamma}$, where $\gamma=-\frac{1}{2}(\theta_1+\theta_2)$.
Then, by Lemma \ref{lem:horizontal_shift}, 
\begin{align*}
    \Tang_B(\theta_1) & =\Tang_A\left(\theta_1-\tfrac{1}{2}(\theta_1+\theta_2)\right)
    =\Tang_A\left(\tfrac{1}{2}(\theta_1-\theta_2)\right)
    =\Tang_A(\delta_T)=0, \mbox{ and }\\
   \Tang_B(\theta_2) & =\Tang_A\left(\theta_2-\tfrac{1}{2}(\theta_1+\theta_2) \right)
    =\Tang_A\left(-\tfrac{1}{2}(\theta_1-\theta_2)\right)
    =\Tang_A(-\delta_T)=0.
\end{align*}
Thus System \ref{eqn:ODE}, with matrix $B$, has an attractor with eigenvectors $V_1$ and $V_2$ and reactivity $\rho$.
\end{proof}
%%%%%%%%%%%%%%%%%%%%%%%%%%%
\par \medskip \noindent 
%%%%%%%%%%%%%%%%%%%%%%%%%%%
\textbf{Maximal amplification is bounded.}
In the previous subsection we showed that the instantaneous reactivity, $\rho$, can be arbitrarily large in a reactive attractor, independently of the eigenvalues or (non-orthogonal) eigenvectors. In this subsection we show that when System \eqref{eqn:ODE} has a reactive attractor, the accumulation of that reactive growth is, nevertheless, bounded.
We find upper bounds on the maximal amplification, $\rho_{max}$, in terms of the reactivity radius $\delta_R$ (Theorem \ref{theo:MaxAmpUpperBound}), and in terms of the eigenvector separation radius $\delta_T$ (Corollary \ref{cor:MaxAmpUpperBound-delta_T}). In the next subsection we describe how to calculate $\rho_{max}$ exactly (Theorem \ref{theo:MaxAmpFormula}).
%%%%%%%%%%%%%%%%%%%%%%%%%%%
%%%%%%%%%%%%%%%%%%%%%%%%%%%
\begin{theo}
\label{theo:MaxAmpUpperBound}
If System \eqref{eqn:ODE} has a reactive attractor at the origin, with reactivity radius 
$\delta_R$, 
then the maximal amplification, $\rho_{max}$ is bounded above by 
\begin{align*}
\rho_{max} < \frac{1}{\cos(2\delta_R)} = -\frac{p}{m_R}.
\end{align*}
\end{theo}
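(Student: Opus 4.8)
The plan is to work in the $\Rad$-zeroed standard form, in which one edge of the reactive cone lies along the $x$-axis, and to exploit the fact that the first coordinate of a trajectory is monotone while the trajectory sits in that cone.

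\emph{Step 1: reduce to a normal form.} Since $\rho_{\max}$ depends only on the ratios $\|X(t)\|/\|X_0\|$, it is unchanged by conjugating $A$ with any orthogonal matrix: rotations preserve all of the $\Rad$, $\Tang$ data by Corollary~\ref{cor:conj-invariance}, and a reflection additionally leaves every $r(t)$ unchanged. A reactive attractor has $m_R<0$ and $\rho_1=m_R+p>0$, so $p>|m_R|$; hence $\Rad$ has two distinct zeros, $\delta_R\in(0,\pi/4)$ with $\cos(2\delta_R)=-m_R/p$ by Corollary~\ref{cor:delta_R_T}, and $\det A=\mu_1\mu_2>0$. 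One checks that $\Tang$ is single-signed on the reactive set (its zeros, if any, are the eigenvector angles, which lie in the attenuating region for an attractor) and that $m_T\neq0$ (else $\det A<0$), so after a rotation and, if necessary, a reflection we may take $A$ in $\Rad$-zeroed form with $\Tang$ positive on the reactive set, i.e.
$$A=\begin{pmatrix}0 & -\mu_2\\ \mu_1 & 2m_R\end{pmatrix},\qquad \mu_1\ge\mu_2>0,$$
with reactive set $\psi\in(0,2\delta_R)\subset(0,\pi/2)$ in the standard-form angle $\psi$, and $\Rad<0$ off the closure of that set.

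\emph{Step 2: monotonicity of $x_1$.} In this form $\dot x_1=-\mu_2 x_2$, so on the upper copy of the reactive cone, where $0<\psi<\pi/2$ and hence $x_2=r\sin\psi>0$, the coordinate $x_1$ is \emph{strictly decreasing}. Fix any $X_0\neq0$ and suppose $r_{\max}:=\max_{t\ge0}r(t)>r_0$. The maximum is attained at some $t_{\max}>0$ with $\Rad(\psi(t_{\max}))=0$, so $\psi(t_{\max})$ is an orthovector angle; since $r$ is increasing just before $t_{\max}$ and $\Tang$ is nonzero at the orthovector angles, the trajectory is \emph{leaving} the reactive cone there, which forces $\psi(t_{\max})=2\delta_R$ (replacing $X$ by $-X$ if needed), so $x_1(t_{\max})=r_{\max}\cos(2\delta_R)$. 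Let $\sigma\in[0,t_{\max})$ be the last time before $t_{\max}$ at which the trajectory lies on the $x$-axis — equivalently, its last entry into the reactive cone; take $\sigma=0$ if $X_0$ already lies in the closed cone. On $[\sigma,t_{\max}]$ the trajectory stays in the cone, so $x_1$ strictly decreases and $r(\sigma)=x_1(\sigma)>x_1(t_{\max})=r_{\max}\cos(2\delta_R)$. On $[0,\sigma]$ the trajectory has not yet amplified past $r_0$: for real eigenvalues it lies in the attenuating region throughout, and for complex eigenvalues any gain in an earlier pass through the cone is more than cancelled by the intervening full loop around the attenuating region (a full angular loop multiplies $r$ by $e^{\pi m_R/\sqrt{\tau_1\tau_2}}<1$), so the first exit is already the global radial maximum and $r(\sigma)\le r_0$. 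Chaining the two inequalities gives $r_{\max}\cos(2\delta_R)<r_0$; taking the supremum over $X_0\neq0$ yields $\rho_{\max}<1/\cos(2\delta_R)=-p/m_R$.

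\emph{Main obstacle.} The monotonicity estimate itself is immediate; the care lies in Step~1 — verifying that a reflection composed with a rotation restores $\Rad$-zeroed form while flipping the sign of $m_T$, so that one may assume $\mu_1\ge\mu_2>0$ and that the reactive cone is traversed counterclockwise through the upper half-plane — and in justifying that the first exit from the reactive cone is already the global maximum of $r$ (hence $r(\sigma)\le r_0$); the latter is the only place where the per-loop contraction, and therefore the hypothesis $m_R<0$, is genuinely used.
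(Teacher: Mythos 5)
Your proof is correct and follows essentially the same route as the paper's: pass to $\Rad$-zeroed form, observe that $\dot{x}_1=-\mu_2 x_2<0$ while the trajectory crosses the reactive cone, and compare the exit point at angle $2\delta_R$ with the point $Z=(1,\tan(2\delta_R))^T$ via $\|W\|=x_1(t_{\max})/\cos(2\delta_R)$. The only differences are cosmetic refinements: you handle the $\Tang<0$ case by a reflection rather than the paper's rotation, and you supply an explicit justification (via the last-entry time $\sigma$ and the per-half-loop contraction factor $e^{\pi m_R/\sqrt{\tau_1\tau_2}}$) for the step the paper asserts without detail, namely that the maximal amplification is achieved by a trajectory entering the reactive cone at radius at most $r_0$.
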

%%%%%%%%%%%%%%%%%%%%%%%%%%%
\par
%%%%%%%%%%%%%%%%%%%%%%%%%%%
\begin{figure}[t!]
    \centering
    \includegraphics[width=\linewidth]{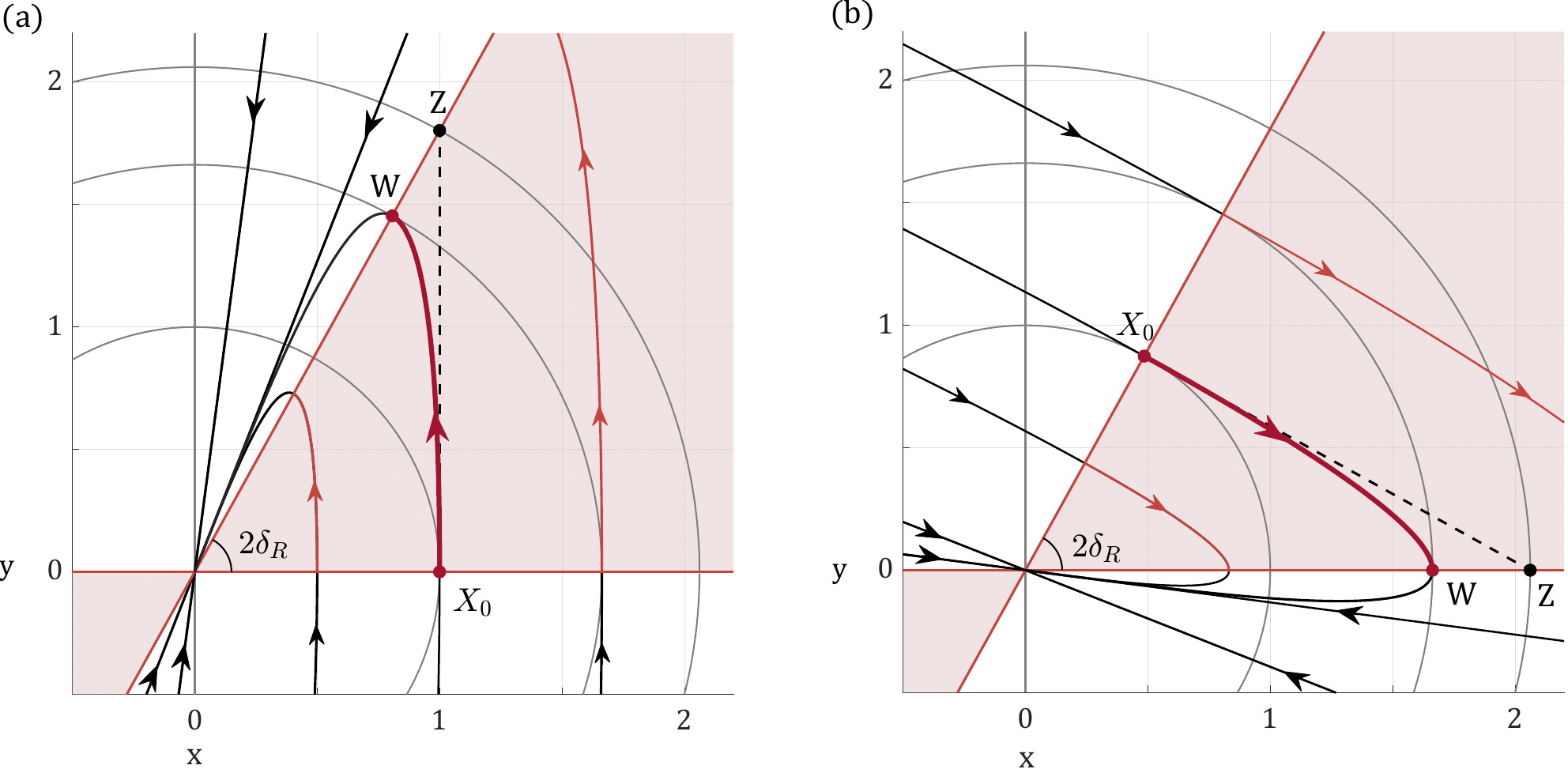}
    \caption{Visual representation of Theorem \ref{theo:MaxAmpUpperBound}. Phase portraits of System \eqref{eqn:ODE} with (a)
    $A=\left(\begin{smallmatrix}
        -1 & -8\\ 0 & -3
    \end{smallmatrix}\right)$
    and (b)
    $B=\left(\begin{smallmatrix}
        -1 & +8\\ 0 & -3
    \end{smallmatrix}\right)$ are plotted in $\Rad$-zeroed form. 
    The coefficient matrices in $\Rad$-zeroed form are given by
    $A_{RZ} \approx \left(\begin{smallmatrix}
        0 & -0.4\\ 7.6 & -4
    \end{smallmatrix}\right)$
    and
    $B_{RZ} \approx \left(\begin{smallmatrix}
        0 & 7.6\\ -0.4 & -4
    \end{smallmatrix}\right)$.
    The two systems have the same eigenvalues and the same reactive radius $\delta_R$, and, being in $\Rad$-zeroed form, they have the same reactive region. The difference between the systems is the sign of $\Tang$, so that trajectories traverse the reactive region counterclockwise in (a), and clockwise in (b).
    \textbf{(a)} The trajectory with initial condition $X_0 = (1,0)^T$ exits the reactive region at the point $W$, which is closer to the origin than the point $Z$. Since $Z$ has $x$-coordinate $1$, $\|Z\|=1/\cos(2\delta_R)$, and so $\rho_{max}=\|W\| \leq  1/\cos(2\delta_R)$. 
    \textbf{(b)} The trajectory with initial condition $X_0=(\cos(2\delta_R),\sin(2\delta_R))^T$ exits the reactive region at the point $W$ on the $x$-axis, and, by similar triangles in (a) and (b), $\rho_{max}=\|W\| \leq \|Z\| = 1/\cos(2\delta_R)$. }
    \label{fig:MaxAmpUpperBound}
\end{figure}
%%%%%%%%%%%%%%%%%%%%%%%%%%%
\par 
%%%%%%%%%%%%%%%%%%%%%%%%%%%
\begin{proof}
The proof is encapsulated in Figure \ref{fig:MaxAmpUpperBound}.
Without loss of generality, assume that $A$ is in $\Rad$-zeroed form, so, as in Table \ref{tab:std_forms}, the orthovector directions bounding the reactive region
are at $\theta=0$ and $\theta=2\delta_R$, where $\delta_R \in (0,\pi/4)$ since the origin is a reactive attractor. 
The reactive region is given by $\theta \in (0,2\delta_R) \cup (\pi,\pi+2\delta_R)$.  Trajectories experience radial amplification during their passage through the reactive region, and radial attenuation elsewhere.  Since the origin is attracting, we know there is net attenuation, so the maximal amplification is achieved by exactly traversing a single component of the reactive region. 
%%%%%%%%%%%%%%%%%%%%%%%%%%%
\par 
%%%%%%%%%%%%%%%%%%%%%%%%%%%
Consider the case $\Tang > 0$ on the reactive region (we know $\Tang$ has no zeros on the reactive region since $A$ has no positive eigenvalues). Then the orthovalues, $\mu_1=\Tang(0)$ and $\mu_2=\Tang(2\delta_R)$, are positive and correspond to counter clockwise motion through the reactive region.  To fix ideas, consider the trajectory with initial condition
$X_0= (1,0)^T$ on the ``entrance" boundary of the reactive region. 
This trajectory, $X(t)$, achieves maximal amplification at the point $W$, where it reaches the 
``exit" boundary of the reactive region at 
$\theta=2\delta_R$. Thus $\rho_{max} = \lVert W \rVert$, since $\lVert X_0 \rVert=1$.
%%%%%%%%%%%%%%%%%%%%%%%%%%%
\par 
%%%%%%%%%%%%%%%%%%%%%%%%%%%
Since $X(t)$ solves System \eqref{eqn:ODE}, which we have assumed is in $\Rad$-zeroed form, the vector field, $dX/dt$, is given by
\begin{align*}
\frac{dX}{dt} = AX= 
\begin{pmatrix} 0 & -\mu_2 \\ \mu_1 & 2m_R\end{pmatrix}
\begin{pmatrix} x \\ y \end{pmatrix}
=
\begin{pmatrix} -\mu_2 y \\ \mu_1 x +2m_Ry \end{pmatrix}.
\end{align*}
Thus, $\frac{dX}{dt}\big|_{X=X_0} = (0,\mu_1)^T$ and points orthogonally into the reactive region (as expected since $X_0$ is an orthovector with orthovalue $\mu_1$).
As $X(t)$ then traverses the reactive region, the first component of $\frac{dX}{dt}$ is negative (since $y>0$ and $\mu_2 >0$). 
Hence, the $x$-coordinate of the point $W$ is less than the $x$-coordinate of $X_0$, and $W$ is closer to the origin along the line $\theta=2\delta_R$ than the point $Z$ where the line $x=1$ intersects $\theta=2\delta_R$.
See Figure \ref{fig:MaxAmpUpperBound}.
 Thus, by trigonometry and Corollary \ref{cor:delta_R_T}, the maximal amplification satisfies:
\begin{align*}
\rho_{max} = \lVert W \rVert < \lVert Z \rVert =  \frac{1}{\cos(2\delta_R)} = -\frac{p}{m_R}.
\end{align*}
%%%%%%%%%%%%%%%%%%%%%%%%%%%
\par 
%%%%%%%%%%%%%%%%%%%%%%%%%%%
The case when $\Tang < 0$ on the reactive region is proved similarly. In this case trajectories traverse the reactive region clockwise. One approach is to choose an initial condition on the line $\theta=2\delta_R$, as in Figure \ref{fig:MaxAmpUpperBound}b. The calculations are slightly more delicate, but still straightforward. Another approach is to first conjugate with rotation to re-locate the reactive region to $\theta \in (-2\delta_R,0) \cup (\pi-2\delta_R,\pi)$, and then choose $X_0= (1,0)^T$ as before. 
\end{proof}
%%%%%%%%%%%%%%%%%%%%%%%%%%%
\par 
%%%%%%%%%%%%%%%%%%%%%%%%%%%
\Remark
The upper bound in Theorem \ref{theo:MaxAmpUpperBound} is sharp since the point $W$ can be arbitrarily close to the point $Z$. For example, if $m_T>0$ and $\lambda_1 = 0$, then $\theta_1=2\delta_R \Mod{\pi}$, $\theta_2=\pi/2 \Mod{\pi}$, $W=Z$ and $\rho_{max}=-\frac{p}{m_R}$. Thus, if $\lambda_1$ approaches $0$, $\rho_{max}$ approaches $-\frac{p}{m_R}$. 
Moreover, while the maximal amplification is bounded in terms of $\delta_R$, if $\delta_R$ approaches $\pi/4$, then the sharp upper bound approaches infinity. Thus, linear effects alone can lead to transients of any size.
%%%%%%%%%%%%%%%%%%%%%%%%%%%
\par \medskip 
%%%%%%%%%%%%%%%%%%%%%%%%%%%
Theorem \ref{theo:MaxAmpUpperBound} gives an upper bound for $\rho_{max}$ based on the reactivity radius, $\delta_R$. In a reactive attractor with real eigenvalues, a bound on the reactivity radius can be determined, in turn, from the eigenvector separation radius, $\delta_T$. In the following corollary of Theorem \ref{theo:MaxAmpUpperBound}, we use this fact to find an alternative (weaker) upper bound on $\rho_{max}$ in terms of $\delta_T$.
%%%%%%%%%%%%%%%%%%%%%%%%%%%
\par 
%%%%%%%%%%%%%%%%%%%%%%%%%%%
\begin{cor}
\label{cor:MaxAmpUpperBound-delta_T}
If System \eqref{eqn:ODE} has a reactive attractor at the origin, with real eigenvalues, and eigenvector separation  radius $\delta_T \in (0, \pi/2)$, then 
\begin{align*}
\rho_{max} <  \frac{1}{\sin (2\delta_T)}=\frac{p}{p_R}.
\end{align*}
\end{cor}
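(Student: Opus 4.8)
The plan is to obtain this as an immediate consequence of Theorem \ref{theo:MaxAmpUpperBound} by comparing the reactivity radius $\delta_R$ with the eigenvector separation radius $\delta_T$. Theorem \ref{theo:MaxAmpUpperBound} already gives $\rho_{max} < 1/\cos(2\delta_R)$, so it suffices to show $\cos(2\delta_R) \ge \sin(2\delta_T) > 0$; taking reciprocals then yields the claimed estimate, and the identity $1/\sin(2\delta_T) = p/p_R$ is immediate from Corollary \ref{cor:delta_R_T}.

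First I would record the two identities from Corollary \ref{cor:delta_R_T} that are relevant here: since $A$ has real eigenvectors and (being a reactive attractor) real orthovectors, $\cos(2\delta_R) = -m_R/p$ and $\sin(2\delta_T) = p_R/p$. Because the origin is an attractor, both eigenvalues are negative, so $m_R = \tfrac12(\lambda_1+\lambda_2) < 0$, and reactivity forces $0 < \rho_1 = m_R + p$, hence $p > -m_R > 0$; in particular $\cos(2\delta_R) \in (0,1)$, so $\delta_R \in (0,\pi/4)$ and the hypotheses of Theorem \ref{theo:MaxAmpUpperBound} are met. Moreover $\delta_T \in (0,\pi/2)$ is exactly the condition guaranteeing $\sin(2\delta_T) > 0$, equivalently $p_R > 0$ (the eigenvalues are distinct), so that the asserted upper bound is finite.

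The key comparison is the one-line computation
\[
-m_R - p_R \;=\; -\tfrac12(\lambda_1+\lambda_2) - \tfrac12(\lambda_1-\lambda_2) \;=\; -\lambda_1 \;>\; 0,
\]
using $\lambda_1 < 0$. Dividing by $p>0$ gives $\cos(2\delta_R) > \sin(2\delta_T)$, and therefore
\[
\rho_{max} \;<\; \frac{1}{\cos(2\delta_R)} \;<\; \frac{1}{\sin(2\delta_T)} \;=\; \frac{p}{p_R},
\]
where the final equality is again Corollary \ref{cor:delta_R_T}. There is essentially no hard step; the only care needed is bookkeeping — verifying that "reactive attractor" indeed delivers $\delta_R\in(0,\pi/4)$ so Theorem \ref{theo:MaxAmpUpperBound} is applicable, and that $\delta_T\in(0,\pi/2)$ is precisely what keeps $p_R>0$. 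One could alternatively argue geometrically in $\Rad$-zeroed form, noting that the exit boundary of the reactive region sits at angle $2\delta_R$, which is nearer the $y$-axis than the angle $\pi/2 - 2\delta_T$, but the algebraic route via Corollary \ref{cor:delta_R_T} is the cleanest.
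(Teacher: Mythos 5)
Your proof is correct, and it reaches the key inequality $\cos(2\delta_R) > \sin(2\delta_T)$ by a genuinely different route than the paper. The paper argues geometrically: since the eigenvalues are negative, the eigendirections (zeros of $\Tang$) must lie in the attenuating region, which forces the angular containment $2\delta_R \in \left(0, \left|\pi/2 - 2\delta_T\right|\right)$ after splitting into the cases $0<\delta_T<\pi/4$ and $\pi/4<\delta_T<\pi/2$; monotonicity of cosine then gives $\cos(2\delta_R) > \cos\left(\left|\pi/2-2\delta_T\right|\right) = \sin(2\delta_T)$. You instead convert both sides to eigenvalue data via Corollary \ref{cor:delta_R_T} and observe that $-m_R - p_R = -\lambda_1 > 0$, which is a one-line algebraic identity requiring no case analysis (the excluded value $\delta_T=\pi/4$, where orthogonal eigenvectors would preclude a reactive attractor by Corollary \ref{cor:nonreactive}, is handled automatically since strictness comes from $\lambda_1<0$ rather than from an angular gap). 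Your bookkeeping that the hypotheses of Theorem \ref{theo:MaxAmpUpperBound} are met --- $0 < -m_R < p$ giving $\delta_R \in (0,\pi/4)$ --- is also correct and is something the paper leaves implicit. The trade-off: the paper's version makes visible the geometric picture that the reactive cone must fit strictly inside the cone between the eigendirections, which is the intuition echoed in the remark following the corollary, while your version is shorter, uniform in $\delta_T$, and makes the source of strictness ($\lambda_1 \neq 0$) completely explicit.
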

%%%%%%%%%%%%%%%%%%%%%%%%%%%
%%%%%%%%%%%%%%%%%%%%%%%%%%%
\begin{proof}
Since System \eqref{eqn:ODE} has real, negative eigenvalues, we know that $\Tang(\theta)\neq 0$ when $\Rad(\theta)>0$, so $\Tang(\theta)$ is single signed on the reactive region. Thus, by splitting into the cases $0<\delta_T< \pi/4$ and $\pi/4<\delta_T <\pi/2$ (corresponding to $m_T<0$ and $m_T>0$, respectively) and considering $\Rad$ and $\Tang$ in $\Tang$-zeroed form, it is straightforward to see that 
\begin{align*}
2\delta_R \in \left(0, \left|\pi /2 - 2\delta_T\right| \right) \subset \left(0,\pi/2 \right).
\end{align*}
Since $\cos(\theta)$ is decreasing on $(0,\pi/2)$, by Theorem \ref{theo:MaxAmpUpperBound} and Corollary \ref{cor:delta_R_T},
\begin{align*}
\rho_{max} < \frac{1}{\cos (2\delta_R)} < 
\frac{1}{\cos (\left|\pi /2 - 2\delta_T\right|)} =
\frac{1}{\sin (2\delta_T)}=\frac{p}{p_R}.
\end{align*}
\end{proof}
%%%%%%%%%%%%%%%%%%%%%%%%%%%
\par 
%%%%%%%%%%%%%%%%%%%%%%%%%%%
\Remark
    Corollary \ref{cor:MaxAmpUpperBound-delta_T} supports our intuition that, as the eigenvectors in a reactive attractor converge to each other, there is more room for a large reactive region, and the potential for reactivity driven amplification increases.
%%%%%%%%%%%%%%%%%%%%%%%%%%%
\par \medskip \noindent
%%%%%%%%%%%%%%%%%%%%%%%%%%%
\textbf{Calculating maximal amplification exactly.}
Theorem \ref{theo:MaxAmpUpperBound} and Corollary \ref{cor:MaxAmpUpperBound-delta_T} are useful for providing a quick upper bound on the maximal amplification, $\rho_{max}$, admitted by System \ref{eqn:ODE}. 
In an upcoming paper, we prove Theorem \ref{theo:MaxAmpFormula},
stated below, showing that $\rho_{max}$ can be calculated exactly by integrating $\Rad(\theta(t))$, with respect to time, as a trajectory traverses the reactive region \cite{broda20xxMaxAmp}. The formula for $\rho_{max}$ can be written in a variety of ways, each shedding a different light on what determines the maximal amplification.
%%%%%%%%%%%%%%%%%%%%%%%%%%%
\par 
%%%%%%%%%%%%%%%%%%%%%%%%%%%
Equation \eqref{eqn:rho-max-lambdas} shows how the balancing act between the transient and asymptotic dynamics of System \eqref{eqn:ODE} that yields the maximal amplification,  $\rho_{max}$, can be captured by an intertwining of the eigenvalues and orthovalues of $A$.  By contrast, Equation \eqref{eqn:rho-max-deltas} shows that, in the case of real eigenvalues, that same balance is captured by the eigenvector and orthovector separations, $\delta_T$ and $\delta_R$.
%%%%%%%%%%%%%%%%%%%%%%%%%%%
%%%%%%%%%%%%%%%%%%%%%%%%%%%
\begin{theo} 
\label{theo:MaxAmpFormula}
If System \eqref{eqn:ODE} has a reactive attractor at the origin, then the maximal amplification, $\rho_{max}$, admitted by the system is determined by the eigenvalues and orthovalues of $A$ as follows:
\begin{equation}
\label{eqn:rho-max-lambdas}
\rho_{max} = \left( 
\left(
\frac{\lambda_1\mu_2+\lambda_2\mu_1}{\lambda_1\mu_1+\lambda_2\mu_2}
\right)^
{\left(
\frac{\lambda_1+\lambda_2}{\lambda_1-\lambda_2}
\right)}
{\left(
\frac{\mu_1}{\mu_2}
\right)}
\right)^{\frac{1}{2}}
\end{equation}
which can also be written as
\begin{equation}\label{eqn:rho-max-ms}
\rho_{max}  = \left(
\left(\frac{m_Rm_T-p_Rp_T}{m_Rm_T+p_Rp_T}\right)^{
            \left(\frac{m_R}{p_R}\right)}
\left(\frac{m_T+p_T}{m_T-p_T}\right)\right)^{\frac{1}{2}}.
\end{equation}
In the case when $A$ has real eigenvalues, Equation \eqref{eqn:rho-max-lambdas} can also be written in terms of the eigenvector and orthovector separations:
\begin{equation}
\label{eqn:rho-max-deltas}
\rho_{max}  = \left( 
\left(
\frac{\cos(2\delta_R+2\delta_T)}{\cos(2\delta_R-2\delta_T)}
\right)^
{
-\left(
\frac{\cos\left(2\delta_R\right)}{\sin\left(2\delta_T\right)}
\right)
}
{\left(
\frac{\cos(2\delta_T)-\sin(2\delta_R)}{\cos(2\delta_T)+\sin(2\delta_R)}
\right)}
\right)^{\frac{1}{2}}.
\end{equation}
\end{theo}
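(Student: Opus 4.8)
The plan is to use the decoupling in Corollary~\ref{cor:RadTangR^2}: along every solution $\frac{d}{dt}\ln r=\Rad(\theta)$ and $\dot\theta=\Tang(\theta)$, so $\ln\big(r(t_*)/r_0\big)=\int_0^{t_*}\Rad(\theta(t))\,dt$. Arguing exactly as in the proof of Theorem~\ref{theo:MaxAmpUpperBound}, a reactive attractor has no eigendirection inside $\Rreg$, so $\Tang$ is single-signed on each component of $\Rreg$ and the angular dynamics there is strictly monotone; since the origin attracts, the radial growth along any trajectory is confined to at most one pass through a single component of $\Rreg$, and the maximal amplification is achieved by the trajectory that enters such a component at one orthovector and exits at the other. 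Changing the variable of integration from $t$ to $\theta$ via $dt=d\theta/\Tang(\theta)$ gives
\begin{equation*}
\ln\rho_{max}=\int_{\phi_1}^{\phi_2}\frac{\Rad(\theta)}{\Tang(\theta)}\,d\theta,
\end{equation*}
the endpoints being the two orthovector angles bounding that component, ordered to match the direction of motion.

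By Corollary~\ref{cor:conj-invariance} every quantity in the statement is invariant under conjugation by rotation, and conjugating by the reflection $(x_1,x_2)\mapsto(x_1,-x_2)$ --- which preserves $\|X\|$, hence $\rho_{max}$, while sending $m_T\mapsto-m_T$ --- lets us assume in addition that $m_T>0$, so that $\Tang>0$ on $\Rreg$ and $\mu_1>\mu_2>0$. By Theorem~\ref{theo:std_forms} put $A$ in $\Rad$-centered form; then $\phi_1=-\delta_R$, $\phi_2=\delta_R$, $\Rad(\theta)=m_R+p\cos 2\theta$, and $\Tang(\theta)=m_T-p\sin 2\theta$. Substituting $u=2\theta$,
\begin{equation*}
\ln\rho_{max}=\frac12\int_{-2\delta_R}^{2\delta_R}\frac{p\cos u}{m_T-p\sin u}\,du+\frac{m_R}{2}\int_{-2\delta_R}^{2\delta_R}\frac{du}{m_T-p\sin u}.
\end{equation*}
The first integral has antiderivative $-\ln|m_T-p\sin u|$, so by Corollary~\ref{cor:delta_R_T} (using $p\sin 2\delta_R=p_T$) it equals $\ln\frac{m_T+p_T}{m_T-p_T}=\ln(\mu_1/\mu_2)$. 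The second is the standard integral $\int du/(m_T-p\sin u)$; when $A$ has real eigenvalues $p>|m_T|$, so the Weierstrass substitution $t=\tan(u/2)$ produces a logarithm, which simplifies, via the relations $\cos 2\delta_R=-m_R/p$ and $\sin 2\delta_R=p_T/p$ of Corollary~\ref{cor:delta_R_T}, to $\frac{1}{p_R}\ln\frac{m_Rm_T-p_Rp_T}{m_Rm_T+p_Rp_T}$. Hence
\begin{equation*}
\ln\rho_{max}=\frac12\left(\ln\frac{m_T+p_T}{m_T-p_T}+\frac{m_R}{p_R}\ln\frac{m_Rm_T-p_Rp_T}{m_Rm_T+p_Rp_T}\right),
\end{equation*}
and exponentiating is Equation~\eqref{eqn:rho-max-ms}; the hypotheses $m_R<0<m_R+p$ and $p^2<m_R^2+m_T^2$ (the latter equivalent to $\lambda_1<0$) ensure every logarithm argument above is positive.

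The other two forms are algebraic rewritings of Equation~\eqref{eqn:rho-max-ms}. For Equation~\eqref{eqn:rho-max-lambdas}, substitute $m_R=\frac12(\lambda_1+\lambda_2)$, $p_R=\frac12(\lambda_1-\lambda_2)$, $m_T=\frac12(\mu_1+\mu_2)$, $p_T=\frac12(\mu_1-\mu_2)$; then $m_Rm_T-p_Rp_T=\frac12(\lambda_1\mu_2+\lambda_2\mu_1)$, $m_Rm_T+p_Rp_T=\frac12(\lambda_1\mu_1+\lambda_2\mu_2)$, the exponent $m_R/p_R$ becomes $(\lambda_1+\lambda_2)/(\lambda_1-\lambda_2)$, and $\frac{m_T+p_T}{m_T-p_T}$ becomes $\mu_1/\mu_2$. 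For Equation~\eqref{eqn:rho-max-deltas}, valid when the eigenvalues are real, substitute instead $m_R=-p\cos 2\delta_R$, $p_T=p\sin 2\delta_R$, $m_T=-p\cos 2\delta_T$, $p_R=p\sin 2\delta_T$ from Corollary~\ref{cor:delta_R_T}: the products $m_Rm_T\mp p_Rp_T$ collapse by angle addition to $p^2\cos(2\delta_R\pm 2\delta_T)$, and the factors of $p$ cancel.

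The main obstacle is not the integration itself but the sign bookkeeping that accompanies it. One must track precisely the orientation of the bounds $\phi_1,\phi_2$ and the sign of $\Tang$ on $\Rreg$ so that the closed form emerges with the orthovalue labelling of Table~\ref{tab:notation}; and one must treat the complex-eigenvalue case, where $p<|m_T|$ and the same integral $\int du/(m_T-p\sin u)$ evaluates to an arctangent. In that case one sets $p_R=i\sqrt{m_T^2-p^2}$ and verifies, using the identity expressing $\arctan$ through the complex logarithm, that the arctangent expression is exactly the (real) analytic continuation of $\frac{1}{p_R}\ln\frac{m_Rm_T-p_Rp_T}{m_Rm_T+p_Rp_T}$, so that Equations~\eqref{eqn:rho-max-lambdas} and~\eqref{eqn:rho-max-ms} still hold verbatim, while Equation~\eqref{eqn:rho-max-deltas}, which presupposes real $\delta_T$, does not apply. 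The limiting case of repeated eigenvalues ($p_R\to0$, an indeterminate $1^{\infty}$) is then handled by continuity.
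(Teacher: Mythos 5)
The paper does not actually prove Theorem \ref{theo:MaxAmpFormula}: it states the result and defers the derivation to ``an upcoming paper,'' describing the method only as integrating $\Rad(\theta(t))$ with respect to time as a trajectory traverses the reactive region. Your proposal implements exactly that method --- $\ln(r/r_0)=\int\Rad\,dt$, reduction to a single monotone pass through one component of $\Rreg$ (the same reduction the paper uses in proving Theorem \ref{theo:MaxAmpUpperBound}), the change of variables $dt=d\theta/\Tang(\theta)$, and evaluation in $\Rad$-centered form --- and the two elementary integrals you obtain do evaluate as you claim: the first to $\ln(\mu_1/\mu_2)$ via $p\sin(2\delta_R)=p_T$, the second to $\tfrac{1}{p_R}\ln\tfrac{m_Rm_T-p_Rp_T}{m_Rm_T+p_Rp_T}$, whose arguments are positive exactly under your stated hypotheses. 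The three algebraic rewritings via $m_R=\tfrac12(\lambda_1+\lambda_2)$, $p_R=\tfrac12(\lambda_1-\lambda_2)$, etc., and via Corollary \ref{cor:delta_R_T} are correct, and the resulting value agrees numerically with the $\rho_{\max}\approx 1.67$ reported for the matrix of Figure \ref{fig:bullseye}. So the skeleton is right and is the intended one.

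There is, however, one genuine gap created by your own normalization step. The reflection $(x_1,x_2)\mapsto(x_1,-x_2)$ does preserve $\rho_{\max}$ and does send $m_T\mapsto -m_T$ while fixing $m_R,p,p_R,p_T$ --- but the right-hand sides of Equations \eqref{eqn:rho-max-lambdas} and \eqref{eqn:rho-max-ms} are \emph{not} invariant under this substitution: with the paper's convention $\mu_1=m_T+p_T\ge\mu_2=m_T-p_T$, both $\frac{m_Rm_T-p_Rp_T}{m_Rm_T+p_Rp_T}$ and $\frac{m_T+p_T}{m_T-p_T}$ become their reciprocals, so the whole expression becomes $1/\rho_{\max}$ (e.g.\ for $\left(\begin{smallmatrix}-1&8\\0&-3\end{smallmatrix}\right)$ the stated formula returns $\approx 0.60$, not $\approx 1.66$). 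Hence you cannot ``assume $m_T>0$ without loss of generality'' \emph{and} conclude the formula verbatim; what you have actually proved is the formula under the normalization $m_T>0$ (equivalently $\Tang>0$ on $\Rreg$), and you should either state that hypothesis, or re-derive the $m_T<0$ case directly (integrating from $\phi_2$ down to $\phi_1$) and record that the literal formula then requires swapping the roles of the orthovalues. This is arguably a defect of the theorem statement as much as of your proof, but a complete argument must confront it. Finally, the complex-eigenvalue case is only sketched: the analytic-continuation identity relating the arctangent evaluation to $\tfrac{1}{p_R}\ln(\cdot)$ with $p_R$ imaginary is plausible and standard, but it should be written out (and the repeated-eigenvalue limit $p_R\to 0$ checked by l'H\^opital) before the theorem can be considered fully proved in all cases.
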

%%%%%%%%%%%%%%%%%%%%%%%%%%%
\par
%%%%%%%%%%%%%%%%%%%%%%%%%%%
\begin{Remark}
Since Equation \eqref{eqn:rho-max-deltas} depends only on the zeroes of $\Rad$ and $\Tang$, it reveals that the maximal amplification is independent of the amplitude $p$ of $\Rad$ and $\Tang$. 
Intuitively, in the case of a reactive attractor, even if $p$ is large, making the instantaneous reactivity of System \eqref{eqn:ODE} large, 
this is offset by a correspondingly high speed at which trajectories whip through the reactive region.
%%%%%%%%%%%%%%%%%%%%%%%%%%%
Indeed, $p$ controls the time, $t_{max}$, 
taken to reach maximal amplification: If $\Rad$ and $\Tang$ are scaled by $\varepsilon$, then $p$ is scaled by $\varepsilon$, while $\delta_R$ and $\delta_T$ are unchanged. But scaling $\Rad$ and $\Tang$ by $\varepsilon$ is equivalent to scaling $A$ by $\varepsilon$ (Proposition \ref{prop:RadTang-to-A}), which scales the speed of trajectories by $\varepsilon$ and, in turn, scales $t_{max}$ by $1/\varepsilon$. 
\end{Remark}
%%%%%%%%%%%%%%%%%%%%%%%%%%%
\par \medskip
%%%%%%%%%%%%%%%%%%%%%%%%%%%
Taken together, Theorems \ref{theo:MaxAmpUpperBound} and \ref{theo:MaxAmpFormula} show that linear effects alone can lead to transients of any size, that can last for any length of time. 
But the same theorems also show us how to bound and calculate the size of the transient for a given system.
%%%%%%%%%%%%%%%%%%%%%%%%%%%
%%%%%%%%%%%%%%%%%%%%%%%%%%%
\section{Surfing the reactivity in nonautonomous systems}\label{sec:nonaut}
%%%%%%%%%%%%%%%%%%%%%%%%%%%
%%%%%%%%%%%%%%%%%%%%%%%%%%%
In this final section, we illustrate how the framework we have developed can clarify the analysis of a well-known but counter-intuitive effect of transient reactivity. 
Namely, that in a nonautonomous linear system, even if every frozen-time system has a global attractor at the origin, the accumulation of radial growth can be unbounded, yielding an unstable equilibrium at the origin. 
See, for example, \cite{josic2008unstable, lawley2014sensitivity, mierczynski2017instability} and the references therein.
%%%%%%%%%%%%%%%%%%%%%%%%%%%
This phenomenon plays an important role when using the variational equation to determine the stability of a periodic orbit of a nonlinear autonomous ODE \cite{guckenheimer1983nonlinear, hirsch2012differential}, or of an equilibrium of a flow-kick system \cite{meyer2018quantifying}.
%%%%%%%%%%%%%%%%%%%%%%%%%%%
\par 
%%%%%%%%%%%%%%%%%%%%%%%%%%%
We will focus on the family of nonautonomous linear systems generated by conjugating System \eqref{eqn:ODE} with time-dependent rotation, as in the elegant survey by Josi\'{c} and Rosenbaum \cite{josic2008unstable}.
%%%%%%%%%%%%%%%%%%%%%%%%%%
To fix ideas, let $A$ be a $2\times 2$ matrix for which System \eqref{eqn:ODE} has a reactive attractor, and consider the nonautonomous linear system: 
\begin{equation}
\label{eq:non-aut}
    \frac{dX}{dt} = B_k(t)X = M_{kt}^{-1} A M_{kt} X,
\end{equation}
where $M_{kt}$ is the rotation matrix defined in Notation \ref{notn:rotation-matrix}. Note that the conjugation by $M_{kt}$ has the effect of nonautonomously rotating the vector field generated by $A$ clockwise around the origin, with fixed angular velocity $k$.
%%%%%%%%%%%%%%%%%%%%%%%%%%%
\par 
%%%%%%%%%%%%%%%%%%%%%%%%%%%
The following theorem recasts a result of Josi\'{c} and Rosenbaum \cite[Theorem 3.1]{josic2008unstable} in terms of the orthostructure of $A$.\footnote{To help compare directly with the notation of \cite[Theorem 3.1]{josic2008unstable}, note that the angular velocity $\omega$ in \cite{josic2008unstable} corresponds to $-k$ here (with the sign difference arising from the slight difference in how we conjugate), and the interval $I$  in \cite{josic2008unstable} corresponds to $(\mu_2,\mu_1)$. The orthostructure makes it clear that, when System \eqref{eqn:ODE} has an attractor, $I$ is real and non-empty if and only if the system is reactive. By Theorem \ref{theo:orthostructure}, the value of $D$ in \cite{josic2008unstable} is $m_T$, and the length of $I$ is given by $\mu_1-\mu_2 = 2\sqrt{-\rho_1\rho_2}$.} 
%%%%%%%%%%%%%%%%%%%%%%%%%%%
%%%%%%%%%%%%%%%%%%%%%%%%%%%
\begin{theorem}
\label{theo:non-aut_result}
Let $A$ be a real $2\times2$ matrix. If the autonomous System \eqref{eqn:ODE} has a reactive attractor with orthovalues $\mu_1 > \mu_2$, then the nonautonomous System \eqref{eq:non-aut} is asymptotically unstable if and only if $-k\in (\mu_2,\mu_1)$.
\end{theorem}
%%%%%%%%%%%%%%%%%%%%%%%%%%%
\par 
%%%%%%%%%%%%%%%%%%%%%%%%%%%
Before proving Theorem \ref{theo:non-aut_result}, let's build some intuition. For each fixed $s \in \R$, the frozen coefficient matrix $B_k(s)$ is a conjugation by rotation of $A$, and hence shares the same reactivity, $\rho_1$, and reactive radius, $\delta_R$, as $A$ (Corollary \ref{cor:conj-shift}). Thus, at each frozen-time instant in the nonautonomous dynamics there is a reactive region in which solutions move away from the origin. The intuition behind Theorem \ref{theo:non-aut_result} is that if our nonautonomous rate of rotation, $k$, is compatible with the angular velocity of the autonomous System \eqref{eqn:ODE} in the reactive region, then the nonautonomous rotation can exactly balance that angular velocity, thereby trapping solutions in the reactive region forever. These trapped solutions keep `surfing' the reactivity to infinity.
%%%%%%%%%%%%%%%%%%%%%%%%%%%
\par 
%%%%%%%%%%%%%%%%%%%%%%%%%%%
To cement this intuition, we first prove Lemma \ref{lem:add_RadTang} showing that $\Rad$ and $\Tang$ respect matrix addition. 
Next, to prove Theorem \ref{theo:non-aut_result}, we move from an inertial frame of reference to a co-rotating frame of reference, in which the the nonautonomous system can be represented by the sum of two \textit{autonomous} linear systems: one representing the original matrix $A$, and the other representing the conjugation by rotation. We can then apply Lemma \ref{lem:add_RadTang} and use the autonomous $\Rad$ and $\Tang$ approach to analyze the balance between the angular velocity, $\Tang_A$, within the reactive region of System \eqref{eqn:ODE}, and the nonautonomous rotation rate, $k$, of System \eqref{eq:non-aut}. See Figure \ref{fig:nonaut_escape}.
%%%%%%%%%%%%%%%%%%%%%%%%%%%
%%%%%%%%%%%%%%%%%%%%%%%%%%%
\begin{figure}[t!]
	\centering
	\includegraphics[width=\linewidth]{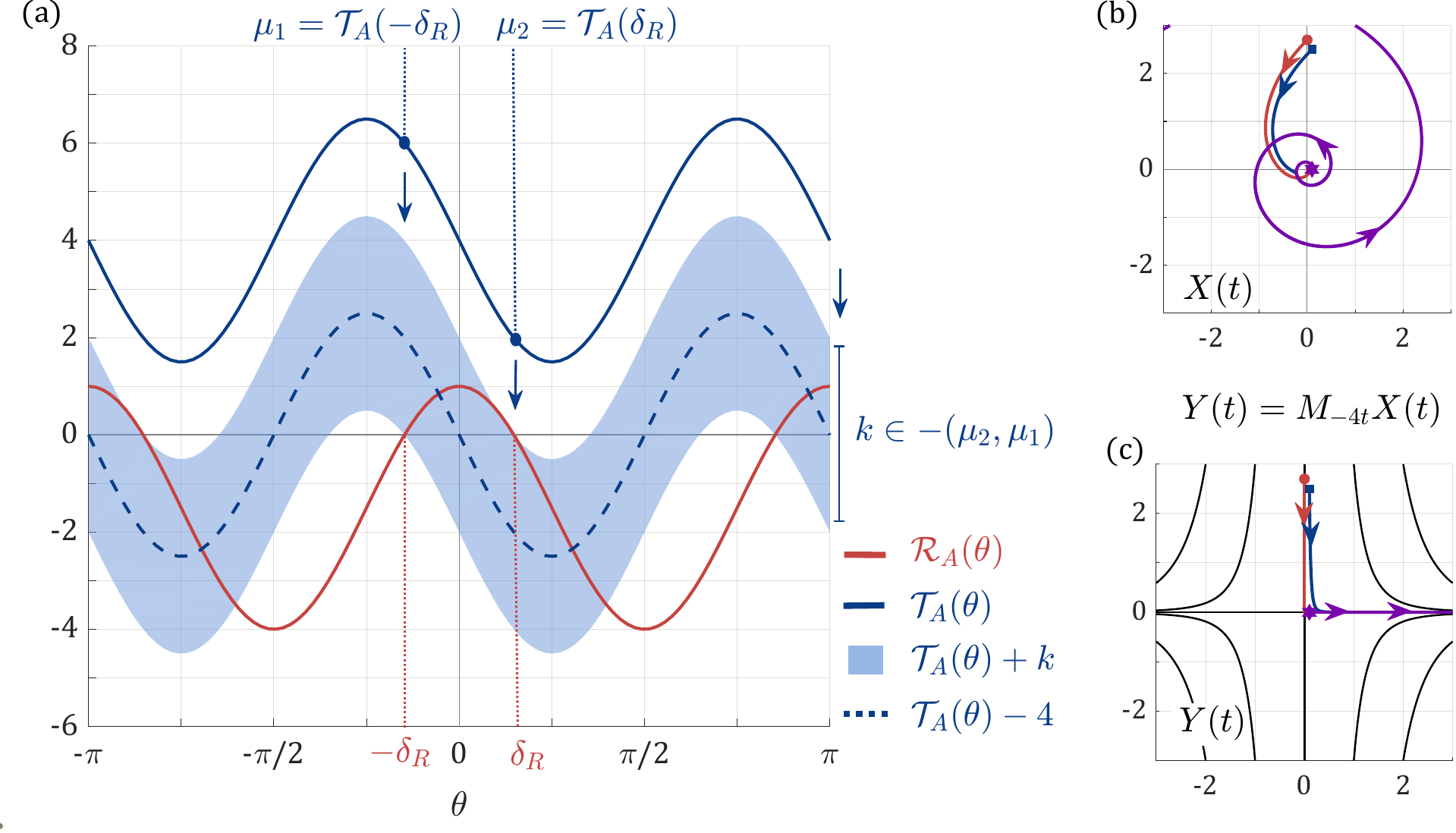}
	\caption{
    Visual representation of Theorem \ref{theo:non-aut_result}, using $A=\left(\begin{smallmatrix} 1 & -4\\ 4 & -4 \end{smallmatrix}\right)$.
		\textbf{(a)} The radial and tangential functions $\Rad_A$ (solid red) and $\Tang_A$ (solid blue) of the autonomous System \eqref{eqn:ODE}. The orthovector angles (zeroes of $\Rad_A$) are at $\pm \delta_R$, and the orthovalues $\mu_1=\Tang_A(-\delta_R)=6$ and $\mu_2=\Tang_A(\delta_R)=2$ are marked on $\Tang_A$. The dashed blue curve shows $\Tang_A$ shifted by $k=-4$.
        If $\Tang_A$ is shifted vertically by any $k\in -(\mu_1,\mu_2) = (-6,-2)$, it will land in the blue shaded region, pushing one of its roots into the reactive set $(-\delta_R, \delta_R)$ where $\Rad>0$. This means $C=A+kJ$ will have a real positive eigenvalue, leading to asymptotic instability (a saddle) in the autonomous System \eqref{eq:co-rot}, and hence in the nonautonomous System \eqref{eq:non-aut}.  
        If $\Tang_A$ is shifted vertically by $k \not \in -(\mu_1,\mu_2)$, then $C=A+kJ$ will not have a positive eigenvalue, and Systems \eqref{eq:co-rot} and \eqref{eq:non-aut} will be stable.
        \textbf{(b)} Three trajectories, $X(t)$, of System \eqref{eq:non-aut} with $k=-4$, in a fixed coordinate frame.
		\textbf{(c)} The corresponding trajectories $Y(t)=M_{-4t}X(t)$ in the co-rotating coordinates of System \eqref{eq:co-rot}, where the unstable saddle behavior is more familiar.}
	\label{fig:nonaut_escape}
\end{figure} 
%%%%%%%%%%%%%%%%%%%%%%%%%%%
%%%%%%%%%%%%%%%%%%%%%%%%%%%
\begin{lemma}
\label{lem:add_RadTang}
Let $A$ and $B$ be two real $2\times2$ matrices. Then
\begin{align}
\Rad_{A+B}(\theta) &= \Rad_A(\theta) + \Rad_B(\theta),\quad  \mbox{and} \nonumber \\
\Tang_{A+B}(\theta) &= \Tang_A(\theta) + \Tang_B(\theta).\nonumber
\end{align}
\end{lemma}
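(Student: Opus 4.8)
The plan is to read off the lemma directly from Theorem \ref{theo:RadTang}: the radial and tangential components of $A$ are exactly the coordinates of the vector field in the basis $\{X,X^\perp\}$, and those coordinates depend \emph{linearly} on the matrix. Fix $\theta\in\R\Mod{2\pi}$ and put $X=\begin{pmatrix}\cos\theta\\\sin\theta\end{pmatrix}\in S^1$, so $X^\perp=JX$ as in Equation \eqref{eqn:J}. Since $X$ is nonzero and orthogonal to $X^\perp$, the pair $\{X,X^\perp\}$ is a basis of $\R^2$; this is the one point that makes the argument work, and it is the closest thing here to an obstacle — there is otherwise no difficulty.

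First I would apply Theorem \ref{theo:RadTang} to $A$ and to $B$ separately, obtaining $AX=\Rad_A(\theta)X+\Tang_A(\theta)X^\perp$ and $BX=\Rad_B(\theta)X+\Tang_B(\theta)X^\perp$. Adding these and using $(A+B)X=AX+BX$ gives
\[(A+B)X=\big(\Rad_A(\theta)+\Rad_B(\theta)\big)X+\big(\Tang_A(\theta)+\Tang_B(\theta)\big)X^\perp.\]
Second, I would apply Theorem \ref{theo:RadTang} to the single matrix $A+B$, which gives $(A+B)X=\Rad_{A+B}(\theta)X+\Tang_{A+B}(\theta)X^\perp$. Comparing the two expressions for $(A+B)X$ and invoking uniqueness of the coordinates in the basis $\{X,X^\perp\}$ yields $\Rad_{A+B}(\theta)=\Rad_A(\theta)+\Rad_B(\theta)$ and $\Tang_{A+B}(\theta)=\Tang_A(\theta)+\Tang_B(\theta)$, as claimed. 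Equivalently, one can skip the uniqueness step by projecting: dotting $(A+B)X$ with $X$ and with $X^\perp$ recovers $\Rad_{A+B}(\theta)$ and $\Tang_{A+B}(\theta)$ through the formulas derived in the proof of Theorem \ref{theo:RadTang}, and bilinearity of the dot product distributes these over the sum $A+B$.

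As an alternative, purely computational route, I would note that the lemma is visible at the level of the parameters of Equations \eqref{eqn:m_R}--\eqref{eqn:theta_R}: the midlines $m_R=\tfrac12(a_{11}+a_{22})$ and $m_T=\tfrac12(a_{21}-a_{12})$ are linear in the matrix entries, and so are the combinations $p\cos(2\theta_R)=\tfrac12(a_{11}-a_{22})$ and $p\sin(2\theta_R)=\tfrac12(a_{12}+a_{21})$ read off from the proof of Theorem \ref{theo:RadTang}. Since $\Rad(\theta)=m_R+\cos(2\theta)\,[p\cos 2\theta_R]+\sin(2\theta)\,[p\sin 2\theta_R]$ and $\Tang(\theta)=m_T+\cos(2\theta)\,[p\sin 2\theta_R]-\sin(2\theta)\,[p\cos 2\theta_R]$, additivity under $A\mapsto A+B$ is immediate. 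I would present the short basis-uniqueness argument as the proof and, if desired, mention the parameter computation in a remark; either way the verification is routine once Theorem \ref{theo:RadTang} is in hand.
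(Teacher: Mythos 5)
Your proposal is correct and follows essentially the same route as the paper's proof: apply Theorem \ref{theo:RadTang} to $A$, $B$, and $A+B$, add the first two decompositions, and equate coefficients in the basis $\{X, X^\perp\}$. Your explicit appeal to uniqueness of coordinates in that basis is a point the paper leaves implicit, but the argument is the same.
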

%%%%%%%%%%%%%%%%%%%%%%%%%%%
%%%%%%%%%%%%%%%%%%%%%%%%%%%
\begin{proof}
By Theorem \ref{theo:RadTang}, 
\begin{align*}
 \Rad_{A+B}(\theta)X +\Tang_{A+B}(\theta)X^\perp
 & = (A+B)X \\
 & = AX+BX \\
 & = \Rad_A(\theta)X+ \Tang_A(\theta)X^\perp + \Rad_B(\theta)X +\Tang_B(\theta)X^\perp\\
 &= (\Rad_A(\theta) + \Rad_B(\theta))X + (\Tang_A(\theta) + \Tang_B(\theta))X^\perp
 \end{align*}
and hence $\Rad_{A+B}(\theta) = \Rad_A(\theta) + \Rad_B(\theta)$ and $\Tang_{A+B}(\theta) = \Tang_A(\theta) + \Tang_B(\theta)$.
\end{proof} 
%%%%%%%%%%%%%%%%%%%%%%%%%%%
\begin{proof}[Proof of Theorem \ref{theo:non-aut_result}]
Since the nonautonomous conjugation rotates 
the vector field clockwise at rate $k$, we move into a co-rotating frame of reference by choosing new coordinates that rotate at the same rate: $Y = M_{kt} X$. Then
%%%%%%%%%%%%%%%%%%%%%%%%%%%
\begin{align*}
    \frac{dY}{dt} &= \frac{d}{dt}\big[M_{kt}X\big] 
    = M_{kt}\frac{dX}{dt} + \frac{d}{dt}\big[M_{kt}\big]X \\
    &=AM_{kt}X + 
    \begin{pmatrix}
        0 & -k\\k&0
    \end{pmatrix} M_{kt}X\\
    &= (A + kJ )Y. 
 \end{align*}
Thus, in the co-rotating coordinate frame the nonautonomous system is represented by an \textit{autonomous} linear system:
 \begin{align}
   \frac{dY}{dt} = CY, \ \mbox{ where }  C=A+kJ. \label{eq:co-rot}
\end{align}
%%%%%%%%%%%%%%%%%%%%%%%%%%%
By Lemma \ref{lem:add_RadTang}, the radial and tangential functions of System \eqref{eq:co-rot} 
are given by 
\begin{align}
\nonumber
\Rad_C(\theta) & = \Rad_A(\theta) + \Rad_{kJ}(\theta) = \Rad_A(\theta), \ \text{and} \\
\Tang_C(\theta) & = \Tang_A(\theta) + \Tang_{kJ}(\theta) = \Tang_A(\theta) + k,
\label{eqn:T_C}
\end{align}
since $\Rad_{kJ}(\theta) \equiv 0$ and $\Tang_{kJ}(\theta) \equiv k$. Thus, in the new co-rotating coordinates, the  nonautonomous rotation imposed by conjugation by $M_{kt}$ in System \eqref{eq:non-aut} manifests as a vertical translation of $\Tang_A$. See Figure \ref{fig:nonaut_escape}a.
%%%%%%%%%%%%%%%%%%%%%%%%%%%
\par
%%%%%%%%%%%%%%%%%%%%%%%%%%%
The asymptotic radial dynamics of the nonautonomous System \eqref{eq:non-aut} are equivalent to those of the autonomous System 
\eqref{eq:co-rot} via the change of coordinates $X(t) = M_{kt}^{-1}Y(t)$ since $\|X(t)\| = \|Y(t)\|$. 
Thus, to complete the proof, we can simply determine when the autonomous dynamics of System \eqref{eq:co-rot} are asymptotically unstable.
%%%%%%%%%%%%%%%%%%%%%%%%%%%
\par
%%%%%%%%%%%%%%%%%%%%%%%%%%%
Since $\Rad_C=\Rad_A$, System \eqref{eq:co-rot} has the same reactive set $\Rset$, midline $m_R<0$, and reactivity $\rho>0$ as System \eqref{eqn:ODE}. Thus, under System \eqref{eq:co-rot}, the origin is generically either a reactive attractor or a saddle (and therefore unstable), depending on $k$.
In particular, it is a saddle if and only if $C$ has a positive real eigenvalue.
%%%%%%%%%%%%%%%%%%%%%%%%%%%
\par
%%%%%%%%%%%%%%%%%%%%%%%%%%%
Let $\lambda_1$ denote the largest eigenvalue of $C$, and $\theta_1$ denote the corresponding eigenvector angle. Then,  applying Theorem \ref{theo:eigenstructure}, we have
\begin{align*}
\lambda_1>0 & \iff \theta_1 \in \Rset \\
    & \iff \Tang_C(\theta) = 0 \mbox{ for some } \theta \in \Rset\\
    & \iff \Tang_A(\theta) = -k \mbox{ for some } \theta \in \Rset,  \mbox{ by Equation \eqref{eqn:T_C}}\\
    & \iff -k \in \Tang_A(\Rset)\\
    & \iff -k \in (\mu_2,\mu_1), \mbox{ by Theorem \ref{theo:orthostructure} applied to $A$.}
\end{align*}
\end{proof}
%%%%%%%%%%%%%%%%%%%%%%%%%%%
\par
%%%%%%%%%%%%%%%%%%%%%%%%%%%
The example illustrated in Figure \ref{fig:nonaut_escape}b shows a saddle of the nonautonomously rotating System \eqref{eq:non-aut} in a fixed coordinate frame, while \ref{fig:nonaut_escape}c shows the same saddle in the co-rotating coordinates, looking more familiar since it solves the autonomous System \eqref{eq:co-rot}. 
%%%%%%%%%%%%%%%%%%%%%%%%%%%%%%%%%%%%%%%%%%%%%%%%%%%%%%
%%%%%%%%%%%%%%%%%%%%%%%%%%%
\par \medskip
%%%%%%%%%%%%%%%%%%%%%%%%%%%
{\bf Acknowledgement.}
We thank Margaret Beck for helpful conversations. We are also grateful for support from Lenfest Grants (Washington and Lee University, JB), Clare Booth Luce Program (AH-H) the Institute for Computational Sustainability (NSF Award CCF-1522054), the Center for Analysis and Prediction of Pandemic Expansion (NSF Award DBI-2412115), 
and the American Institute of Mathematics.
%%%%%%%%%%%%%%%%%%%%%%%%%%%%%%%%%%%%
\bibliographystyle{siamplain}
\bibliography{references}
%%%%%%%%%%%%%%%%%%%%%%%%%%%%%%%%%%%%
\end{document}